\newtheorem{thm}{Theorem}
\newtheorem{prop}[thm]{Proposition}
\newtheorem{remark}[thm]{Remark}
\newcommand{\tr}{\mathrm{tr}}        %trace
\newcommand{\rank}{\mathrm{rank}}        %rank
\newcommand{\sign}{\mathrm{sgn}}        %signum
\newcommand{\dd}{:} % matrix dot product
\newcommand{\tp}{^{T}} % \dag % matrix or vector transpose
\newcommand{\dij}{\delta_{ij}}
\newcommand{\vu}{\mathbf{u}}
\newcommand{\vv}{\mathbf{v}}
\newcommand{\vw}{\mathbf{w}}
\newcommand{\vQ}{\mathbf{Q}}
\newcommand{\vm}{\mathbf{m}}
\newcommand{\vn}{\mathbf{n}}
\newcommand{\vt}{\mathbf{t}}
\newcommand{\vx}{\mathbf{x}}
\newcommand{\vy}{\mathbf{y}}
\newcommand{\vnu}{\bm{\nu}}
\newcommand{\vPi}{\mathbf{\Pi}}
\newcommand{\vA}{\mathbf{A}}
\newcommand{\vB}{\mathbf{B}}
\newcommand{\vU}{\mathbf{U}}
\newcommand{\vW}{\mathbf{W}}
\newcommand{\vV}{\mathbf{V}}
\newcommand{\vwU}{\widetilde{\mathbf{U}}}
\newcommand{\vM}{\mathbf{M}}
\newcommand{\vN}{\mathbf{N}}
\newcommand{\vP}{\mathbf{P}}
\newcommand{\vI}{\mathbf{I}}
\newcommand{\vR}{\mathbf{R}}
\newcommand{\vT}{\mathbf{T}}
\newcommand{\vF}{\mathbf{F}}
\newcommand{\vzero}{\mathbf{0}}
\newcommand{\Om}{\Omega}
\newcommand{\dOm}{\partial \Omega}
\newcommand{\Gm}{\Gamma}
\newcommand{\iO}{\int_{\Omega}}
\newcommand{\X}{\mathbb{X}}
\newcommand{\Sh}{\mathbb{S}_h}   % another space
\newcommand{\Nh}{\mathbb{N}_h}   % another space
\newcommand{\THh}{\mathbb{T}_h}   % another space
\newcommand{\Uh}{\mathbb{U}_h}   % another space
\newcommand{\Sp}{\mathbb{S}}  % unit sphere space
\newcommand{\V}{\mathbb{N}^{\perp}}   % space
\newcommand{\Vh}{\V_h}   % space
\newcommand{\LL}{\mathbb{L}}  % 2nd order tensor space
\newcommand{\R}{\mathbb{R}}   % real numbers
\newcommand{\Sing}{\mathbb{S}}   % the admissible class
\newcommand{\eps}{\varepsilon}
\newcommand{\Tk}{\mathcal{T}}
\newcommand{\Nk}{\mathcal{N}}
\newcommand{\ELdG}{E_{\mathrm{LdG}}}
\newcommand{\ELdGone}{E_{\mathrm{LdG},\mathrm{one}}}
\newcommand{\ELdGfunc}{\mathcal{W}_{\mathrm{LdG}}}
\newcommand{\Li}{L}
\newcommand{\Ebulk}{E_{\mathrm{LdG},\mathrm{bulk}}}
\newcommand{\Bulkfunc}{\phi_{\mathrm{LdG}}}
\newcommand{\BulkLdG}{\psi_{\mathrm{LdG}}}
\newcommand{\Bulkcoef}{\eta_{\mathrm{B}}}
\newcommand{\BulkA}{A}
\newcommand{\BulkB}{B}
\newcommand{\BulkC}{C}
\newcommand{\BulkK}{K}
\newcommand{\consistLdG}{\mathcal{E}} % consistency term
\newcommand{\consistULdG}{\widetilde{\mathcal{E}}} % consistency term
\newcommand{\vNN}{{\bf\Theta}}
\newcommand{\vtNN}{\widetilde{\bf\Theta}}
\newcommand{\vttNN}{\widehat{\bf\Theta}}   %{\widetilde{\widetilde{\bf\Theta}}}
\newcommand{\Euni}{E_{\mathrm{uni}}}
\newcommand{\Euniring}{{E}_{\mathrm{uni-i}}} % LdG intercation energy
\newcommand{\Eunis}{{E}_{\mathrm{uni-s}}} % LdG orientation energy
\newcommand{\Eunit}{{E}_{\mathrm{uni-t}}} % LdG total energy
\newcommand{\Admisuni}{\mathcal{A}_{\mathrm{uni}}}   % the admissible class
\newcommand{\Eunimain}{E_{\mathrm{uni}-\mathrm{m}}}
\newcommand{\EuniUmain}{\widetilde{E}_{\mathrm{uni}-\mathrm{m}}}
\newcommand{\EuniUtotal}{\widetilde{E}_{\mathrm{uni}-\mathrm{t}}}
\newcommand{\Eerkmain}{E_{\mathrm{erk}-\mathrm{m}}}
\newcommand{\EerkUmain}{\widetilde{E}_{\mathrm{erk}-\mathrm{m}}}
\newcommand{\Eerkone}{E_{\mathrm{erk}}}
\newcommand{\DWfunc}{\phi_{\mathrm{erk}}}
\newtheorem{theorem}{Theorem}
\newtheorem{lemma}{Lemma}
\begin{document}

\title[Structure-preserving FEM for uniaxial liquid crystals]{A structure-preserving FEM for the uniaxially constrained $\vQ$-tensor model of nematic liquid crystals}

\author[J.P.~Borthagaray]{Juan Pablo~Borthagaray}
\address[J.P.~Borthagaray]{Department of Mathematics, University of Maryland, College Park, MD 20742, USA, and Departamento de Matem\'atica y Estad\'istica del Litoral, Universidad de la Rep\'ublica, Salto, Uruguay}
\email{jpborthagaray@unorte.edu.uy}
\thanks{JPB has been supported in part by NSF grant DMS-1411808}

\author[R.H.~Nochetto]{Ricardo H.~Nochetto}
\address[R.H.~Nochetto]{Department of Mathematics and Institute for Physical Science and Technology, University of Maryland, College Park, MD 20742, USA}
\email{rhn@math.umd.edu}
\thanks{RHN has been supported in part by NSF grants DMS-1411808 and DMS-1908267}

\author[Shawn W.~Walker]{Shawn W.~Walker}
\address[Shawn W.~Walker]{Department of Mathematics and Center for Computation and Technology (CCT) Louisiana State University, Baton Rouge, LA 70803}
\email{walker@math.lsu.edu}
\thanks{SWW has been supported in part by NSF grant DMS-1555222 (CAREER)}

\keywords{Liquid crystals, Finite Element Method, Gamma-convergence, Landau - de Gennes, Defects.}

\begin{abstract}
We consider the one-constant Landau - de Gennes model for nematic liquid crystals. The order parameter is a traceless tensor field $\vQ$, which is constrained to be uniaxial: $\vQ = s (\vn\otimes\vn - d^{-1}\vI)$ where $\vn$ is a director field, $s\in\R$ is the degree of orientation, and $d\ge2$ is the dimension. Building on similarities with the one-constant Ericksen energy, we propose a structure-preserving finite element method for the computation of equilibrium configurations. We prove stability and consistency of the method without regularization, and $\Gamma$-convergence of the discrete energies towards the continuous one as the mesh size goes to zero. We design an alternating direction gradient flow algorithm for the solution of the discrete problems, and we show that such a scheme decreases the energy monotonically. Finally, we illustrate the method's capabilities by presenting some numerical simulations in two and three dimensions including non-orientable line fields.
\end{abstract}

\maketitle

%%%%%%%%%%%%%%%%%%%%%%%%%%%%%%%%%%%%%%%%%%%%%%%%%%%%%%%%%%%%%%%%%%%%%%%%%%%%
\section{Introduction}\label{sec:intro}
%%%%%%%%%%%%%%%%%%%%%%%%%%%%%%%%%%%%%%%%%%%%%%%%%%%%%%%%%%%%%%%%%%%%%%%%%%%%

The liquid crystal state of matter is observed in certain materials as a mesophase between the crystalline and the isotropic liquid phases. Such a state may be obtained as a function of temperature between the two latter phases; in this case, these are called thermotropic liquid crystals. Other classes include lyotropic and metallotropic liquid crystals, in which concentration of the liquid-crystal molecules in a solvent or the ratio between organic and inorganic molecules determine the phase transitions, respectively. In this paper, we consider thermotropic liquid crystals \cite{deGennes_book1995}.

The physical state of a material can be described in terms of the translational and rotational motion of its constituent molecules. In a crystalline solid, molecules exhibit both long-range ordering of the positions of the centers and orientation of the molecules. As the substance is heated, the molecules gain kinetic energy and large molecular vibrations usually make these two ordering types disappear at the same temperature. This results in a fluid phase. However, in some materials, that typically consist of either rod-like or disc-like molecules, the long-range orientational ordering survives until a higher temperature than the long-range positional ordering. Such a state of matter is called {\em liquid crystalline}. Moreover, when long-range positional ordering is completely absent, the liquid crystal is regarded as {\em nematic}. 

On average, nematic liquid crystal molecules are aligned with their long axes parallel to each other. At the macroscopic level, this means that there is a preferred direction; often, such a direction is a rotational symmetry axis. In such a case, the nematic liquid crystal phase is {\em uniaxial}. If, in contrast, there is no such rotational symmetry, then the material is in a {\em biaxial} state. 

Depending on the choice of {\em order parameter} (cf. Section \ref{sec:order_param}), several models for nematic liquid crystals have been proposed. Because the vast majority of thermotropic liquid crystals exhibit uniaxial behavior, this is often built into the modeling. If one takes as order parameter the orientation of the molecules $\vn(x) \in \Sp^{d-1}$, for $x\in\Omega\subset\R^d$, then $\vn$ is a harmonic mapping in the domain $\Omega$; numerical methods for this model have been proposed, for example, in \cite{Adler_SJNA2015, Alouges_SJNA1997, Bartels_MC2010b, Cohen_CPC1989, Gartland_SJNA2015, Lin_SJNA1989}. We refer also to \cite{Cruz_JCP2013, Guillen-Gonzalez_M2AN2013,Liu_SJNA2000,Walkington_M2AN2011} for discretizations of liquid crystal flows. It is often the case that liquid crystal configurations display {\em defects}, that is, that the molecular orientation is not continuous in some regions of the material. Harmonic map models do not allow for point defects if $d = 2$ or line defects if $d = 3$, because the energy is singular.

However, if besides the liquid crystal molecule orientation $\vn$ one considers a scalar variable $s(x)$ that represents the degree of alignment that molecules have with respect to $\vn$, then the equilibrium configuration minimizes the Ericksen energy \cite{deGennes_book1995,Ericksen_ARMA1991,Virga_book1994}. Minimizers of such an energy can exhibit nontrivial defects, as the parameter $s$ can relax a large contribution from $|\nabla \vn|$, and wherever the degree of alignment $s$ vanishes, the resulting Euler-Lagrange equation for $\vn$ is degenerate. Finite element methods for the Ericksen model have been used to approximate both equilibrium configurations \cite{NochettoWalker_MRSproc2015,Nochetto_SJNA2017,Nochetto_JCP2018} and dynamics \cite{Barrett_M2AN2006} of the molecular orientation.

If one considers the probability distribution of the liquid crystal molecules orientation and chooses to use its second moments to define an order parameter, then this leads to the Landau - de Gennes model. In such a model, the order parameter is a tensor field $\vQ (x)$ that measures the discrepancy between the probability distribution at $x\in\Omega$ and a uniform distribution on $\Sp^{d-1}$. Numerical methods for the Landau - de Gennes energy are considered in \cite{Bajc_JCP2016,Bartels_bookch2014,Davis_SJNA1998,Gartland_MCLC1991,James_IEEE2006,Schopohl_PRL1987}.

In this work, we shall be concerned with uniaxial nematic liquid crystals in $\R^d$ for $d \ge 2$; we present numerical experiments for $d = 2, 3$. Our goal is to design a finite element method for a uniaxially-constrained $\vQ$-tensor model, and to prove stability and convergence properties.
More precisely, we prove that if the corresponding meshes are weakly acute, then our discrete energy $\Gamma$-converges to the continuous one as the mesh size goes to zero. Our method can handle the degeneracy introduced by a vanishing degree of orientation without any regularization. Moreover, because the $\vQ$-tensor approach incorporates a head-to-tail symmetry into the modeling, our approach is able to capture \emph{non-orientable} equilibrium configurations. 

The paper is organized as follows. In Section \ref{sec:modeling_LCs}, we discuss modeling of the equilibrium states of liquid crystals. We examine the Landau - de Gennes and Ericksen energies, and discuss the capabilities of these models to capture defects. Section \ref{sec:formulation} is devoted to the formulation of the problem we study in this paper. Such a section includes a discussion on previous work for the Ericksen model \cite{Nochetto_SJNA2017}, which is instrumental for our numerical method. We introduce the discrete setting for the uniaxially-constrained Landau - de Gennes energy and prove key energy inequalities in Section \ref{sec:LdG_discretization}. Afterwards, in Section \ref{sec:LdG_Gamma_conv}, we prove the $\Gamma$-convergence of the discrete energies. For the computation of discrete minimizers, in Section \ref{sec:LdG_Grad_Flow} we propose a gradient flow and prove a strictly monotone energy decreasing property. Finally, Section \ref{sec:LdG_experiments} presents numerical experiments for $d=2,3$ illustrating the capabilities of our method.

%%%%%%%%%%%%%%%%%%%%%%%%%%%%%%%%%%%%%%%%%%%%%%%%%%%%%%%%%%%%%%%%%%%%%%%%%%%%
\section{Modeling of nematic liquid crystals}\label{sec:modeling_LCs}
%%%%%%%%%%%%%%%%%%%%%%%%%%%%%%%%%%%%%%%%%%%%%%%%%%%%%%%%%%%%%%%%%%%%%%%%%%%%

We discuss some elementary properties of the so-called $\vQ$-tensors and review three models for the equilibrium states of nematic liquid crystals, which derive from minimizing an energy (see \cite{deGennes_book1995,Virga_book1994,Mottram_arXiv2014} for more details on the modeling of liquid crystals).

%---------------------------------------------------------------------------
\subsection{Order Parameters}\label{sec:order_param}
%---------------------------------------------------------------------------
For a particular material, the transition between phases of different symmetry can be described in terms of an order parameter. Such a parameter represents the extent to which the configuration of the more symmetric phase differs from that of the less symmetric phase.

For the sake of clarity, we fix the dimension to be $d=3$ in the following discussion. To avoid modeling individual liquid crystal molecules, that is very expensive computationally, we pursue a macroscopic description of liquid crystals. Namely, let us describe the orientation of the nematic molecules by a probability distribution in the unit sphere; this gives raise to a tensor field $\vQ : \Omega \to \R^{3\times 3}$, which is required to be symmetric and traceless a.e. \cite{deGennes_book1995,Virga_book1994,Mottram_arXiv2014}.

We can further characterize $\vQ$ by its eigenframe and is often written in the form:
\begin{equation}\label{eqn:Q_matrix_biaxial}
\begin{split}
	\vQ = s_1 (\vn_1 \otimes \vn_1) + s_2 (\vn_2 \otimes \vn_2) - \frac{1}{3} (s_1 + s_2) \vI,
\end{split}
\end{equation}
where $\vn_1$, $\vn_2$ are orthonormal eigenvectors of $\vQ$, with eigenvalues given by
\begin{equation}\label{eqn:Q_matrix_eigenvalues}
\begin{split}
	\lambda_1 = \frac{2 s_1 - s_2}{3}, \quad \lambda_2 = \frac{2 s_2 - s_1}{3}, \quad \lambda_3 = -\frac{s_1 + s_2}{3},
\end{split}
\end{equation}
where $\lambda_3$ corresponds to the eigenvector $\vn_3 \perp \vn_1, \vn_2$. The eigenvalues of $\vQ$ are constrained by
\begin{equation} \label{eqn:restriction_eigenvalues}
-\frac13 \le \lambda_i  \le \frac23, \quad i = 1,2,3.
\end{equation}
 When all eigenvalues are equal, since $\vQ$ is traceless, we must have $\lambda_1 = \lambda_2 = \lambda_3 = 0$ and $s_1 = s_2 = 0$, i.e. the distribution of liquid crystal molecules is isotropic. If two eigenvalues are equal, i.e.
\begin{equation}\label{eqn:Q_matrix_eigenvalues_uniaxial}
\begin{split}
	\lambda_1 &= \lambda_2 \quad \Leftrightarrow \quad s_1 = s_2, \\
	\lambda_1 &= \lambda_3 \quad \Leftrightarrow \quad s_1 = 0, \\
	\lambda_2 &= \lambda_3 \quad \Leftrightarrow \quad s_2 = 0,
\end{split}
\end{equation}
then we encounter a {\it uniaxial} state, in which either molecules prefer to orient in alignment with the simple eigenspace (in case it corresponds to a positive eigenvalue) or perpendicular to it (in case it corresponds to a negative eigenvalue).  If all three eigenvalues are distinct, then the state is called {\it biaxial}. 

\begin{remark}[biaxial nematics]\label{rem:biaxial_vs_uniaxial}
In this work, we regard liquid crystal molecules as elongated rods. Naturally, most liquid crystal molecules do not possess such an axial symmetry. If the molecules resemble a lath more than a rod, it is expected that the energy interaction can be minimized if the molecules are fully aligned; this necessarily involves a certain degree of biaxiality. Roughly, this was the rationale behind the prediction of the biaxial nematic phase by Freiser \cite{Freiser_PRL1970}.

Since that seminal work, empirical evidence of biaxial states in certain lyotropic liquid crystals has been well documented (see \cite{Yu_PRL1980}, for example). Nevertheless, for thermotropic liquid crystals the nematic biaxial phase remained elusive for a long period, and was first reported long after Freiser's original prediction \cite{Madsen_PRL2004,Prasad_JACS2005}. As pointed out by Sonnet and Virga \cite[Section 4.1]{Sonnet_book2012}, 
\begin{quote}
The vast majority of nematic liquid crystals do not, at least in homogeneous equilibrium states, show any sign of biaxiality.
\end{quote}
\end{remark}

We refer to \cite{BorthagarayWalker_sub2019} for further quantitative discussion via computations.
In light of Remark \ref{rem:biaxial_vs_uniaxial}, in Section \ref{sec:formulation} we shall consider a uniaxially-constrained model. More precisely, we assume that $\vQ$ takes the uniaxial state 
\begin{equation}\label{eqn:Q_matrix_uniaxial}
	\vQ = s \left( \vn \otimes \vn - \frac{1}{3} \vI \right),
\end{equation}
where $\vn$ is the main eigenvector with eigenvalue $\lambda = 2s/3$; the other two eigenvalues equal $-s/3$. The scalar field $s$ is called the {\em degree of orientation} of the liquid crystal molecules. Taking into account identities \eqref{eqn:Q_matrix_eigenvalues} and the restrictions \eqref{eqn:restriction_eigenvalues}, it follows that the physically meaningful range is $s \in (-1/2, 1)$. In case $s = 1$, the molecular long axes are in perfect alignment with the direction of $\vn$, whereas $s = -1/2$ represents the state in which all molecules are perpendicular to $\vn$. 

\begin{remark}[problems in $2d$] \label{rmk:QTensor_2d}
The discussion above simplifies considerably when $d=2$. Indeed, since $\vQ$ is symmetric and traceless, it must be uniaxial, and writing it as $\vQ = s \left( \vn \otimes \vn - \frac{1}{2} \vI \right)$, we deduce that its eigenvalues are $\lambda_1 = s/2$, with eigenvector $\vn$, and $\lambda_2 = -\lambda_1$, with eigenvector $\vn^\bot$. Because eigenvalues are constrained to satisfy $\lambda_i \in (-1/2,1/2)$, we deduce that the physically meaningful range is $s \in (-1,1)$. Actually, one can further simplify to $s \in [0,1)$ by noting that a state with director $\vn$ and degree of orientation $s<0$ is equivalent to a state with director $\vm \perp \vn$ and degree of orientation $-s$.
\end{remark}

\begin{remark}[thin films]
For simplicity, in this work we consider $\vQ$ to be a square tensor with the same dimension as the spatial domain. With minor modifications, our approach carries to the case where these dimensions are different, such as three dimensional tensors on thin films.
\end{remark}

%---------------------------------------------------------------------------
\subsection{Continuum Mechanics}\label{sec:continuum_mech}
%---------------------------------------------------------------------------
Given the order parameter $\vQ$, we still need a model to determine its state as a function of space.  For modeling equilibrium states, this amounts to finding minimizers of an energy functional.  A common approach from continuum mechanics \cite{Holzapfel_book2000,Temam_ContMechBook2005,Truesdell_Book1976} is to construct the ``simplest'' functional possible that is quadratic in gradients of the order parameter while obeying standard laws of physics, such as frame indifference and material symmetries. We assume all equations have been non-dimensionalized; see \cite{Gartland_MMA2018} for the case of the Landau - de Gennes model.

%---------------------------------------------------------------------------
\subsubsection{Landau - de Gennes Model}\label{sec:landau-degennes_model}
%---------------------------------------------------------------------------
Using $\vQ$ as the order parameter, we obtain the Landau - de Gennes model, in which the energy is given by \cite{deGennes_book1995,Sonnet_book2012}:
\begin{equation}\label{eqn:Landau-deGennes_energy_example}
\begin{split}
	\ELdG [\vQ] &:= \iO \ELdGfunc (\vQ,\nabla \vQ) \, dx + \frac{1}{\Bulkcoef} \iO \Bulkfunc (\vQ) \, dx, \\
	\ELdGfunc(\vQ,\nabla \vQ) &:= \frac{1}{2} \left( \Li_{1} |\nabla \vQ|^2 + \Li_{2} |\nabla \cdot \vQ|^2 + \Li_{3} (\nabla \vQ)\tp \dd \nabla \vQ \right).
\end{split}
\end{equation}
Above, $\{ \Li_{i} \}_{i=1}^{3}$, $\Bulkcoef$ are material parameters, $\Bulkfunc$ is a bulk (thermotropic) potential and
\begin{equation}\label{eqn:Landau-deGennes_invariants}
\begin{split}
	|\nabla \vQ|^2 := (\partial_{k} Q_{ij}) (\partial_{k} Q_{ij}), \quad |\nabla \cdot \vQ|^2 := (\partial_{j} Q_{ij})^2, \quad
	(\nabla \vQ)\tp \dd \nabla \vQ := (\partial_{j} Q_{ik}) (\partial_{k} Q_{ij}),
\end{split}
\end{equation}
and we use the convention of summation over repeated indices.
This is a relatively simple form for $\ELdGfunc$; more complicated models can also be considered \cite{Mottram_arXiv2014,deGennes_book1995,Sonnet_book2012}.

The bulk potential $\Bulkfunc$ is a double-well type of function that controls the eigenvalues of $\vQ$. The simplest form is given by
\begin{equation}\label{eqn:Landau-deGennes_bulk_potential}
\begin{split}
	\Bulkfunc (\vQ) = \BulkK + \frac{\BulkA}{2} \tr (\vQ^2) - \frac{\BulkB}{3} \tr (\vQ^3) + \frac{\BulkC}{4} \left( \tr (\vQ^2) \right)^2,
\end{split}
\end{equation}
where $\BulkA$, $\BulkB$, $\BulkC$ are material parameters such that $\BulkA$ has no sign, and $\BulkB$, $\BulkC$ are positive; $\BulkK$ is a convenient constant.  It is typical to let $\BulkA \leq 0$ since we are interested in uniaxial states, so throughout this paper we assume that
\begin{equation}\label{eqn:Landau-deGennes_bulk_params}
\begin{split}
	\BulkA \leq 0, \quad \BulkB, \BulkC > 0,
\end{split}
\end{equation}
which implies that $\Bulkfunc (\vQ) \geq 0$ assuming $\BulkK$ is suitably chosen.

In two dimensions, $\tr(\vQ^3) = 0$, because $\vQ^2 = \frac{s^2}{4} \vI$.  Hence, $\BulkB$ is irrelevant in $2d$, and it is necessary that $\BulkA$ be strictly negative in order to have a stable nematic phase.  This also implies that $\Bulkfunc$ is an \emph{even} function of $s$ if $\vQ$ is uniaxial (see Remark \ref{rmk:QTensor_2d}).

As a simplification, one can take $\Li_{1} = 1$, $\Li_{2} = \Li_{3} = 0$ in \eqref{eqn:Landau-deGennes_energy_example} to obtain a \emph{one-constant approximation}
\begin{equation}\label{eqn:Landau-deGennes_energy_one_const}
\begin{split}
\ELdGone [\vQ] &:= \frac{1}{2} \iO |\nabla \vQ|^2 \, dx + \frac{1}{\Bulkcoef} \iO \Bulkfunc (\vQ) \, dx,
\end{split}
\end{equation}

%---------------------------------------------------------------------------
\subsubsection{Ericksen Model}\label{sec:ericksen_model}
%---------------------------------------------------------------------------

Though the Landau - de Gennes model is quite general, it can be fairly expensive when $d = 3$.  In such a case, since $\vQ \in \R^{3 \times 3}$ and symmetric, it has five independent variables.  Moreover, the bulk potential $\Bulkfunc$ is a non-linear function of $\vQ$, which couples all five variables together when seeking a minimizer of $\ELdG$.

Assuming that $\vQ$ is uniaxial \eqref{eqn:Q_matrix_uniaxial}, we can take $s$ \emph{and} $\vn$ as order parameters.  
In the same way as \eqref{eqn:Landau-deGennes_energy_one_const}, we have a \emph{one-constant} Ericksen model:
\begin{equation}\label{eqn:Ericksen_energy_TEMP_one_const}
\begin{split}
\Eerkone [s,\vn] &:= \frac{\kappa}{2} \iO |\nabla s|^2 \, dx + \frac{1}{2} \iO s^2 |\nabla \vn|^2 \, dx + \frac{1}{\Bulkcoef} \iO \DWfunc(s) \, dx,
\end{split}
\end{equation}
where $\kappa > 0$ is a single material parameter, and $\DWfunc$ is a double-well potential acting on $s$, which is taken from the Landau - de Gennes case: $\DWfunc(s) = \Bulkfunc (\vQ(s))$, where $\vQ$ is any matrix having the form \eqref{eqn:Q_matrix_uniaxial}.

\begin{remark}[Oseen-Frank model]
In case the degree of orientation is a non-zero constant field, the energy $\Eerkone$ effectively reduces to the Oseen-Frank energy \cite{Virga_book1994}: $E_{\mathrm{OF}}[\vn] := \int_\Omega |\nabla \vn|^2$. The Oseen-Frank model has been used extensively in the modeling of liquid crystal-based flat panel displays. Minimizers of the one-constant energy in such a model are director fields $\vn \colon \Om \to \Sp^{d-1}$ satisfying $\Delta \vn - \lambda \vn = 0$, where $\lambda$ is a Lagrange multiplier that enforces the unit length constraint.

In the Oseen-Frank model, point defects in three dimensional domains have finite energy. However, this model is incapable of capturing higher-dimensional defects, that is, defects supported either on lines or planes. Since these naturally occur in many liquid crystal systems, this is a major inherent limitation of the Oseen-Frank model.
\end{remark}

We point out that \eqref{eqn:Ericksen_energy_TEMP_one_const} is \emph{degenerate,} in the sense that $s$ may vanish; this allows for $\vn$ to have discontinuities (i.e. defects) with finite energy.  Indeed, the hallmark of this model is to regularize defects using $s$, but still retain part of the Oseen-Frank model. Discontinuities in $\vn$ may still occur in the singular set
\begin{equation}
    \label{eqn:singular_set}
\Sing := \{ x \in \Om :\; s(x) = 0 \}.
\end{equation}

For problems in $\R^3$, because $\vn \in \Sp^{2}$, it is uniquely defined by two parameters. Thus, in such a case the Ericksen model only has three scalar order parameters, as opposed to five in the Landau - de Gennes model. Another advantage of the Ericksen model is that $s$ and $\vn$ provide a natural way to split the system which is convenient for numerical purposes.
Additionally, the parameter $\kappa$ in \eqref{eqn:Ericksen_energy_TEMP_one_const} plays a major role in the occurrence of defects. Assuming that $s$ equals a sufficiently large positive constant on $\dOm$, if $\kappa$ is large, then $\iO \kappa |\nabla s|^2 dx$ dominates the energy and $s$ stays close to such a positive constant within the domain $\Om$. Thus, defects are less likely to occur. If $\kappa$ is small (say $\kappa < 1$), then $\iO s^2 |\nabla \vn|^2 dx$ dominates the energy, and $s$ may vanish in regions of $\Om$ and induce a defect.  This is confirmed by the numerical experiments in \cite{NochettoWalker_MRSproc2015,Nochetto_SJNA2017}.

\begin{remark}[orientability]
\label{rem:vector_vs_line_field}
Director field models --either Oseen-Frank or Ericksen-- are more than adequate in some situations, although in general they introduce a {\em nonphysical orientational bias} into the problem. Even though liquid crystal molecules may be polar, in nematics one always finds that the states with $\vn$ and $-\vn$ are equivalent \cite{Gramsbergen_PR1986}. At the molecular level, this means that the same number of molecules point ``up'' and ``down.'' Therefore, \emph{line-fields} are more appropriate for modeling nematic liquid crystals.

Another issue with the use of the vector field $\vn$ as an order parameter instead of the matrix $\vQ$ is that the only allowable defects in such a case are \emph{integer order} defects. On the other hand $\vQ$, specifically $\vn \otimes \vn$ in \eqref{eqn:Q_matrix_uniaxial}, is able to represent \emph{line fields} having half-integer defects.  These have been largely observed and documented in experiments; see for example \cite{Brinkman_PT1982,Ohzono_SR2017} and references therein. We point out that, if a line field is \emph{orientable}, then a vector field representation is essentially equivalent \cite{Ball_PAMM2007,Ball_ARMA2011}.
\end{remark}

%%%%%%%%%%%%%%%%%%%%%%%%%%%%%%%%%%%%%%%%%%%%%%%%%%%%%%%%%%%%%%%%%%%%%%%%%%%%
\section{Mathematical formulation}\label{sec:formulation}
%%%%%%%%%%%%%%%%%%%%%%%%%%%%%%%%%%%%%%%%%%%%%%%%%%%%%%%%%%%%%%%%%%%%%%%%%%%%

In this work, we will be concerned with the one-constant energy for $\vQ$, given by \eqref{eqn:Landau-deGennes_energy_one_const}. Enforcing $\vQ$ to be symmetric and traceless, one can, in principle, directly minimize such an energy. For three-dimensional problems, a standard approach to finding minimizers \cite{Araki_PRL2006, Tojo_EPJE2009, Sonnet_book2012, Kim_JPCM2013} is to express $\vQ(x)$ as
\begin{equation}\label{eqn:Q-tensor_order_parameter}
\vQ(x) =
\left[
\begin{array}{ccc}
q_1 & q_3 & q_4 \\
q_3 & q_2 & q_5 \\
q_4 & q_5 & -(q_1 + q_2) \\
\end{array}
\right],
\end{equation}
i.e. minimize \eqref{eqn:Landau-deGennes_energy_one_const} with respect to the order parameters $\{ q_i(x) \}^5_{i=1}$. This approach has two drawbacks.

First, a basic argument shows that minimizers of $\iO \Bulkfunc$ have the form of a uniaxial nematic \eqref{eqn:Q_matrix_uniaxial} \cite{Sonnet_book2012}. This is \emph{false} for $\ELdGone$ in \eqref{eqn:Landau-deGennes_energy_one_const} with general boundary conditions. Thus, minimizers of the form \eqref{eqn:Q-tensor_order_parameter} \emph{violate} the algebraic form of \eqref{eqn:Q_matrix_uniaxial} and exhibit a {\em biaxial escape} \cite{Palffy-muhoray_LC1994, Sonnet_PRE1995, Lamy_arXiv2013}. This is analogous to the escape to the 3rd dimension in liquid crystal director models \cite{Virga_book1994}. This is not desirable if the underlying nematic liquid crystal is guaranteed to be uniaxial (recall Remark \ref{rem:biaxial_vs_uniaxial}).
Secondly, minimizing \eqref{eqn:Landau-deGennes_energy_one_const} with $\vQ$ of the form \eqref{eqn:Q-tensor_order_parameter} leads to a non-linear system with five coupled variables in 3d, so it is expensive to solve and possibly not robust \cite{Lee_APL2002,Ravnik_LC2009,Zhao_JSC2016,Zhao_JCP2016}.

These drawbacks motivate us to enforce the uniaxiallity constraint \eqref{eqn:Q_matrix_uniaxial} directly in the Landau - de Gennes one-constant energy \eqref{eqn:Landau-deGennes_energy_one_const}. The ensuing model has similarities with the Ericksen model \eqref{eqn:Ericksen_energy_TEMP_one_const}, although it has the advantage of allowing minimizers to exhibit half-integer order defects. Our approach hinges on previous work on the Ericksen model \cite{NochettoWalker_MRSproc2015,Nochetto_JCP2018,Nochetto_SJNA2017}, which exploits a hidden structure of \eqref{eqn:Ericksen_energy_TEMP_one_const}. We next reveal such structure for the Landau - de Gennes model with uniaxial constraint and point out the corresponding counterpart for the Ericksen model when appropriate. Compared to directly minimizing \eqref{eqn:Landau-deGennes_energy_one_const} using  \eqref{eqn:Q-tensor_order_parameter}, our algorithm finds a minimizer by \emph{solving a sequence of linear systems of smaller dimension}. However, our approach is equivalent to directly minimizing the energy \eqref{eqn:Landau-deGennes_energy_one_const} for two-dimensional problems (see Remark \ref{rmk:QTensor_2d}).

%---------------------------------------------------------------------------
\subsection{The Basic Structure} \label{sec:basic-structure}
%---------------------------------------------------------------------------
We start with the main part (elastic energy) of the one-constant Ericksen model in \eqref{eqn:Ericksen_energy_TEMP_one_const}, namely
\begin{equation} \label{eqn:Ericksen_elastic_energy_one_const}
    \Eerkmain[s,\vn] := \frac{1}{2} \iO \Big(\kappa|\nabla s|^2 \, dx + s^2 |\nabla \vn|^2 \Big) \, dx.
\end{equation}
It is clear that a configuration $(s,\vn)$ with finite elastic energy implies $s\in H^1(\Om)$ and that the weight $s$ vanishing within the singular set $\Sing$ of \eqref{eqn:singular_set} allows for director fields $\vn$ with infinite Dirichlet energy and thus for the presence of defects. The hidden structure in \eqref{eqn:Ericksen_elastic_energy_one_const} becomes apparent upon introducing the auxiliary variable $\vu = s \vn$ as proposed first in \cite{Ambrosio_MM1990a, Lin_CPAM1991}: since $| \vn | = 1$ we get $\nabla\vn \, \vn = \vzero$ and the pointwise orthogonal decomposition $\nabla \vu = \vn \otimes \nabla s + s \nabla \vn$. Consequently, \eqref{eqn:Ericksen_elastic_energy_one_const} can be equivalently written
\begin{equation}\label{eqn:Ericksen_auxiliary_energy_identity}
  \Eerkmain[ s , \vn] = \EerkUmain[s,\vu] := \frac12 \iO \Big((\kappa - 1) | \nabla s |^2 + | \nabla  \vu |^2 \Big) dx,
\end{equation}
to discover that $\vu \in [H^1(\Omega)]^d$. Moreover, it is apparent from \eqref{eqn:Ericksen_auxiliary_energy_identity} that if $\kappa > 1$ the Ericksen energy $\EerkUmain[s,\vu]$ is convex with respect to $(s,\vu)$. The physically relevant case $0 < \kappa < 1$ in terms of defects is more difficult with regard to proving $\Gamma$-convergence, because convexity of $\EerkUmain[s,\vu]$ is no longer obvious unless we exploit the relation $|s|=|\vu|$. This relation can only be enforced at the nodes of a finite element approximation of $(s,\vu)$, whence convexity as well as weak lower semi-continuity of $\EerkUmain[s,\vu]$  become problematic \cite{NochettoWalker_MRSproc2015,Nochetto_SJNA2017,Nochetto_JCP2018}; we will refer to this issue later in Lemma \ref{lem:LdG_wlsc}.

We now turn to the Landau - de Gennes model with uniaxial constraint \eqref{eqn:Q_matrix_uniaxial}. To this end, we introduce the line field $\vNN = \vn \otimes \vn$, which will be treated as a control variable in minimizing \eqref{eqn:Landau-deGennes_energy_one_const} subject to \eqref{eqn:Q_matrix_uniaxial}. Since $\nabla \vQ$ is a 3-tensor of the form $\nabla \vQ = \nabla s \otimes \left( \vNN - \frac1d \vI \right) + s \nabla \vNN$, we have
\[
 |\nabla \vQ|^2 = |\nabla s|^2 \left| \vNN - \frac1d \vI \right|^2 + s^2 |\nabla \vNN|^2 + 2 s \left[ \nabla s \otimes \left( \vNN - \frac1d \vI \right) \right] \dd \nabla \vNN.
 \]
 A direct calculation gives $\left| \vNN - \frac1d \vI \right|^2 = \frac{d-1}{d}$ and $\left[ \nabla s \otimes \left( \vNN - \frac1d \vI \right) \right] \dd \nabla \vNN = 0$ because $\nabla\vNN : \vNN = \nabla\vNN : \vI = \vzero$. Therefore, we obtain the first relation with the Ericksen model
\begin{align*}
    |\nabla \vQ|^2 = \frac{d-1}{d} |\nabla s|^2  + s^2 |\nabla \vNN|^2.
\end{align*}
The second one comes from the equalities
\begin{equation*}
   s^2 = C_2 \, \tr(\vQ^2), \quad s^3 = C_3 \, \tr(\vQ^3), \quad s^4 = C_4 \, (\tr(\vQ^2))^2,
\end{equation*}
which are valid for suitable constants $C_2,C_3,C_4 > 0$. Consequently, the double-well potential $\Bulkfunc (\vQ)$ in \eqref{eqn:Landau-deGennes_bulk_potential} becomes a quartic function $\BulkLdG(s) = \Bulkfunc (\vQ)$ of $s$ that blows-up at the end points of the interval $[-\frac{1}{d-1}, 1]$ and forces $s$ to remain within this physical range. If we let the main energy be
\begin{equation}\label{eqn:main-energy}
  \Eunimain[s,\vNN]:=\Eunis[s]+\Euniring[s,\vNN],
\end{equation}
where the orientation, interaction and bulk energies are given by
\[
\Eunis[s] := \frac{d-1}{2d} \iO |\nabla s|^2,
\quad
\Euniring[s,\vNN] := \frac12 \iO s^2 |\nabla \vNN|^2 \, dx,
\quad
\Ebulk[s] := \frac{1}{\Bulkcoef} \iO \BulkLdG (s) \, dx,
\]
then the Landau - de Gennes total energy $\Eunit[s,\vNN]=\ELdGone [\vQ]$ in \eqref{eqn:Landau-deGennes_energy_one_const} reads
\begin{equation}\label{eqn:nematic_Q-tensor_energy_s_ntens}
\Eunit[s,\vNN] = \Eunimain[s,\vNN] + \Ebulk[s].
\end{equation}
We see that this energy has the \emph{same form} as the Ericksen energy \eqref{eqn:Ericksen_energy_TEMP_one_const}, except that $\vNN$ replaces $\vn$ and $\kappa = (d-1)/d < 1$. This motivates a change of variable analogous to the one in the Ericksen model: we set $\vU := s \vNN$ and note that $\nabla\vU = \nabla s \otimes\vNN + s \nabla\vNN$ is a $d$-tensor with orthogonal components, whence
\[
|\nabla\vU|^2 = |\nabla s|^2 + s^2 |\nabla\vNN|^2
\]
and the main and total energies in terms of $(s,\vU)$ read
\begin{gather}\label{eqn:Q_tensor_energy_sU}
  \Eunimain[s,\vNN] = \EuniUmain[s,\vU] := -\frac{1}{2d} \iO  |\nabla s|^2 \, dx + \frac12 \iO |\nabla \vU|^2 \, dx,
  \\
  \label{eqn:total-energy-U}
  \EuniUtotal[s,\vU] := \EuniUmain[s,\vU] + \Ebulk[s].
\end{gather}
Similarly, we could set $\widetilde{s} := |s|$ and $\widetilde\vU := \widetilde{s}\vNN$ to arrive at $\EuniUmain[\widetilde{s},\widetilde\vU] = \EuniUmain[s,\vU]$ because $|\nabla\widetilde{s}| = |\nabla s|$ a.e. in $\Om$.
We are now able to reach similar conclusions as for the Ericksen model. If $\ELdGone [\vQ]<\infty$, then $(s,\vU)\in H^1(\Omega) \times [H^1(\Omega)]^{d\times d}$ but in general $\vNN \notin [H^1(\Omega)]^{d\times d}$ because the presence of the weight $s^2$ in $\Euniring[s,\vNN]$ allows for blow-up of $\nabla\vNN$ in the singular set $\Sing$ of \eqref{eqn:singular_set}. We intend to preserve this basic structure discretely. In fact, this will be crucial later in Section \ref{sec:LdG_Gamma_conv} to interpret $\nabla\vNN$ in the Lebesgue $L^2$ sense and recover the orthogonality relation $|\nabla\vU|^2 = |\nabla s|^2 + s^2 |\nabla\vNN|^2$ a.e. in $\Omega\setminus\Sing$, as well as to derive $\Gamma$ convergence.

In order to define the {\it admissible class} of functions, we begin with the set of {\it line fields}
\begin{equation} \label{eqn:line-fields}
\LL^{d-1} := \{ \vA \in \R^{d\times d} \colon \mbox{ there exists } \vn \in \Sp^{d-1}, \ \vA = \vn \otimes \vn \}.
\end{equation}
We say that a triple $(s,\vNN,\vU)$ satisfies the {\it structural condition} provided 
\begin{equation}\label{eqn:LdG_structural_conditions}
  -\frac{1}{d-1} \leq s \leq 1, \qquad \vU = s \vNN, \qquad \vNN \in \LL^{d-1}
  \qquad \text{a.e. } \Om.
\end{equation}
We next define the admissible class of functions to be
\begin{equation}\label{eqn:LdG_admissibleclass}
\Admisuni := \big \{ (s, \vNN, \vU) \in H^1(\Om) \times [L^\infty(\Om)]^{d \times d} \times [H^1(\Om)]^{d \times d}: (s,\vNN,\vU)  \text{ satisfies \eqref{eqn:LdG_structural_conditions}} \big \},
\end{equation}
To enforce boundary conditions, let $(\Gamma_s,\Gamma_\vNN,\Gamma_\vU)$ with $\Gamma_\vNN=\Gamma_\vU$ be open subsets of $\dOm$ where we impose Dirichlet conditions. Given functions $(g,\vM,\vR) \in W^1_\infty(\R^d) \times [L^\infty(\R^d)]^{d\times d} \times [W^1_\infty(\R^d)]^{d \times d}$ that satisfy the structural condition \eqref{eqn:LdG_structural_conditions} in a neighborhood of $\dOm$, we define the restricted admissible class
\begin{align}\label{eqn:LdG_restricted_admissible_class}
  \Admisuni(g,\vR) := \big\{ (s, \vNN, \vU) \in \Admisuni : \ s|_{\Gamma_s} = g, \quad \vU|_{\Gamma_\vU} = \vR \big\},
\end{align}
Moreover, we assume that for some $\delta_0 > 0$
\begin{equation} \label{eqn:g_nondegenerate}
-\frac{1}{d-1} + \delta_0 \le g \le 1-\delta_0 \quad \mbox{ in } \Om,
\end{equation}
and
\begin{equation} \label{eqn:g_geq_0}
g \ge \delta_0 \ \mbox{ on } \dOm, 
\end{equation}
so that the function $\vM$ is of class $W^1_\infty$ in a neighborhood of $\Gamma_\vNN$ and satisfies $\vM = g^{-1} \vR \in \LL^{d-1}$ on $\Gamma_\vNN$.

Finally, we assume that the coefficients $\BulkA, \BulkB, \BulkC$ in 
\eqref{eqn:Landau-deGennes_bulk_potential} are such that
\begin{equation} \label{eqn:psi_nondegenerate} 
\BulkLdG(s) \ge \BulkLdG(1-\delta_0) \ \mbox{ for } s \ge 1- \delta_0, \quad \BulkLdG(s) \ge \BulkLdG\left(-\frac{1}{d-1}+\delta_0\right) \ \mbox{ for } s \le -\frac{1}{d-1}+ \delta_0.
\end{equation}
This will lead to confinement of $s$ with the interval $[-\frac{1}{d-1}+ \delta_0,1- \delta_0]$.

%%%%%%%%%%%%%%%%%%%%%%%%%%%%%%%%%%%%%%%%%%%%%%%%%%%%%%%%%%%%%%%%%%%%%%%%%%%%
\section{Discretization} \label{sec:LdG_discretization}
%%%%%%%%%%%%%%%%%%%%%%%%%%%%%%%%%%%%%%%%%%%%%%%%%%%%%%%%%%%%%%%%%%%%%%%%%%%%

Let $\Tk_h = \{ T \}$ be a conforming shape-regular and quasi-uniform triangulation of $\Om$ made of simplices. Let $\Nk_h=\{x_i\}_{i=1}^N$ be the set of nodes (vertices) $x_i$ of $\Tk_h$ and $N$ be its cardinality. Let $\phi_i$ be the standard ``hat'' basis function associated with the node $x_i \in \Nk_h$. We indicate with $\omega_i = \text{supp} \;\phi_i$ the patch of a node $x_i$ (i.e. the ``star'' of elements in $\Tk_h$ that contain the vertex $x_i$). For simplicity we assume that $\Om=\Om_h$, so that there is no geometric error caused by domain approximation. We further assume that $\Tk_h$ is {\it weakly acute}, namely
\begin{equation}\label{eqn:weakly-acute}
  k_{ij} := -\int_{\Om} \nabla \phi_i \cdot \nabla \phi_j \, dx \geq 0 \quad\text{for all } i\ne j.
\end{equation} 
Condition \eqref{eqn:weakly-acute} ensures the validity of the discrete maximum principle.
However, \eqref{eqn:weakly-acute} imposes a severe geometric restriction on $\Tk_h$ \cite{Ciarlet_CMAME1973, Strang_FEMbook2008}, especially in three dimensions. 

We consider three continuous piecewise linear Lagrange finite element spaces on $\Om$:
\begin{equation}\label{eqn:LdG_discrete_spaces}
\begin{split}
  \Sh &:= \{ s_h \in H^1(\Om) : s_h |_{T} \text{ is affine for all } T \in \Tk_h \}, \\
\Uh &:= \{ \vU_h \in [H^1(\Om)]^{d\times d} : \text{ each entry of } \vU_h |_{T} \text{ is afffine for all } T \in \Tk_h \}, \\
\THh &:= \{ \vNN_h \in \Uh : \vNN_h(x_i) \in \LL^{d-1}, \text{ for all } x_{i} \in \Nk_h  \},
\end{split}
\end{equation}
where $\THh$ imposes both the rank-one and unit-norm constraints only at the vertices of the mesh $\Tk_h$. We say that the discrete triple $(s_h,\vNN_h,\vU_h) \in \Sh \times \THh \times \Uh$ satisfies the {\it discrete structural condition} if
\begin{equation}\label{eqn:LdG_struct_condition_discrete}
\vU_h = I_h (s_h \vNN_h), \qquad - \frac{1}{d-1} \leq s_h \leq 1,
\end{equation}
where $I_h$ stands for the Lagrange interpolation operator. All such triples make the {\it discrete admissible set} $\Admisuni^{h}$. We let $g_h := I_h g$, $\vR_h := I_h \vR$, and $\vM_h := I_h \vM$ be the discrete Dirichlet data, and incorporate Dirichlet boundary conditions within the discrete spaces:
\begin{equation*}\label{eqn:LdG_discrete_spaces_BC}
\begin{split}
	\Sh (g_h) &:= \{ s_h \in \Sh : s_h |_{\Gamma_s} = g_h \}, \\
	\Uh (\vR_h) &:= \{ \vU_h \in \Uh : \vU_h |_{\Gamma_\vU} = \vR_h \}, \\
	\THh (\vM_h) &:= \{ \vNN_h \in \THh : \vNN_h |_{\Gamma_\vNN} = \vM_h \}.
\end{split}
\end{equation*}
In view of \eqref{eqn:g_geq_0}, the following compatibility condition must hold:
$\vM_h = I_h[g_h^{-1} \vR_h]$ on $\Gamma_\vNN$.
This leads to the following discrete admissible class with boundary conditions:
\begin{equation}\label{eqn:LdG_admissible_class_BC_discrete}
	\Admisuni^{h}(g_h,\vR_h) := \big\{(s_h,\vNN_h,\vU_h) \in \Sh(g_h)\times\THh(\vM_h)\times\Uh(\vR_h)): \quad (s_h,\vU_h,\vNN_n) \text{ satisfies \eqref{eqn:LdG_struct_condition_discrete}} \big\},
\end{equation}

We are now ready to introduce the discrete version of $\Eunimain[s, \vNN]$ which mimics that of the Ericksen model \cite{NochettoWalker_MRSproc2015,Nochetto_SJNA2017,Nochetto_JCP2018}. First note that
$
\sum_{j=1}^N k_{ij} = 0
$
for all $x_i \in \Nk_h$, and for $s_h = \sum_{i=1}^N s_h(x_i) \phi_i \in\Sh$ we have
\begin{align*}
  \int_{\Om} | \nabla s_h |^2 dx = -\sum_{i=1}^N k_{ii} s_h(x_i)^2 - \sum_{i, j = 1, i \neq j}^N k_{ij} s_h(x_i) s_h(x_j).
\end{align*}
Using $k_{ii} = - \sum_{j \neq i} k_{ij}$ and the symmetry $k_{ij}=k_{ji}$, we thus obtain
\begin{equation}\label{eqn:dirichlet_integral_identity}
\begin{aligned}
  \int_{\Om} | \nabla s_h |^2 dx 
= \frac{1}{2} \sum_{i, j = 1}^N k_{ij} \big( \dij s_h \big)^2,
\end{aligned}
\end{equation}
where we have introduced the notation
\begin{equation}\label{eqn:delta_ij}
\dij s_h := s_h(x_i) - s_h(x_j), \quad \dij \vNN_h := \vNN_h(x_i) - \vNN_h(x_j).
\end{equation}
We next define the main part of the discrete Landau - de Gennes energy to be
\begin{equation}\label{eqn:LdG_discrete_energy_main}
\begin{split}
\Eunimain^h[s_h, \vNN_h] := & \frac{d-1}{4d} \sum_{i, j = 1}^N k_{ij} \left( \dij s_h \right)^2 + \frac{1}{4} \sum_{i, j = 1}^N k_{ij} \left(\frac{s_h(x_i)^2 + s_h(x_j)^2}{2}\right) |\dij \vNN_h|^2.
\end{split}
\end{equation}
We point out that the first term corresponds to
\[
\Eunis^h [s_h] = \frac{d-1}{2d} \iO |\nabla s_h|^2 dx = 
\frac{d-1}{4d} \sum_{i, j = 1}^N k_{ij} \left( \dij s_h \right)^2,
\]
while the second term is a first order nonstandard approximation of $\Euniring[s,\vNN]=\frac12\iO s^2|\nabla \vNN|^2 dx$,
\begin{equation} \label{eqn:LdG_def_Euniringh}
\Euniring^h[s_h,\vNN_h] := \frac14 \sum_{i, j = 1}^N k_{ij} \left(\frac{s_h(x_i)^2 + s_h(x_j)^2}{2}\right) |\dij \vNN_h|^2
\end{equation}
introduced in \cite{Nochetto_SJNA2017}. As we will see below, a key feature of this discretization is that it makes it possible to handle degenerate parameters $s_h$ without regularization. This is due to Lemma \ref{lem:energy_inequality}, which deals with discrete versions of $\EuniUmain[s,\vU]$ defined in \eqref{eqn:Q_tensor_energy_sU} involving the auxiliary variable $\vU_h$:
\begin{equation}\label{eqn:EuniUmain}
\EuniUmain^h[s_h,\vU_h] := -\frac{1}{2d} \iO |\nabla s_h|^2 dx + \frac12\iO |\nabla \vU_h|^2 dx.
\end{equation}

We finally discretize the nonlinear bulk energy in the usual manner
\begin{equation}\label{eqn:LdG_discrete_energy_bulk}
\Ebulk^h [s_h] := \frac{1}{\Bulkcoef} \iO \BulkLdG (s_h) dx.
\end{equation}
With the notation introduced above, the formulation of the discrete problem is as follows: find $(s_h, \vNN_h) \in \Sh (g_h) \times \THh (\vM_h)$ such that the following discrete total energy is minimized:
\begin{equation}\label{eqn:LdG_discrete_energy}
\Eunit^h[s_h, \vNN_h] := \Eunimain^h [s_h, \vNN_h] + \Ebulk^h [s_h].
\end{equation}

Because the discrete spaces consist of piecewise linear functions, the structural condition $\vU_h = s_h \vNN_h$ is only satisfied at the mesh nodes (cf. \eqref{eqn:LdG_struct_condition_discrete}). Therefore, there is a variational crime that we need to account for. To this end, we now derive energy inequalities similar to \cite[Lemma 2.2]{Nochetto_SJNA2017}. Although the arguments are the same, we present the proof for completeness. For our analysis, we introduce the functions
\begin{equation} \label{eqn:def_of_tilde}
\widetilde{s}_h = I_h(|s_h|), \qquad \widetilde{\vU}_h = I_h (|s_h| \vNN_h),
\end{equation}
and remark that $(\widetilde{s}_h, \vNN_h, \widetilde{\vU}_h)$ satisfies \eqref{eqn:LdG_struct_condition_discrete}.

\begin{lemma}[energy inequality] \label{lem:energy_inequality}
  Let the mesh $\Tk_h$ satisfy \eqref{eqn:weakly-acute}. Then, for all $(s_h,\vNN_h,\vU_h) \in \Admisuni^h(g_h,\vR_h)$, the main part of the discrete Landau - de Gennes energy satisfies
\begin{equation}\label{eqn:energyequality}
  \Eunimain^h[s_h, \vNN_h] - \EuniUmain^h[s_h,\vU_h] = \consistLdG_h,
\end{equation}
as well as
\begin{equation} \label{eqn:energyequality_tilde}
  \Eunimain^h[s_h, \vNN_h] - \EuniUmain^h[\widetilde{s}_h,\widetilde{\vU}_h]
  \ge \consistULdG_h,
\end{equation}
where $\EuniUmain^h[s_h,\vU_h]$ is defined in \eqref{eqn:EuniUmain} and
\begin{equation}\label{eqn:LdG_residual}
  \consistLdG_h := \frac{1}{8} \sum_{i, j = 1}^N k_{ij} \big(\dij s_h
  \big)^2 \big| \dij \vNN_h \big|^2 \ge 0,
  \qquad
  \consistULdG_h := \frac{1}{8} \sum_{i, j = 1}^N k_{ij}
  \big(\dij \widetilde{s}_h \big)^2 \big| \dij \vNN_h \big|^2 \ge 0.
\end{equation}
\end{lemma}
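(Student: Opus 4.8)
The plan is to expand both sides of \eqref{eqn:energyequality} node-by-node and reduce the identity to a pointwise algebraic fact about the values of $s_h$ and $\vNN_h$ at a pair of adjacent vertices. First I would record the three summation identities I will need: the Dirichlet identity \eqref{eqn:dirichlet_integral_identity} applied to $s_h$; the same identity applied componentwise to each entry of $\vU_h$, giving $\iO|\nabla\vU_h|^2 = \tfrac12\sum_{i,j}k_{ij}|\dij\vU_h|^2$; and the observation that, since $\vU_h = I_h(s_h\vNN_h)$, we have $\vU_h(x_i) = s_h(x_i)\vNN_h(x_i)$ at every node, so $\dij\vU_h = s_h(x_i)\vNN_h(x_i) - s_h(x_j)\vNN_h(x_j)$. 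Substituting these into $\EuniUmain^h[s_h,\vU_h] = -\tfrac{1}{2d}\iO|\nabla s_h|^2 + \tfrac12\iO|\nabla\vU_h|^2$ turns the left-hand side of \eqref{eqn:energyequality} into a single sum $\tfrac14\sum_{i,j}k_{ij}\,T_{ij}$, where $T_{ij}$ collects the coefficient of $k_{ij}$ coming from $\Eunimain^h$ minus that coming from $\EuniUmain^h$.

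The heart of the argument is then the purely algebraic identity for the integrand: writing $a = s_h(x_i)$, $b = s_h(x_j)$, $\vP = \vNN_h(x_i)$, $\vR = \vNN_h(x_j)$ with $|\vP|^2 = |\vR|^2 = 1$ (this is where the constraint $\vNN_h(x_i)\in\LL^{d-1}$, hence $\tr(\vP)=\tr(\vR)=1$ and $|\vP|=|\vR|=1$, enters — in particular $\vP:\vR = 1 - \tfrac12|\vP-\vR|^2$), one checks
\[
  \left(\frac{a^2+b^2}{2}\right)|\vP - \vR|^2 \;=\; |a\vP - b\vR|^2 - (a-b)^2 \;+\; \frac12 (a-b)^2 |\vP-\vR|^2 .
\]
This is verified by expanding $|a\vP-b\vR|^2 = a^2|\vP|^2 - 2ab\,\vP:\vR + b^2|\vR|^2 = a^2 + b^2 - 2ab(1-\tfrac12|\vP-\vR|^2)$ and rearranging; the $|\nabla s_h|^2$ terms contribute the $-\tfrac1d|\nabla s_h|^2$ discrepancy between the coefficients $\tfrac{d-1}{d}$ and $1$, which cancels against the $-\tfrac{1}{2d}\iO|\nabla s_h|^2$ term in $\EuniUmain^h$, leaving exactly the claimed remainder $\consistLdG_h = \tfrac18\sum_{i,j}k_{ij}(\dij s_h)^2|\dij\vNN_h|^2$. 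Since $k_{ij}\ge0$ for $i\ne j$ by \eqref{eqn:weakly-acute} and the $i=j$ terms vanish, $\consistLdG_h\ge0$.

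For the inequality \eqref{eqn:energyequality_tilde}, I would apply the identity \eqref{eqn:energyequality} already proved, but with $s_h$ replaced by $\widetilde s_h = I_h(|s_h|)$ and $\vU_h$ replaced by $\widetilde\vU_h = I_h(|s_h|\vNN_h)$ — noting that $(\widetilde s_h,\vNN_h,\widetilde\vU_h)$ still satisfies \eqref{eqn:LdG_struct_condition_discrete} — to get $\Eunimain^h[\widetilde s_h,\vNN_h] - \EuniUmain^h[\widetilde s_h,\widetilde\vU_h] = \consistULdG_h$. It then remains to compare $\Eunimain^h[s_h,\vNN_h]$ with $\Eunimain^h[\widetilde s_h,\vNN_h]$. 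The orientation term is unchanged because $\iO|\nabla\widetilde s_h|^2 = \iO|\nabla s_h|^2$ — this follows from \eqref{eqn:dirichlet_integral_identity} together with the weak-acuteness sign condition and the elementary inequality $(|a|-|b|)^2 \le (a-b)^2$ applied termwise (actually one needs $\widetilde s_h(x_i) = |s_h(x_i)|$, so these differences are $\big||a|-|b|\big| \le |a-b|$, and $k_{ij}\ge0$). Wait — more carefully, these are equal only if $|\nabla I_h|s_h||=|\nabla s_h|$ pointwise, which is true a.e. on each simplex where $s_h$ does not change sign and is a genuine inequality $\le$ otherwise; in either case $\iO|\nabla\widetilde s_h|^2\le\iO|\nabla s_h|^2$. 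For the interaction term, $\widetilde s_h(x_i)^2 = s_h(x_i)^2$ at every node, so $\Euniring^h[\widetilde s_h,\vNN_h] = \Euniring^h[s_h,\vNN_h]$ exactly. Hence $\Eunimain^h[\widetilde s_h,\vNN_h] \le \Eunimain^h[s_h,\vNN_h]$, which combined with the equality above yields \eqref{eqn:energyequality_tilde}.

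The main obstacle is purely bookkeeping: getting the constants $\tfrac{d-1}{4d}$, $\tfrac14$, $\tfrac18$ and $-\tfrac{1}{2d}$ to line up, and making sure the $|\nabla s_h|^2$ pieces cancel correctly against the negative term in $\EuniUmain^h$ while the cross term $2s(\nabla s\otimes(\vNN-\tfrac1d\vI)):\nabla\vNN$ — which vanishes in the continuous setting — is genuinely absent at the discrete level. I expect no conceptual difficulty beyond the pointwise identity above, which is the discrete analogue of the orthogonal decomposition $|\nabla\vU|^2 = |\nabla s|^2 + s^2|\nabla\vNN|^2$ used earlier in Section~\ref{sec:basic-structure}.
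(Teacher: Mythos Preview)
Your proposal is correct and follows essentially the same route as the paper. The only cosmetic difference is in how the key pointwise identity is derived: the paper writes $a\vP-b\vR = \tfrac{a+b}{2}(\vP-\vR) + \tfrac{\vP+\vR}{2}(a-b)$ and exploits the orthogonality $(\vP-\vR):(\vP+\vR)=0$, whereas you expand $|a\vP-b\vR|^2$ directly via $\vP:\vR = 1 - \tfrac12|\vP-\vR|^2$; both manipulations yield the same formula. Your treatment of \eqref{eqn:energyequality_tilde}---apply the identity with $\widetilde s_h$ in place of $s_h$, then use $\widetilde s_h(x_i)^2=s_h(x_i)^2$ for the interaction term and $(\dij\widetilde s_h)^2\le(\dij s_h)^2$ together with $k_{ij}\ge0$ for the orientation term---is exactly the paper's argument, including your self-correction that $\iO|\nabla\widetilde s_h|^2\le\iO|\nabla s_h|^2$ is in general a strict inequality rather than an equality.
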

\begin{proof}
Expanding 
\[ \begin{aligned}
s_h(x_i) \vNN_h(x_i) - s_h(x_j) \vNN_h(x_j) = &  \frac{s_h(x_i) + s_h(x_j)}{2} \, \dij \vNN_h + \frac{\vNN_h(x_i) + \vNN_h(x_j)}{2} \, \dij s_h 
\end{aligned} \]
and using the orthogonality relation $(\dij \vNN_h): \big(\vNN_h(x_i) + \vNN_h(x_j) \big) = 0$,
we can write
\[
\frac12 \iO |\nabla \vU_h|^2 = \frac14 \sum_{i,j = 1}^N k_{ij} \left( \frac{s_h(x_i) + s_h(x_j)}{2} \right)^2 |\dij \vNN_h|^2 + 
\frac14 \sum_{i,j = 1}^N k_{ij} (\dij s_h)^2 \left| \frac{\vNN_h(x_i) + \vNN_h(x_j)}{2} \right|^2.
\]
Next, we utilize the identities $\big(s_h(x_i) + s_h(x_j)\big)^2 = 2 \big(s_h(x_i)^2 + s_h(x_j)^2\big) - \big(s_h(x_i) - s_h(x_j)\big)^2$ and 
$\big|\vNN_h(x_i) + \vNN_h(x_j)\big|^2 = 4- |\dij \vNN_h|^2$, to obtain
\begin{equation} \label{eqn:LdG_gradvU_h}
\frac12 \iO |\nabla\vU_h|^2 dx = \frac14 \sum_{i,j = 1}^N k_{ij} \left( \frac{s_h(x_i)^2 + s_h(x_j)^2}{2} \right) |\dij \vNN_h|^2 + \frac14 \sum_{i,j = 1}^N k_{ij} (\dij s_h)^2 - \consistLdG_h.
\end{equation}
Identity \eqref{eqn:energyequality} follows immediately. 

On the other hand, repeating the argument above  with $(\widetilde{s}_h, \widetilde{\vU}_h)$ instead of $(s_h,\vU_h)$ gives
\begin{equation}  \label{eqn:LdG_gradvU_h_tilde}
\frac12 \iO |\nabla\widetilde{\vU}_h|^2 dx = \frac14 \sum_{i,j = 1}^N k_{ij} \left( \frac{\widetilde{s}_h(x_i)^2 + \widetilde{s}_h(x_j)^2}{2} \right) |\dij \vNN_h|^2 + \frac14 \sum_{i,j = 1}^N k_{ij} (\dij \widetilde{s}_h)^2 - \consistULdG_h.
\end{equation}
This yields
\[
\EuniUmain^h[\widetilde{s}_h,\widetilde{\vU}_h] = \Eunimain^h[\widetilde{s}_h,\vNN_h] - \consistULdG_h.
\] 
The properties $\consistLdG_h \ge 0$ and $\consistULdG_h \ge 0$ are a consequence of the mesh acuteness assumption \eqref{eqn:weakly-acute}. Moreover, since $|\dij \widetilde{s}_h| \le |\dij s_h|$ and $\widetilde{s}_h (x_i)^2 = s_h (x_i)^2$ for all $i,j = 1, \ldots, N$, we have 
\begin{equation} \label{eqn:bound_nabla_sh}
\| \nabla \widetilde{s}_h \|_{L^2(\Om)} = \frac12 \sum_{i,j = 1}^N k_{ij} (\dij \widetilde{s}_h)^2 \le \frac12 \sum_{i,j = 1}^N k_{ij} (\dij s_h)^2 = \| \nabla s_h \|_{L^2(\Om)}.
\end{equation}
Therefore, $\Eunimain^h[\widetilde{s}_h,\vNN_h] \le \Eunimain^h[s_h,\vNN_h]$ and inequality \eqref{eqn:energyequality_tilde} follows.
\end{proof}

%%%%%%%%%%%%%%%%%%%%%%%%%%%%%%%%%%%%%%%%%%%%%%%%%%%%%%%%%%%%%%%%%%%%%%%%%%%%
\section{$\Gamma$-convergence of the discrete energies} \label{sec:LdG_Gamma_conv}
%%%%%%%%%%%%%%%%%%%%%%%%%%%%%%%%%%%%%%%%%%%%%%%%%%%%%%%%%%%%%%%%%%%%%%%%%%%%

This section shows that the discrete problems \eqref{eqn:LdG_discrete_energy} $\Gamma$-converge to the continuous problem \eqref{eqn:nematic_Q-tensor_energy_s_ntens}. We set the continuous and discrete spaces
\[
\X := L^2(\Omega) \times [L^2(\Omega)]^{d\times d}\times [L^2(\Omega)]^{d\times d},
\qquad \X_h := \Sh \times \THh \times \Uh,
\]
and define $\Eunit [ s, \vNN]$ as in \eqref{eqn:nematic_Q-tensor_energy_s_ntens} if $(s,\vNN) \in \Admisuni(g,\vR)$ and $\Eunit [s,\vNN] = \infty$ if $(s,\vNN) \in \X \setminus \Admisuni(g,\vR)$. In a similar fashion, we define $\Eunit^h[s_h,\vNN_h]$ as in \eqref{eqn:LdG_discrete_energy} if $(s_h,\vNN_h) \in \Admisuni^h(g_h,\vR_h)$ and $\Eunit^h [s_h,\vNN_h] = \infty$ if $(s_h,\vNN_h) \in \X_h \setminus \Admisuni(g_h,\vR_h)$.

%---------------------------------------------------------------------------
\subsection{Lim-sup property: Existence of a recovery sequence}
\label{sec:LdG_limsup}
%---------------------------------------------------------------------------

Our goal is to show the following property: given $(s,\vNN,\vU) \in \X$, there exists a sequence $(s_h,\vNN_h,\vU_h) \in \Admisuni^h(g_h,\vM_h)$ such that 
\begin{equation} \label{eqn:LdG_convergence_H1}
  \| (s,\vU) - (s_h, \vU_h) \|_{H^1(\Om)} \to 0,
  \quad
  \| \vNN - \vNN_h \|_{L^2(\Omega\setminus\Sing)} \to 0,
\end{equation}
as $h\to0$ and
\begin{equation} \label{eqn:LdG_limsup}
\limsup_{h \to 0} \Eunit^h[s_h, \vNN_h] \le \Eunit[s,\vNN],
\end{equation}
where $\Eunit[s,\vNN]$ is defined in \eqref{eqn:nematic_Q-tensor_energy_s_ntens}.

\medskip\noindent
{\bf Truncation.}
Naturally, the interesting case to consider is when $(s, \vNN) \in \Admisuni(g,\vM)$; otherwise the property above is trivially true. As shown in \cite[Lemma 3.1]{Nochetto_SJNA2017}, hypotheses \eqref{eqn:g_nondegenerate} and \eqref{eqn:psi_nondegenerate} make it possible to assume that the degree of orientation $s$ is sufficiently far from the boundary of the physically meaningful range $[-\frac{1}{d-1},1]$. We state this precisely next.

\begin{lemma}[truncation]
Given $(s,\vNN,\vU) \in \Admisuni(g,\vR)$, let $(\hat{s},\widehat\vU)$ be the truncations
\[
\hat{s}(x) := \min \left\{ 1- \delta_0, \max \left\{ -\frac{1}{d-1} + \delta_0, \, s(x) \right\} \right\},
\quad
\widehat\vU := \hat s \vNN.
\]
Then, $(\hat{s},\vNN,\widehat{\vU}) \in \Admisuni(g,\vR)$ and
\[
\Eunimain[\hat{s},\vNN] \le \Eunimain[s,\vNN], \qquad \Ebulk[\hat{s},\vNN] \le \Ebulk[s,\vNN].
\]
Moreover, given $(s_h,\vNN_h,\vU_h) \in \Admisuni^h(g_h,\vR_h)$ and the truncations $(I_h\hat{s}_h,I_h\widehat\vU_h)$, then the same assertion holds for the discrete energies.
\end{lemma}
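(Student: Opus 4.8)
The plan is to transpose \cite[Lemma 3.1]{Nochetto_SJNA2017} to the tensor setting. Throughout write $a:=-\frac{1}{d-1}+\delta_0$, $b:=1-\delta_0$, and let $\Phi\colon\R\to[a,b]$ be the truncation $\Phi(t):=\min\{b,\max\{a,t\}\}$, so that $\hat s=\Phi\circ s$. I will use only the elementary facts that $\Phi$ is nondecreasing and $1$-Lipschitz, that $\Phi(t)=t$ for $t\in[a,b]$ (in particular $\Phi(0)=0$, since $\delta_0<\frac1{d-1}$), and that $|\Phi(t)|\le|t|$, hence $\Phi(t)^2\le t^2$, for every $t\in\R$. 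There is nothing to prove unless $(s,\vNN,\vU)\in\Admisuni(g,\vR)$, so assume this from now on.

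\emph{Admissibility of $(\hat s,\vNN,\widehat\vU)$.} The bounds $-\frac1{d-1}\le\hat s\le1$, the identity $\widehat\vU=\hat s\vNN$, and $\vNN\in\LL^{d-1}$ are immediate, and $\hat s\in H^1(\Om)$ because $\Phi$ is Lipschitz. The only genuine difficulty, which I expect to be the main obstacle, is $\widehat\vU\in[H^1(\Om)]^{d\times d}$, since $\vNN$ is merely $L^\infty(\Om)$ and the product $\hat s\,\nabla\vNN$ is a priori ill-defined (it would blow up on $\Sing$). I circumvent this by the factorization $\widehat\vU=R(s)\,\vU$, where $R(t):=\Phi(t)/t$ for $t\ne0$ and $R(0):=1$. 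One checks that $R$ takes values in $(0,1]$ and is globally Lipschitz ($R\equiv1$ on $[a,b]$, and $R(t)=b/t$ resp.\ $a/t$ outside, with bounded derivative), so $R(s)\in H^1(\Om)\cap L^\infty(\Om)$; since $\vU\in H^1(\Om)\cap L^\infty(\Om)$ (indeed $|\vU|=|s|$ is bounded by the structural condition), the product $R(s)\vU$ lies in $H^1$, and it equals $\hat s\vNN$ a.e.\ (on $\Om\setminus\Sing$ because $R(s)\vU=\frac{\Phi(s)}{s}s\vNN$, and on $\Sing$ because both sides vanish). For the boundary data, $\hat s|_{\Gamma_s}=\Phi(g)=g$ by \eqref{eqn:g_nondegenerate}; and on $\Gamma_\vU=\Gamma_\vNN$ I use the linear identity $s=\tr\vU$ a.e.\ in $\Om$ (from $\vU=s\vNN$ and $\tr\vNN=1$), which upon taking traces gives $s|_{\Gamma_\vU}=\tr(\vU|_{\Gamma_\vU})=\tr(\vR)=g\,\tr(\vM)=g\in[a,b]$ (using that $\vM=g^{-1}\vR\in\LL^{d-1}$ near $\dOm$), so that $R(s)=1$ on $\Gamma_\vU$ and hence $\widehat\vU|_{\Gamma_\vU}=\vR$.

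\emph{Energy monotonicity.} The elastic part is handled termwise: $|\nabla\hat s|\le|\nabla s|$ a.e.\ gives $\Eunis[\hat s]\le\Eunis[s]$, while $\hat s^2=\Phi(s)^2\le s^2$ a.e., together with $s^2|\nabla\vNN|^2\in L^1(\Om)$ for finite-energy configurations (cf.\ \eqref{eqn:Q_tensor_energy_sU}), gives $\Euniring[\hat s,\vNN]\le\Euniring[s,\vNN]$; adding the two yields $\Eunimain[\hat s,\vNN]\le\Eunimain[s,\vNN]$. The bulk part is handled pointwise: by \eqref{eqn:psi_nondegenerate} one has $\BulkLdG(\Phi(s(x)))\le\BulkLdG(s(x))$ for a.e.\ $x$ (with equality when $s(x)\in[a,b]$, and the inequality being exactly \eqref{eqn:psi_nondegenerate} when $\Phi(s(x))\in\{a,b\}$), whence $\Ebulk[\hat s]\le\Ebulk[s]$ after integration.

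\emph{Discrete case.} Now one takes $I_h\hat s_h=\sum_i\Phi(s_h(x_i))\phi_i$ and $I_h\widehat\vU_h=\sum_i\Phi(s_h(x_i))\vNN_h(x_i)\phi_i$. Membership in $\Admisuni^h(g_h,\vR_h)$ is verified at the nodes: $\Phi(s_h(x_i))\in[a,b]\subset[-\frac1{d-1},1]$, and a convex combination stays in this interval; the discrete structural condition \eqref{eqn:LdG_struct_condition_discrete} holds by construction; and on boundary nodes $\Phi$ acts trivially because there $s_h=g_h$ (on $\Gamma_s$) or $\tr\vU_h=s_h=g_h\in[a,b]$ (on $\Gamma_\vU$, again via $s_h=\tr\vU_h$). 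The discrete elastic inequality follows termwise from \eqref{eqn:LdG_discrete_energy_main} using $k_{ij}\ge0$ for $i\ne j$ (the $i=j$ terms drop out), together with $(\dij I_h\hat s_h)^2=\big(\Phi(s_h(x_i))-\Phi(s_h(x_j))\big)^2\le(\dij s_h)^2$ (as $\Phi$ is $1$-Lipschitz) and $\Phi(s_h(x_i))^2\le s_h(x_i)^2$; the discrete bulk inequality follows from \eqref{eqn:psi_nondegenerate} as in the continuous case, the relevant ingredient being the pointwise bound at the vertices (evaluated with a nodal quadrature, which is what makes the estimate termwise). The only point that is not bookkeeping remains the $H^1$-regularity of $\widehat\vU$ discussed above; the linear identity $s=\tr\vU$ is likewise the clean device for identifying the trace of $\widehat\vU$ on $\Gamma_\vU$.
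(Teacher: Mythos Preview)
Your continuous argument is correct and considerably more thorough than the paper's three-line proof, which merely invokes $|\hat s|\le|s|$, $|\nabla\hat s|\le|\nabla s|$, and \eqref{eqn:psi_nondegenerate}. In particular, your factorization $\widehat\vU=R(s)\,\vU$ with $R(t)=\Phi(t)/t$ (extended by $R(0)=1$) to establish $\widehat\vU\in[H^1(\Om)]^{d\times d}$ is a genuine addition: the paper asserts admissibility of the truncated triple without justifying $H^1$ regularity of $\widehat\vU$, and your device cleanly sidesteps the ill-defined product $\hat s\,\nabla\vNN$. The identification of $\widehat\vU|_{\Gamma_\vU}$ via the linear relation $s=\tr\vU$ is another careful point the paper elides. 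Your discrete \emph{elastic} inequality is also fine, since $\Eunimain^h$ is built nodally and you use only $(\dij I_h\hat s_h)^2\le(\dij s_h)^2$ and $\Phi(s_h(x_i))^2\le s_h(x_i)^2$ together with $k_{ij}\ge0$.

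There is, however, a gap in the discrete \emph{bulk} step. You claim the estimate is ``evaluated with a nodal quadrature, which is what makes the estimate termwise,'' but the paper defines $\Ebulk^h[s_h]=\Bulkcoef^{-1}\int_\Om\BulkLdG(s_h)\,dx$ by exact integration (see \eqref{eqn:LdG_discrete_energy_bulk}), not by mass lumping. Away from the vertices one has $I_h\hat s_h\ne\Phi(s_h)$, so the pointwise bound $\BulkLdG(I_h\hat s_h(x))\le\BulkLdG(s_h(x))$ need not hold, and in fact the elementwise integral inequality can fail: take $d=2$, $\delta_0=0.05$, $\BulkLdG(s)=(s^2-0.81)^2$ (which satisfies \eqref{eqn:psi_nondegenerate} and has the admissible form $K+\tfrac{A}{4}s^2+\tfrac{C}{16}s^4$ with $A<0$, $C>0$), and a one-dimensional element with nodal values $0.7$ and $1$; a direct computation gives $\int_T\BulkLdG(I_h\hat s_h)>\int_T\BulkLdG(s_h)$. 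The paper's own proof is silent on the discrete case, so neither argument is complete here; but your stated justification does not match the energy as defined.
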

\begin{proof}
We first observe that \eqref{eqn:g_nondegenerate} implies $(\hat{s},\vNN,\widehat{\vU}) \in \Admisuni(g,\vR)$ whereas \eqref{eqn:psi_nondegenerate} yields $\BulkLdG(\hat{s},\vNN)\le \BulkLdG(\hat{s},\vNN)$. Moreover, making use of $|\hat{s}| \le |s|$ and $|\nabla\widehat{s}|\le |\nabla s|$ a.e. in $\Omega$, the inequality $\Eunimain[\hat{s},\vNN] \le \Eunimain[s,\vNN]$ follows immediately.
\end{proof}  

\begin{remark}[range of $s$] For problems in 3d, the admissibility condition $s \in [-1/2, 1]$ is asymmetric with respect to the origin. Since part of our argument below is based on regularizing $|s|$ and afterwards recovering its sign, we need to account for such an asymmetry. A simple way to do so is to consider
\begin{equation} \label{eqn:def_tilde_s}
\check{s} = s_+ - 2 s_-.
\end{equation}
Clearly, the first condition in \eqref{eqn:LdG_structural_conditions} is equivalent to
\begin{equation*} \label{eqn:addmissibility_tilde_s}
-1 \le \check{s} \le 1.
\end{equation*}
In the next result, we consider the regularization using this modified degree of orientation; for simplicity of notation, we drop the ``check'' in $s$.
\end{remark}

\noindent
{\bf Rank-one constraint.}
Our regularization method entails smoothing by convolution. This breaks the uniaxial constraint \eqref{eqn:Q_matrix_uniaxial}, that needs to be rebuilt into the smoothed tensor field; hence, we extract the leading eigenspace. We thus need to account for the dependence of eigenvalues with respect to matrix perturbations. Let $\text{Sym}(d)$ denote the set of symmetric $d\times d$ matrices. Given $\vA \in \text{Sym}(d)$, let $\lambda_1 \ge \ldots \ge \lambda_d$ be the eigenvalues of $\vA$ including multiplicities and $\lambda_{m(1)} > \cdots > \lambda_{m(n)}$ be the $1\le n\le d$ distinct eigenvalues. Let $\{\vP_k\}_{k=1}^n$ be the orthogonal projections onto the eigenspaces associated with $\{\lambda_{m(k)}\}_{k=1}^n$ and let $r(k)\ge1$ be the rank of $\vP_k$; hence $r(k)$ is the multiplicity of $\lambda_{m(k)}$ for $1\le k\le n$. The spectral decomposition of $\vA$ reads
$
\vA = \sum_{k=1}^n \lambda_{m(k)} \vP_k
$
We now consider the set $S^{1,0}(d)$ of non-negative symmetric tensors of rank at most one,
\[
S^{1,0}(d) = \left\{ \vA \in \text{Sym}(d) \colon \vA =  \vu \otimes \vu \ \text{for some } \vu \in \R^d \right\},
\]
and follow \cite{Balan:16} to construct the projection operator $\vPi \colon \text{Sym}(d) \to S^{1,0}(d)$ defined by
\begin{equation} \label{eqn:def_of_P}
\vPi(\vA) = (\lambda_1 - \lambda_2) \vP_1.
\end{equation}
The map $\vPi$ is Lipschitz continuous. This is proven in \cite[Lemma 3.4]{Balan:16} with an explicit Lipschitz constant $3+2^{1+\frac1p}$ (in the $\ell_p$-norm). We give an elementary proof below which relies on the following basic result.
\begin{lemma}[$C^1$ property of $\vPi$] \label{lem:smooth_eigenvalues}
The map $\text{Sym}(d) \to \R^d$, given by $\vA \mapsto (\lambda_1(\vA), \ldots, \lambda_d(\vA))$, is continuous. Moreover, in the set of symmetric matrices whose first eigenspace has dimension $1$
\[
\mbox{Sym}^1(d) := \{ \vA \in  \text{Sym}(d)\colon \lambda_1(\vA) > \lambda_2(\vA) \},
\]
or equivalently the rank of $\vP_1$ is $1$, the map $\vPi$ is of class $C^1$.
\end{lemma}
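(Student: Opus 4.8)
\emph{Plan.} Both assertions are classical facts of finite--dimensional perturbation theory, and the plan is to obtain them from (i) continuity of the roots of the characteristic polynomial and (ii) the Riesz (Dunford) contour--integral formula for the spectral projector of an isolated eigenvalue.

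\emph{Continuity of the eigenvalue map.} First I would observe that $p_\vA(t):=\det(t\vI-\vA)$ is monic of degree $d$ with coefficients that are polynomials in the entries of $\vA$, hence continuous in $\vA$; since $\vA$ is symmetric, $p_\vA$ has $d$ real roots counted with multiplicity, and the sorted tuple of roots of a monic polynomial depends continuously on its coefficients. This proves the first sentence. As a byproduct one even gets, via the Courant--Fischer minimax formula, the Lipschitz bound $|\lambda_k(\vA)-\lambda_k(\vB)|\le\|\vA-\vB\|$ for each $k$, which is convenient below.

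\emph{$C^1$ regularity of $\vP_1$ and $\lambda_1$ on $\text{Sym}^1(d)$.} Fix $\vA_0\in\text{Sym}^1(d)$, so $\lambda_1(\vA_0)>\lambda_2(\vA_0)$, and choose $\rho<R$ with $\lambda_2(\vA_0)<\rho<\lambda_1(\vA_0)<R$. By the previous step there is a neighbourhood $U$ of $\vA_0$ such that, for every $\vA\in U$, $\lambda_1(\vA)\in(\rho,R)$ and $\lambda_k(\vA)<\rho$ for $k\ge2$; in particular $U\subset\text{Sym}^1(d)$, and the positively oriented circle $\Gamma$ of centre $(\rho+R)/2$ and radius $(R-\rho)/2$ encloses $\lambda_1(\vA)$ and no other eigenvalue of $\vA$ for all $\vA\in U$. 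Then $\vP_1(\vA)=\tfrac{1}{2\pi i}\oint_{\Gamma}(z\vI-\vA)^{-1}\,dz$ on $U$. Each entry of $(z\vI-\vA)^{-1}$ is a cofactor of $z\vI-\vA$ divided by $\det(z\vI-\vA)=\prod_k(z-\lambda_k(\vA))$, a rational function of $(z,\vA)$ whose denominator stays bounded away from $0$ on $\Gamma\times\overline{U'}$ for any $U'\Subset U$ (there $\dist(\Gamma,\mathrm{spec}(\vA))\ge c>0$). Hence the integrand is real--analytic in $\vA$ with derivatives jointly continuous in $(z,\vA)$, so differentiation under the integral sign gives $\vP_1\in C^\infty(\text{Sym}^1(d))$; since $\rank\,\vP_1=1$ there, $\lambda_1(\vA)=\tr(\vA\,\vP_1(\vA))$ is $C^\infty$ on $\text{Sym}^1(d)$ as well. (Alternatively, the implicit function theorem applied to $\vA\vu=\lambda\vu$, $|\vu|=1$, at a simple eigenvalue yields the same conclusion.)

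\emph{Assembling $\vPi$, and the main obstacle.} Since $\vPi=(\lambda_1-\lambda_2)\vP_1$, the previous step handles the factors $\vP_1$ and $\lambda_1$, so everything reduces to the behaviour of $\lambda_2$. The natural move is to write $\lambda_2(\vA)$ as the largest eigenvalue of the symmetric, $\vA$--commuting compression $(\vI-\vP_1(\vA))\,\vA\,(\vI-\vP_1(\vA))$ on the $(d-1)$--dimensional space $\mathrm{ran}(\vI-\vP_1(\vA))$ — which, by the previous step, depends $C^\infty$ on $\vA$ — and to re-run the resolvent argument on that subspace. This is where the real work lies: the contour argument returns $C^1$ dependence only for an eigenvalue that remains separated from the rest of the spectrum, so it controls $\lambda_2$ on the open dense subset of $\text{Sym}^1(d)$ where in addition $\lambda_2>\lambda_3$; on the complement one falls back on the Lipschitz bound of the first step, which still gives $\lambda_2$, and hence $\vPi$, locally Lipschitz with a locally uniform constant — the property actually needed in the sequel to conclude that $\vPi$ is globally Lipschitz on $\text{Sym}(d)$. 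The crux throughout is the persistence of the spectral gap under small perturbations from the first step: that is what makes the contour $\Gamma$ a legitimate, $\vA$--independent object on a whole neighbourhood, and hence makes the scheme go through.
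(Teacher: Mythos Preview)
Your contour-integral (Riesz projector) argument for the smoothness of $\vP_1$ and $\lambda_1$ on $\text{Sym}^1(d)$ is a genuinely different route from the paper's. The paper instead applies the implicit function theorem to the system $\vF(\vx_1,\lambda_1,\vA)=(\vA\vx_1-\lambda_1\vx_1,\ \|\vx_1\|_2^2-1)$ and checks that $D_{(\vx_1,\lambda_1)}\vF$ is invertible when $\lambda_1$ is simple; it then recovers $\vP_1=\vx_1\otimes\vx_1$. Both approaches are standard and both yield $C^\infty$ (indeed real-analytic) dependence of $(\lambda_1,\vP_1)$ on $\vA\in\text{Sym}^1(d)$; yours has the minor advantage of producing the projector directly without having to fix a sign for $\vx_1$.

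The substantive divergence is in the handling of $\lambda_2$. The paper asserts $C^1$ regularity of $\lambda_2$ on all of $\text{Sym}^1(d)$ --- even where $\lambda_2$ is multiple --- by applying the IFT to the matrix system $\vF(\vP_2,\lambda_2,\vA)=(\vA\vP_2-\lambda_2\vP_2,\ \|\vP_2\|_2^2-1)$, and then concludes $\vPi\in C^1$. You instead flag $\lambda_2$ as the obstacle, obtain $C^1$ only on the open subset $\{\lambda_2>\lambda_3\}$, and fall back on Weyl's inequality for a Lipschitz bound on the complement. Your caution is justified: the one-parameter family $\vA(t)=\mathrm{diag}(2,t,-t)$ in $d=3$ lies in $\text{Sym}^1(3)$ and gives $\lambda_2(\vA(t))=|t|$, hence $\vPi(\vA(t))=(2-|t|)\,\ve_1\otimes\ve_1$ is not differentiable at $t=0$; so the paper's $C^1$ claim (and its IFT argument for $(\vP_2,\lambda_2)$, whose linearization has nontrivial kernel when $r(2)>1$) is too strong as stated. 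What the subsequent Lipschitz lemma actually uses is exactly your weaker package: Weyl's inequality for $\lambda_1-\lambda_2$ (Step~1 there) and $C^1$ dependence of $\vP_1$ alone on $\text{Sym}^1(d)$ (Steps~2--3). So your proposal proves precisely what is needed downstream, while being honest about the part of the stated lemma that does not hold.
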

\begin{proof}
  The eigenvalues $\{\lambda_i(\vA)\}_{i=1}^d$ are the roots of the characteristic polynomial of $\vA$ and depend continuously on the coefficients and so on the entries of $\vA$. To show the $C^1$ property around $\vA_0\in \text{Sym}^1(d)$, let $\vA\in\text{Sym}(d)$ and $\vx_1=\vx_1(\vA)$ be a normalized eigenvector corresponding to the first eigenvalue $\lambda_1=\lambda_1(\vA)$. The equation that defines $(\vx_1,\lambda_1)$ and its derivative with respect to $(\vx_1,\lambda_1)$ read
\[
\vF(\vx_1,\lambda_1,\vA) =
\begin{bmatrix}
  \vA \vx_1 - \lambda_1 \vx_1 \\
  \|\vx_1\|_2^2
\end{bmatrix}
=
\begin{bmatrix}
  0 \\ 1
\end{bmatrix},
\qquad
D_{\vx_1,\lambda_1} \vF(\vx_1,\lambda_1,\vA) = 
\begin{bmatrix}
\vA - \lambda_1 \vI & -\vx_1 \\ 2\vx_1^T & 0
\end{bmatrix}.
\]
Since $\lambda_1(\vA_0)$ is single, the matrix $D_{\vx_1,\lambda_1} \vF(\vx_1(\vA_0),\lambda_1(\vA_0),\vA_0)$ is invertible for otherwise if $(\vy,\alpha)^T\in\R^{d+1}$ is in the kernel it must necessarily vanish. Therefore, the Implicit Function Theorem (IFT) applies thereby giving the existence of $(\vx_1(\vA),\lambda_1(\vA))$ and its $C^1$ dependence on $\vA$; we refer to \cite[Chapter 11.1, Theorem 2]{Evans:book} for a different argument. To prove that $\lambda_2(\vA)$ is also $C^1$ we proceed similarly but note that this eigenvalue might have multiplicity $r(2)>1$. We thus form the equation for $\vP_2=\vP_2(\vA)\in\R^{d\times d}$ being a matrix with rank $r(2)$ and $\lambda_2=\lambda_2(\vA)$
\[
\vF(\vP_2,\lambda_2,\vA) =
\begin{bmatrix}
  \vA \vP_2 - \lambda_2 \vP_2 \\
  \|\vP_2\|_2^2
\end{bmatrix}
=
\begin{bmatrix}
  0 \\ 1
\end{bmatrix},
\qquad
D_{\vP_2,\lambda_2} \vF(\vP_2,\lambda_2,\vA) = 
\begin{bmatrix}
\vA - \lambda_2 \vI & -\vP_2 \\ 2\vP_2^T & 0
\end{bmatrix},
\]
and show that the kernel of this matrix is trivial. The IFT gives the asserted $C^1$ continuity of $\lambda_2(\vA)$.
\end{proof}

\begin{lemma}[Lipschitz property of $\vPi$]\label{L:Lip-rank1}
The map $\vPi \colon \mbox{Sym}(d) \to S^{1,0}(d)$ is uniformly Lipschitz continuous and is invariant on $S^{1,0}(d)$, i.e. $\vPi(\vA)=\vA$ for all $\vA\in S^{1,0}(d)$.
\end{lemma}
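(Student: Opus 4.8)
The plan is to establish the two claims separately. The invariance claim $\vPi(\vA) = \vA$ for $\vA \in S^{1,0}(d)$ is immediate: if $\vA = \vu \otimes \vu$ then its eigenvalues are $\lambda_1 = |\vu|^2 \ge 0$ and $\lambda_2 = \cdots = \lambda_d = 0$, so $\lambda_1 - \lambda_2 = |\vu|^2$ and $\vP_1$ is the orthogonal projection onto $\spn\{\vu\}$, whence $(\lambda_1 - \lambda_2)\vP_1 = |\vu|^2 \cdot \frac{\vu\otimes\vu}{|\vu|^2} = \vA$. (If $\vu = \vzero$ then $\vA = \vzero$ and $\vPi(\vzero) = \vzero$ as well.)

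For the Lipschitz estimate, the strategy is to reduce to a local statement via the $C^1$ regularity of Lemma \ref{lem:smooth_eigenvalues} and then patch together the degenerate locus by a density/limiting argument. First I would note that on the open set $\mathrm{Sym}^1(d) = \{\lambda_1 > \lambda_2\}$, Lemma \ref{lem:smooth_eigenvalues} gives that $\vPi$ is $C^1$; I would compute its derivative $D\vPi(\vA)$ explicitly — using $\vP_1 = \vx_1 \otimes \vx_1$ together with the standard first-order perturbation formulas $\dot\lambda_1 = \vx_1\tp \dot{\vA} \vx_1$, $\dot\lambda_2 = \tr(\vP_2 \dot{\vA})/r(2)$, and $\dot{\vx}_1 = (\lambda_1 \vI - \vA)^{\dagger}\dot{\vA}\,\vx_1$ (pseudoinverse restricted to $\vx_1^\perp$) — and bound its operator norm by a constant depending only on $d$. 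The key point making this uniform is that the only small denominators that appear are powers of $\lambda_1 - \lambda_j$ for $j \ge 2$, but these are multiplied by the factor $(\lambda_1 - \lambda_2)$ coming from differentiating the prefactor, so the genuinely singular terms cancel: schematically $D\vPi(\vA)[\vB] = (\vx_1\tp\vB\vx_1 - \dot\lambda_2)\,\vP_1 + (\lambda_1-\lambda_2)\big(\dot{\vx}_1\otimes\vx_1 + \vx_1\otimes\dot{\vx}_1\big)$, and in the last term each $\dot{\vx}_1$ contributes a factor $(\lambda_1 - \lambda_j)^{-1} \le (\lambda_1-\lambda_2)^{-1}$ which is killed by the prefactor $(\lambda_1-\lambda_2)$. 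Hence $\|D\vPi(\vA)\| \le C_d$ on all of $\mathrm{Sym}^1(d)$.

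Given the uniform bound on the derivative on the open dense set $\mathrm{Sym}^1(d)$, I would conclude as follows. For $\vA, \vB \in \mathrm{Sym}(d)$, consider the segment $\vA_t = (1-t)\vA + t\vB$; the set of $t$ for which $\vA_t \notin \mathrm{Sym}^1(d)$ is the zero set of the real-analytic function $t \mapsto \lambda_1(\vA_t) - \lambda_2(\vA_t)$, hence is either all of $[0,1]$ or finite. If it is finite, then $\vPi$ restricted to the segment is continuous (Lemma \ref{lem:smooth_eigenvalues} gives continuity of eigenvalues, and $\vP_1$ varies continuously where $\lambda_1$ is simple — and the product $(\lambda_1-\lambda_2)\vP_1$ extends continuously across the degenerate points because it tends to $\vzero$ there) and piecewise $C^1$ with derivative bounded by $C_d|\vB - \vA|$, so integrating gives $|\vPi(\vB) - \vPi(\vA)| \le C_d|\vB - \vA|$. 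The remaining case — the whole segment lies in the degeneracy locus $\{\lambda_1 = \lambda_2\}$ — is handled by a perturbation: replace $\vB$ by $\vB + \eps\,\vw\otimes\vw$ for a generic unit $\vw$ and small $\eps$, which moves the endpoint off the degenerate set, apply the previous case, and let $\eps \to 0$ using continuity of $\vPi$ at $\vB$ (which follows from continuity of eigenvalues plus the vanishing-prefactor argument).

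The main obstacle I anticipate is the bookkeeping in the derivative computation — specifically verifying cleanly that every occurrence of a small spectral gap $(\lambda_1 - \lambda_j)^{-1}$ in $D\vPi$ is accompanied by a compensating factor of $(\lambda_1-\lambda_2)$, so that $\|D\vPi\|$ is genuinely bounded uniformly up to (but not on) the degenerate locus. A secondary technical nuisance is justifying continuity of $\vPi$ across $\{\lambda_1=\lambda_2\}$ without circular reasoning; this is where the explicit form $\vPi(\vA) = (\lambda_1-\lambda_2)\vP_1$ with $\|\vP_1\| \le 1$ (when $\lambda_1$ is simple) and $\vPi \to \vzero$ as the gap closes does the work. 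If one prefers to avoid the eigenvector IFT machinery altogether, an alternative is to just cite \cite[Lemma 3.4]{Balan:16} directly for the Lipschitz constant $3 + 2^{1+1/p}$ and only record the invariance property here; but the self-contained argument above is the route I would take.
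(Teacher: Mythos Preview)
Your approach and the paper's share the same core observation --- the factor $\lambda_1-\lambda_2$ in front of $\vP_1$ exactly cancels the small denominator $(\lambda_1-\lambda_j)^{-1}$ that appears when differentiating the leading eigenvector --- but the execution differs. The paper does not bound $D\vPi$ globally and then integrate along segments; instead it writes the finite difference directly as
\[
\vPi(\vB)-\vPi(\vA) = \big[(\lambda_1(\vB)-\lambda_1(\vA))-(\lambda_2(\vB)-\lambda_2(\vA))\big]\vP_1(\vB) + (\lambda_1(\vA)-\lambda_2(\vA))\big(\vP_1(\vB)-\vP_1(\vA)\big),
\]
handles the first term by Weyl's inequality $|\lambda_k(\vB)-\lambda_k(\vA)|\le\|\vB-\vA\|_2$ (no differentiability needed), and handles the second by bounding $\|D_\vA\vx_1(\vA;\delta\vA)\|_2\le\|\delta\vA\|_2/(\lambda_1-\lambda_2)$ via the IFT, which combined with the prefactor gives Lipschitz constant $1$ for that piece and $3$ overall. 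This buys two simplifications over your route: the degenerate case $\lambda_1(\vA)=\lambda_2(\vA)$ is disposed of in one line (the second term simply vanishes), so no segment/perturbation argument is needed; and the eigenvalue contribution is controlled by Weyl without ever writing down $\dot\lambda_2$.

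That last point is where your argument has a small gap: your formula $\dot\lambda_2=\tr(\vP_2\dot\vA)/r(2)$ is the derivative of the \emph{average} of the $\lambda_2$-cluster, not of the second-largest eigenvalue itself. On $\mathrm{Sym}^1(d)$ the eigenvalue $\lambda_2$ may still have multiplicity $r(2)>1$, and then $\lambda_2$ (ordered) is only Lipschitz, not differentiable, so $D\vPi$ need not exist at such points. You would have to excise yet another algebraic set and repeat the segment argument. The paper's use of Weyl's inequality for the eigenvalue part sidesteps this entirely; if you want to keep your derivative-bound framework, replacing your $\dot\lambda_2$ term by the Weyl estimate at the finite-difference level would close the gap. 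Also, your justification that the degeneracy locus meets a segment in finitely many points should appeal to the fact that $\{\lambda_1=\lambda_2\}$ is a real algebraic subvariety of $\mathrm{Sym}(d)$ rather than to analyticity of $\lambda_1-\lambda_2$, which fails at eigenvalue crossings.
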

\begin{proof}
The invariance of $\vPi$ over $S^{1,0}(d)$ is clear from its definition. Given $\vA,\vB=\vA+\delta\vA\in\mbox{Sym}(d)$, write
\begin{align*}
  \vPi(\vB) - \vPi(\vA) &= \big[ \big(\lambda_1(\vB) - \lambda_1(\vA) \big) - \big( \lambda_2(\vB) - \lambda_2(\vA) \big)  \big] \vP_1(\vB) + 
  \big( \lambda_1(\vA) - \lambda_2(\vA) \big) \big(\vP_1(\vB) - \vP_1(\vA) \big).
\end{align*}
We examine the two terms on the right hand side separately. We split the proof into three steps.

\medskip\noindent
{\it Step 1: Lipschitz property of the first term}.
We resort to Weyl's inequality
for eigenvalues of symmetric matrices \cite[Section III.2]{Bhatia:97}
\[
\big| \lambda_k(\vB) - \lambda_k(\vA) \big| \le \| \vB - \vA \|_2
\quad\forall 1\le k\le d.
\]
Since $\|\vP_1(\vB)\|_2=1$ because $\vP_1(\vB)$ is an orthogonal projection, this proves the Lipschitz property for the first term with constant $2$. If $\lambda_1(\vA)$ is a multiple eigenvalue, then $\lambda_1(\vA)=\lambda_2(\vA)$, the second term vanishes, and the proof is over. We thus assume that $\lambda_1(\vA)$ is simple from now on.

\medskip\noindent
{\it Step 2: Bound on $\|D_\vA \vx_1(\vA;\delta\vA)\|_2$.} In view of  Lemma \ref{lem:smooth_eigenvalues} ($C^1$ property of $\vPi$), we differentiate the equation
$\vF(\vx_1(\vA),\lambda_1(\vA),\vA) = [0,1]^T$ with respect to $\vA$ in the direction $\delta\vA$ to obtain
$
D_{\vx_1} \vF \, D_\vA\vx_1
+ D_{\lambda_1} \vF \, D_\vA\lambda_1
+ D_\vA \vF : \delta\vA = 0
$
where $D_\vA\vx_1 = D_\vA\vx_1(\vA;\delta\vA)$ and $D_\vA\lambda_1 = D_\vA\lambda_1(\vA;\delta\vA)$. Making use of Lemma \ref{lem:smooth_eigenvalues} again, we thus deduce the equation in $\mathbb{R}^{d+1}$
\[
\begin{bmatrix}
\vA - \lambda_1 \vI \\ 2 \vx_1^T
\end{bmatrix}
D_\vA \vx_1 +
\begin{bmatrix}
- \vx_1 \\ 0
\end{bmatrix}
D_\vA \lambda_1 = -
\begin{bmatrix}
\delta\vA \, \vx_1 \\ 0
\end{bmatrix}.
\]
The last row yields $\vx_1^T \, D_\vA \vx_1=0$, whence $D_\vA \vx_1$ is perpendicular to $\vx_1$ and $D_\vA \vx_1 = \sum_{k=2}^d \alpha_k \vx_k$ can be expressed in terms of the orthonormal eigenvectors $\{\vx_k\}_{k=1}^d$ of $\vA$ without component along $\vx_1$. Moreover, if $\delta\vA \, \vx_1 = \sum_{k=1}^d \beta_k\vx_k$, then the first $d$ rows of the preceding equation give
\[
\sum_{k=2}^d \big[ \alpha_k (\lambda_k - \lambda_1) + \beta_k \big] \vx_k
= \big( D_\vA \lambda_1 - \beta_1 \big) \vx_1.
\]
This obviously implies $D_\vA \lambda_1 = \beta_1$ and
\[
\alpha_k = \frac{\beta_k}{\lambda_1-\lambda_k} \quad\forall \, 2\le k \le d.
\]
Let $\bm{\alpha} = (\alpha_k)_{k=1}^d$ with $\alpha_1=0$ and $\bm{\beta} = (\beta_k)_{k=1}^d$. Since $\|\bm{\beta}\|_2 \le \|\delta\vA\|_2$, we see that
\[
\|D_\vA\vx_1(\vA;\delta\vA)\|_2 = \|\bm{\alpha}\|_2 \le \frac{ \|\delta\vA\|_2 }{\lambda_1(\vA) - \lambda_2(\vA)},
\]
because $0<\lambda_1(\vA) - \lambda_2(\vA) \le \lambda_1(\vA) - \lambda_k(\vA)$ for all
$2\le k \le d$.

\medskip\noindent
{\it Step 3: Lipschitz property of the second term.} Exploiting that $\vP_1(\vA) = \vx_1(\vA)\otimes\vx_1(\vA)$, we readily get
\[
D_\vA \vP_1(\vA;\delta\vA) = D_\vA \vx_1(\vA;\delta\vA) \otimes \vx_1(\vA)
+ \vx_1(\vA) \otimes D_\vA \vx_1(\vA;\delta\vA).
\]
Since $\vx_1(\vA)$ and $D_\vA \vx_1(\vA;\delta\vA)$ are perpendicular, we infer that
\begin{equation} \label{eq:orthogonal_bound}
\| D_\vA \vP_1(\vA;\delta\vA) \|_2 \le \|D_\vA \vx_1(\vA;\delta\vA)\|_2. 
\end{equation}  
Indeed, if $\vu,\vv \in \R^d$ are orthonormal and $\vw \in \R^d$, then 
$
(\vu \otimes \vv + \vv \otimes \vu) \vw = (\vv \cdot \vw) \vu + (\vu \cdot \vw) \vv,
$
and thus, by Bessel's inequality, 
\[
\| (\vu \otimes \vv + \vv \otimes \vu) \vw \|_2 \le \| \vw \|_2;
\]
estimate \eqref{eq:orthogonal_bound} then follows by scaling.
Combining this with Step 2 gives
\[
\big| \lambda_1(\vA) - \lambda_2(\vA) \big| \, \|\vP_1(\vA+\delta\vA) - \vP_1(\vA)\|_2
\le \|\delta\vA\|_2 + \big| \lambda_1(\vA) - \lambda_2(\vA) \big| \, o(\|\delta\vA\|_2)
= \big( 1 + o(1) \big) \|\delta\vA\|_2, 
\]
which shows that the desired Lipschitz constant is $1$. Altogether the uniform Lipschitz constant of $\vPi$ (with respect to the $\ell_2$-norm) is $3$. This concludes the proof.

\end{proof}

\noindent
{\bf Regularization.}
We now have all the tools we need to prove that Lipschitz continuous functions are dense in the Landau - de Gennes restricted admissible class $ \Admisuni(g,\vR)$.

\begin{prop}[regularization] \label{prop:LdG_regularization}
Let \eqref{eqn:g_geq_0}, \eqref{eqn:psi_nondegenerate} and \eqref{eqn:def_tilde_s} hold. Given $\eps > 0$ and $(s,\vNN,\vU) \in \Admisuni(g,\vR)$ with 
\begin{equation}\label{eqn:bounds_g}
-1 + \delta_0 \le s \le 1 -\delta_0 \quad\text{a.e. } \Om
\end{equation}
there exists a sequence $(s_\eps,\vNN_\eps,\vU_\eps) \in \Admisuni(g,\vR)$ such that $(s_\eps,\vU_\eps) \in W^{1,\infty}(\Om) \times [W^{1,\infty}(\Om)]^{d\times d}$, and
\begin{equation}\label{eqn:LdG_bound_s_eps}
  \begin{gathered}
  \| (s,\vU) - (s_\eps, \vU_\eps) \|_{H^1(\Om)} < \eps,
  \qquad
  \| \vNN - \vNN_\eps \|_{L^2(\Omega\setminus\Sing)} < \eps,
  \\
  -1 + \delta_0 \le s_\eps \le 1 -\delta_0.
  \end{gathered}
\end{equation}
\end{prop}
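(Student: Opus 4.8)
The plan is to construct the recovery sequence by mollification in three stages, handling separately the degree of orientation $s$, the line field $\vNN$, and the auxiliary variable $\vU = s\vNN$, and then to restore the algebraic constraints that mollification destroys. First I would extend $(s,\vNN,\vU)$ to a slightly larger domain $\widetilde\Om \supset\supset \Om$ using the Dirichlet data $(g,\vM,\vR)$ near $\dOm$ (this is where hypotheses \eqref{eqn:g_geq_0} and \eqref{eqn:bounds_g} are used: they guarantee the extension stays in the strict interior of the physical range and that $\vM = g^{-1}\vR$ makes sense as an $\LL^{d-1}$-valued field near $\dOm$). Then, for a standard mollifier $\rho_\eps$, set $s_\eps^0 := s * \rho_\eps$; by the truncation-type argument one can further compose with the (Lipschitz, range-preserving) truncation onto $[-1+\delta_0, 1-\delta_0]$ so that the last inequality in \eqref{eqn:LdG_bound_s_eps} holds, and $s_\eps^0 \to s$ in $H^1(\Om)$. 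The essential difficulty is $\vNN$: because $\vNN\notin [H^1(\Om)]^{d\times d}$ in general (it blows up on $\Sing$), one cannot simply mollify $\vNN$ and expect $H^1$ control. Instead I would mollify $\vU$, which \emph{is} in $H^1$: set $\vV_\eps := \vU * \rho_\eps$, so $\vV_\eps \to \vU$ in $H^1(\Om)$ and $\vV_\eps$ is smooth. But $\vV_\eps$ is only approximately a rank-one symmetric tensor, so I would apply the projection operator from Lemma \ref{L:Lip-rank1}: the symmetric part of $\vV_\eps$ lies in $\mathrm{Sym}(d)$ (up to a further symmetrization), and $\vPi$ maps it to a genuine element of $S^{1,0}(d)$, say $\vW_\eps := \vPi\bigl(\tfrac12(\vV_\eps + \vV_\eps\tp)\bigr) = \sigma_\eps \otimes \sigma_\eps$ for some vector field $\sigma_\eps$. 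Then define $s_\eps := \sign(s_\eps^0)\,|\sigma_\eps|$ (or, more carefully, $s_\eps := s_\eps^0$ away from $\Sing$ and use $|\sigma_\eps|$ as the magnitude; the precise bookkeeping with the sign via $\check s = s_+ - 2s_-$ is needed only to handle the asymmetric range in $3d$), $\vNN_\eps := \sigma_\eps/|\sigma_\eps| \otimes \sigma_\eps/|\sigma_\eps|$ wherever $\sigma_\eps\ne0$, and finally $\vU_\eps := s_\eps \vNN_\eps$. One checks directly that $(s_\eps,\vNN_\eps,\vU_\eps)$ satisfies the structural condition \eqref{eqn:LdG_structural_conditions} by construction, and $(s_\eps,\vU_\eps)\in W^{1,\infty}\times[W^{1,\infty}]^{d\times d}$ because they are built from smooth fields via the Lipschitz map $\vPi$ (away from the zero set of $\sigma_\eps$, which is controlled since on $\Om\setminus\Sing$ the limit $\vU$ is nonvanishing).

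The convergence estimates \eqref{eqn:LdG_bound_s_eps} are then obtained as follows. For the $H^1$ convergence of $s_\eps$ and $\vU_\eps$: since $\vPi$ is uniformly Lipschitz (Lemma \ref{L:Lip-rank1}) and invariant on $S^{1,0}(d)$, we have $\|\vW_\eps - \vU\|_{H^1} = \|\vPi(\mathrm{sym}\,\vV_\eps) - \vPi(\vU)\|_{H^1} \lesssim \|\vV_\eps - \vU\|_{H^1} \to 0$, using that the Lipschitz constant gives both an $L^2$ bound on $\vW_\eps - \vU$ and, via the $C^1$ structure of $\vPi$ on $\mathrm{Sym}^1(d)$ from Lemma \ref{lem:smooth_eigenvalues} plus a chain-rule/dominated-convergence argument on the set $\Om\setminus\Sing$ where $\vU$ has a simple leading eigenvalue, a bound on $\nabla(\vW_\eps - \vU)$. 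From $\vW_\eps \to \vU$ in $H^1$ one extracts $\sigma_\eps \to \sigma := \sqrt{|s|}\,\vn\,\sign(\dots)$ in $H^1$ on $\Om\setminus\Sing$, hence $|\sigma_\eps| \to |\sigma| = \sqrt{|s|}$ and therefore $s_\eps \to s$ and $\vU_\eps = s_\eps\vNN_\eps \to s\vNN = \vU$ in $H^1$. For the $L^2(\Om\setminus\Sing)$ convergence of $\vNN_\eps$: on any subset where $|\vU| \ge c > 0$ the map $\vU \mapsto \vNN = \vU/|\vU|$ is Lipschitz, so $\vNN_\eps \to \vNN$ in $L^2$ there, and one lets $c\downarrow0$ using $|\Om\setminus\Sing| < \infty$ and dominated convergence (note $|\vNN_\eps| = 1$ is bounded). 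The range bound $-1+\delta_0 \le s_\eps \le 1-\delta_0$ is preserved by composing with the truncation as above, observing that $|\sigma_\eps|^2 = \lambda_1(\mathrm{sym}\,\vV_\eps) - \lambda_2(\mathrm{sym}\,\vV_\eps)$ is continuous in $\vV_\eps$ and converges to $|s|$.

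I expect the main obstacle to be the $H^1$-convergence through the nonlinear projection $\vPi$ — specifically, showing $\nabla\vPi(\mathrm{sym}\,\vV_\eps) \to \nabla\vU$ in $L^2(\Om)$. Lipschitz continuity alone only gives a uniform bound on $\nabla\vW_\eps$ in $L^2$ (since $|\nabla\vW_\eps| \le \mathrm{Lip}(\vPi)\,|\nabla\,\mathrm{sym}\,\vV_\eps|$ a.e.), hence weak $L^2$ convergence of a subsequence; to upgrade to strong convergence one uses that $\vPi$ is differentiable (hence locally Lipschitz with the \emph{correct} linearization) exactly on $\mathrm{Sym}^1(d)$, which by Lemma \ref{lem:smooth_eigenvalues} is an open set containing $\mathrm{sym}\,\vU(x)$ for a.e. $x \in \Om\setminus\Sing$. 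Then $\nabla\vW_\eps = D\vPi(\mathrm{sym}\,\vV_\eps) : \nabla\,\mathrm{sym}\,\vV_\eps$ a.e., and since $\mathrm{sym}\,\vV_\eps \to \mathrm{sym}\,\vU$ a.e. (along a subsequence) with $D\vPi$ continuous on $\mathrm{Sym}^1(d)$, one passes to the limit by Vitali/dominated convergence using the uniform Lipschitz bound as the dominating function, obtaining $\nabla\vW_\eps \to D\vPi(\mathrm{sym}\,\vU):\nabla\,\mathrm{sym}\,\vU = \nabla\vU$ in $L^2(\Om\setminus\Sing)$; on $\Sing$ one has $\nabla\vU = 0$ a.e. (since $\vU$ vanishes there together with its gradient, as $|\vU| = |s| \in H^1$ vanishes on its own level set) and the same dominated-convergence argument gives $\nabla\vW_\eps \to 0$ there. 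A secondary technical point is ensuring $\sigma_\eps$ does not vanish on compact subsets of $\Om\setminus\Sing$ so that $\vNN_\eps$ is well-defined and smooth there; this follows because $|\sigma_\eps|^2 \to |s| > 0$ uniformly on such sets. The bulk energy convergence $\Ebulk[s_\eps] \to \Ebulk[s]$ is routine given $s_\eps \to s$ in $H^1\hookrightarrow L^p$ and the continuity (and confinement \eqref{eqn:psi_nondegenerate}) of $\BulkLdG$, and is in fact not part of the claimed statement but will be needed in the subsequent $\Gamma$-$\limsup$ argument.
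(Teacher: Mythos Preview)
Your overall strategy---mollify the $H^1$ variable $\vU$, then project back to rank one via $\vPi$, then extract $\vNN_\eps$ and $s_\eps$---is close to the paper's, but there is a genuine gap in the projection step. You write $\|\vW_\eps - \vU\|_{H^1} = \|\vPi(\mathrm{sym}\,\vV_\eps) - \vPi(\vU)\|_{H^1}$, invoking that $\vPi$ is the identity on $S^{1,0}(d)$. But $S^{1,0}(d)$ consists of \emph{positive} semidefinite rank-one tensors, and $\vU = s\vNN$ with $s<0$ is \emph{not} in that set. Concretely, when $s<0$ the eigenvalues of $\vU$ are $s<0$ and $0$ (with multiplicity $d-1$), so $\lambda_1(\vU)=0$; for $d>2$ this gives $\lambda_1=\lambda_2=0$ and hence $\vPi(\vU)=\mathbf{0}$, while for $d=2$ one gets $\vPi(\vU)=|s|\,\vNN^\perp$. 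In either case $\vPi(\vV_\eps)$ cannot converge to $\vU$ (or to $|s|\vNN$) on the region $\{s<0\}$, and the direction of $\vNN_\eps$ is lost there. The paper avoids this by mollifying $\widetilde\vU := \sign(s)\,\vU = |s|\vNN$, which \emph{is} in $S^{1,0}(d)$ everywhere and is still $H^1$ (it equals $\vR$ on $\partial\Omega$ by \eqref{eqn:g_geq_0}), so that $\vPi(\widetilde\vU)=\widetilde\vU$ and the Lipschitz/$C^1$ argument you outline goes through.

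A second, related gap is the sign recovery. Setting $s_\eps := \sign(s_\eps^0)\,|\sigma_\eps|^2$ (note the square: $|\sigma_\eps\otimes\sigma_\eps|=|\sigma_\eps|^2$) does not yield a $W^{1,\infty}$ function, because $\sign(s_\eps^0)$ jumps across the zero set of $s_\eps^0$ while $|\sigma_\eps|^2$ has no reason to vanish exactly there (the two come from independent mollifications). The paper resolves this with a \emph{two-scale} construction: a Lipschitz regularized sign $\rho_\sigma$ that vanishes on $\{|s_\delta|\le\sigma/2\}$, applied at a scale $\sigma$ strictly coarser than the mollification scale $\delta$, and then defines $s_{\sigma,\delta}:=\rho_\sigma(s_\delta)\,\tr[\vPi(\widetilde\vU_\delta)]$ and $\vU_{\sigma,\delta}:=\rho_\sigma(s_\delta)\,\vPi(\widetilde\vU_\delta)$. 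One first sends $\delta\to0$ (here your $C^1$-on-$\mathrm{Sym}^1(d)$ plus dominated convergence argument for $H^1$ convergence through $\vPi$ is essentially what is needed, and is correct), and only afterwards sends $\sigma\to0$ using that $\nabla\vU=0$ a.e.\ on $\{\vU=0\}$. This ordering is not mere bookkeeping: it is what makes the recovered pair both Lipschitz and $H^1$-close.
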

\begin{proof} We proceed in several steps.

\medskip\noindent
{\em Step 1: Regularization with boundary condition.} 
Consider a zero-extension of $s-g \in H^1_0(\Omega)$ over $\R^d\setminus\Omega$. Given $\delta > 0$, we set
\[
\omega_\delta := \{ x \in \Omega \colon d(x, \dOm) \le \delta \},
\]
and define $d_\delta (x) = \chi_\Omega (x) \min \left\{ \frac{1}{\delta} d(x, \dOm), 1 \right\}$,  which is a Lipschitz continuous function, with ${\rm supp}(\nabla d_\delta) \subset \omega_\delta$ and $| \nabla d_\delta | = \delta^{-1} \chi_{\omega_\delta}$. Let $\eta_\delta$ be a smooth, nonnegative mollifier supported in $B_\delta(0)$, and define
\[ \begin{aligned}
& s_\delta := d_\delta (s \ast \eta_\delta) + (1 - d_\delta) g , \\
& \vwU_\delta := d_\delta \left( \vwU \ast \eta_\delta \right) + (1 - d_\delta)\vR.
\end{aligned} \]
where $\vwU := \sign (s) \vU = |s| \vNN \in [H^1(\Om)]^{d\times d}$ coincides with $\vR$ on $\dOm$ (because of \eqref{eqn:g_geq_0}). We thus have $(s_\delta, \vwU_\delta) = (s, \vR)$ on $\dOm$ and arguing as in \cite[Proposition 3.2, Step 1]{Nochetto_SJNA2017} it follows that
\begin{equation*}
  s_\delta \to s, \quad \vwU_\delta \to \vwU \quad  \text{a.e. and in } H^1(\Om).
\end{equation*}
The choice to regularize the field $\vwU$ instead of $\vU$ is motivated by the next step. Since convolution breaks the uniaxial structure of tensor fields, we cannot preserve the trace condition $s = \tr[\vU]$. However, convolution does preserve positive-semidefiniteness, which is a property that $\vwU$ satisfies. Additionally, we shall recover the rank-one constraint by means of the map $\vPi$ defined in \eqref{eqn:def_of_P}. Because $\vwU \in S^{1,0}(d)$, we have $\vPi(\vwU) = \vwU$; in contrast, if $s<0$, we have $\vPi(\vU) = \mathbf{0}$ when $d>2$ and $\vPi(\vU) = - s\vNN^\bot$ when $d=2$, where $\vNN^\bot$ is the line field orthogonal to $\vNN$ a.e. in $\Om$.

\medskip\noindent
{\em Step 2: Preserve structural conditions.}
We now rebuild these conditions into the regularized pair $(s_\delta, \vwU_\delta)$ by introducing a coarser scale.
Our assumption \eqref{eqn:bounds_g} implies that the extension of $s$ satisfies the same bound on $\R^d \setminus \Om$. Therefore, we also have 
$-1 + \delta_0 \le s_\delta(x) \le 1-\delta_0$ on $\R^d$.
Moreover, we have $\lambda_1(\vwU_\delta) \le 1-\delta_0$ since, given any vector $\vv \in \R^d$, with $|\vv|=1$, there holds for $\delta$ sufficiently small that
\[
|\vwU_\delta \vv \cdot \vv | \le d_\delta |\sign (s) \vU \vv \cdot \vv \ast \eta_\delta| + (1 - d_\delta) |\vR \vv \cdot \vv | \le  1- \delta_0,
\]
because $|\lambda_1(\vU)| \le 1-\delta_0$ a.e. in $\Om$ and $|\lambda_1(\vR)| \le 1-\delta_0$ in a neighborhood of $\dOm$.
\begin{center}
\begin{figure}[ht]
\includegraphics[width=2.in]{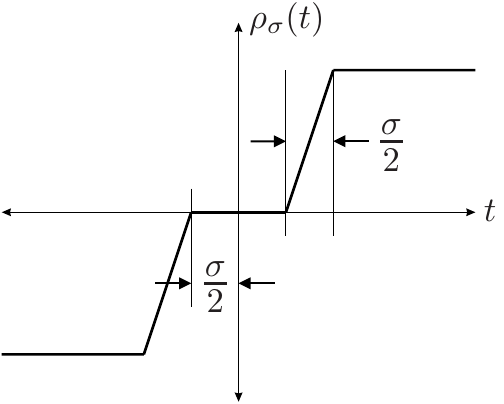}
\caption{Regularized sign function.}
\label{fig:regularized_sign_func}
\end{figure}
\end{center}
We introduce a parameter $\sigma > \delta$ and the following regularization of the sign function (see Figure \ref{fig:regularized_sign_func}):
\[
\rho_\sigma (t) = \begin{cases}
\sign (t) & \text{ if } \sigma< |t|, \\
\frac{2 \, \sign(t) }{\sigma} (|t| - \sigma / 2)  & \text{ if } \sigma/2 < |t| \le \sigma, \\
0 & \text{ if } |t| \le\sigma/2.
\end{cases}
\]
An elementary verification gives
\[
\rho_\sigma (s_\delta) \to \rho_\sigma (s) \text{ as } \delta \to 0, \quad \quad  \text{a.e. and in } H^1(\Om).
\]
Next, we use the operator $\vPi$ given by \eqref{eqn:def_of_P} to define
\[ \begin{aligned}
& s_{\sigma, \delta} := \rho_\sigma(s_\delta) \, \tr [ \vPi(\vwU_\delta)] =  \rho_\sigma(s_\delta) | \vPi(\vwU_\delta) |, \\
& \vU_{\sigma, \delta} :=  \rho_\sigma(s_\delta) \vPi(\vwU_\delta).
\end{aligned} \]
Since $\tr [ \vPi(\vwU_\delta)] = \lambda_1 (\vPi(\vwU_\delta)) \in [0,1-\delta_0]$ and $-1 \le \rho_\sigma \le 1$, we deduce that $-1+\delta_0 \le s_{\sigma,\delta} \le 1-\delta_0$; thus, we have $\vU_{\sigma, \delta} = s_{\sigma, \delta} \vNN_{\sigma, \delta}$ for some $\vNN_{\sigma, \delta} \in \LL^{d-1}$ and $(s_{\sigma,\delta},\vNN_{\sigma,\delta},\vwU_{\sigma,\delta})$ satisfies the structural condition \eqref{eqn:LdG_structural_conditions}.

Under assumption \eqref{eqn:g_geq_0}, it follows that if $\sigma < \delta_0$ then  $s_\delta = g > \sigma$ on $\dOm$, so that $\rho_\sigma(s_\delta) = 1$ on $\dOm$. Thus,
\[ \begin{aligned}
& s_{\sigma,\delta} = \tr [\vPi(\vwU_\delta)] = \tr (\vR) = g \quad \mbox{on } \dOm, \\
& \vU_{\sigma,\delta} = \vPi(\vwU_\delta) = \vR \quad \mbox{on } \dOm.
\end{aligned} \]
Therefore, $(s_{\sigma,\delta}, \vNN_{\sigma,\delta}, \vU_{\sigma,\delta} ) \in \Admisuni(g,\vR,\vM)$. We still need to choose $\sigma$ and $\delta$ such that $(s_{\sigma,\delta}, \vU_{\sigma,\delta})$ is sufficiently close to $(s,\vU)$ in $[H^1(\Om)]^{1+d\times d}$.

\medskip\noindent
    {\em Step 3: Convergence as $\delta \to 0$.} Since $\vPi$ is Lipschitz in view of Lemma \ref{L:Lip-rank1}, it is immediate to see that
\[ \left\lbrace \begin{array}{l}
s_{\sigma,\delta} \to s_\sigma := \rho_\sigma (s) \tr[ \vPi(\vwU)] = \rho_\sigma (s) \tr[\vwU] \\
\vU_{\sigma,\delta} \to \vU_\sigma := \rho_\sigma (s) \vPi(\vwU) = \rho_\sigma (s) \vwU \\
\end{array} \right.  
\quad \mbox{a.e. and in } L^2(\Om),
\]
as $\delta \to 0$. Consider now the set $\Lambda_\sigma:=\{|s| > \frac{\sigma}{2}\}$ to deal with $\vNN_{\sigma,\delta}$. The fact that $s_\delta \to s, \vwU_\delta\to\vwU$ a.e. yields $\rho_\sigma(s_\delta(x))\ne0$, $\tr[\vPi(\vwU_\delta(x))] \ne 0$ for a.e. $x\in\Lambda_\sigma$ provided $\delta$ is sufficiently small depending on $x$. Hence
\[  
\vNN_{\sigma,\delta} = \frac{\vU_{\sigma,\delta}}{s_{\sigma,\delta}} =
\frac{\vPi(\vwU_\delta)}{\tr[\vPi(\vwU_\delta)]} \to
\frac{\vwU}{\tr[\vwU]} = \frac{\vwU}{|s|} = \vNN
\quad\text{a.e. in $\Lambda_\sigma$ and in } L^2(\Lambda_\sigma) \text{, as } \delta \to 0.
\]

We next prove convergence in $H^1(\Om)$.
For $i,j = 1, \ldots, d$, we have
\begin{equation} \label{eqn:convergence_delta} \left\{ \begin{aligned} 
& \nabla [(\vU_{\sigma,\delta})_{ij}] = \rho'_\sigma (s_\delta) \nabla s_\delta \vPi(\vwU_\delta)_{ij} + \rho_\sigma (s_\delta) \nabla  [\vPi(\vwU_\delta)_{ij}], \\
& \nabla [(\vU_{\sigma})_{ij}] = \rho'_\sigma (s) \nabla s \vPi(\vwU)_{ij} + \rho_\sigma (s) \nabla  [\vPi(\vwU)_{ij}].
\end{aligned} \right. \end{equation}
It suffices to check convergence term by term in the right hand sides in \eqref{eqn:convergence_delta}. For the first one, we write
\[ \begin{aligned}
\rho'_\sigma (s_\delta) \nabla s_\delta \vPi(\vwU_\delta)_{ij} -  \rho'_\sigma (s) \nabla s \vPi(\vwU)_{ij} & = \nabla (s_\delta - s) \rho'_\sigma (s_\delta) \vPi(\vwU_\delta)_{ij} \\
& \quad +  \nabla s \left[ \rho'_\sigma (s_\delta) \vPi(\vwU_\delta)_{ij} - \rho'_\sigma (s) \vPi(\vwU)_{ij} \right].
\end{aligned} \]
Since $\nabla (s_\delta - s) \to 0$ in $L^2(\Omega)$ and $|\rho'_\sigma (s_\delta) \vPi(\vwU_\delta)_{ij}|$ is bounded, we deduce that 
\[
\iO |\nabla (s_\delta - s)|^2 \big|\rho'_\sigma (s_\delta) \vPi(\vwU_\delta)_{ij}\big|^2 dx \to 0.
\]
As for the remaining term, we write
\[
\rho'_\sigma (s_\delta) \vPi(\vwU_\delta)_{ij} - \rho'_\sigma (s) \vPi(\vwU)_{ij} = 
[\rho'_\sigma (s_\delta) - \rho'_\sigma (s) ] \vPi(\vwU_\delta)_{ij} + \rho'_\sigma (s) [\vPi(\vwU_\delta)_{ij} - \vP(\vwU)_{ij}]
\]
and notice that 
\[ \begin{aligned}
& \rho'_\sigma (s_\delta) - \rho'_\sigma (s) \to 0 \quad \mbox{in } L^2(\Omega), \\
& \vPi(\vwU_\delta)_{ij} \quad \mbox{remains bounded,} \\
& | \vPi(\vwU_\delta)_{ij} - \vPi(\vwU)_{ij} | \le | \vPi(\vwU_\delta) - \vPi(\vwU) | \le C | \vwU_\delta - \vwU | \to 0  \quad \mbox{in } L^2(\Omega),
\end{aligned} \]
according to Lemma \ref{L:Lip-rank1}.
This shows convergence of the first terms in the right hand sides in \eqref{eqn:convergence_delta}:
\[
 \rho'_\sigma (s_\delta) \nabla s_\delta \vPi(\vwU_\delta) \to  \rho'_\sigma (s) \nabla s \vPi(\vwU) \quad \mbox{in } L^2(\Omega).
\]

To prove that
$
\rho_\sigma (s_\delta) \nabla  [\vPi(\vwU_\delta)] \to \rho_\sigma (s) \nabla  [\vPi(\vwU)] \mbox{ in } L^2(\Omega),
$
we write
\begin{equation} \label{eqn:splitting_regularization}
\begin{aligned}
\rho_\sigma (s_\delta) \nabla  [\vPi(\vwU_\delta)]  - \rho_\sigma (s) \nabla  [\vPi(\vwU)]
= & (\rho_\sigma (s_\delta) - \rho_\sigma (s)) \nabla  [\vPi(\vwU)] \\
& + \rho_\sigma(s_\delta) D\vPi (\vwU_\delta) \nabla(\vwU_\delta - \vwU) + \rho_\sigma(s_\delta) (D\vPi (\vwU_\delta) - D\vPi (\vwU) ) \nabla\vwU.
\end{aligned}
\end{equation}
The first term in the right hand side above converges to $0$ in $L^2(\Om)$ because $\nabla  [\vPi(\vwU)_{ij}] \in L^2(\Omega)$ and $|\rho_\sigma (s_\delta) - \rho_\sigma (s)|$ is bounded and converges to $0$ a.e. in $\Omega$. As for the second term in \eqref{eqn:splitting_regularization}, we use Lemma \ref{L:Lip-rank1} (Lipschitz property of $\vPi$) and the boundedness of $\rho_\sigma$ to obtain
\[
\int_\Omega \rho^2_\sigma(s_\delta) |D\vPi (\vwU_\delta)|^2 |\nabla(\vwU_\delta - \vwU)|^2 \le \|D\vPi\|_\infty^2 \int_\Omega |\nabla(\vwU_\delta - \vwU)|^2 \to 0,
\]
because $\vwU_\delta \to \vwU$ in $H^1(\Omega)$.

Finally, to prove that the last term in \eqref{eqn:splitting_regularization} converges to $0$ in $L^2(\Om)$, we consider $\Lambda_\sigma$ as above, namely
\[
\Lambda_\sigma = \{ |s| > \sigma/2 \}, \quad \Om\setminus\Lambda_\sigma = \{ |s| \le \sigma/2 \}.
\]
In the region $\Omega\setminus\Lambda_\sigma$, we have $\rho_\sigma (s_\delta) \to \rho_\sigma(s) = 0$ a.e.. Using this together with the boundedness of $|\rho_\sigma (s_\delta)|$ and $|D\vPi|$, and the fact that $\nabla \vwU \in L^2(\Om)$, we obtain
\[
\int_{\Omega\setminus\Lambda_\sigma} |\rho_\sigma (s_\delta)|^2  |D\vPi (\vwU_\delta) - D\vPi (\vwU)|^2 |\nabla \vwU|^2 \to 0.
\] 
On the other hand, we have that for a.e. $x \in \Lambda_\sigma$, $\vwU(x) = |s(x)| \vNN(x) \in \mbox{Sym}^{1}(d).$ Also, since $\vwU_\delta \to \vwU$ and $\lambda_1(\vwU(x)) = |s(x)| \ge \sigma/2$ a.e. $x \in \Lambda_\sigma$, there exists a $\delta'$ (depending on $x$) such that $\vwU_\delta(x) \in \mbox{Sym}^{1}(d)$ for all $\delta \le \delta'$. Using that $\vPi$ is of class $C^1$ in $\mbox{Sym}^{1}(d)$, according to Lemma \ref{lem:smooth_eigenvalues}, we deduce that
\[
D \vPi(\vwU_\delta) \to D \vPi(\vwU) \quad \mbox{a.e. in } \Lambda_\sigma. 
\]
Therefore, applying again the Dominated Convergence Theorem yields
\[
\int_{\Lambda_\sigma} |\rho_\sigma (s_\delta)|^2  |D\vPi (\vwU_\delta) - D\vPi (\vwU)|^2 |\nabla \vwU|^2 \to 0.
\]

We have thus proved that
\[ \left\lbrace \begin{array}{l}
s_{\sigma,\delta} \to s_\sigma := \rho_\sigma (s) \tr(\vwU) \\
\vU_{\sigma,\delta} \to \vU_\sigma := \rho_\sigma (s) \vwU \\
\end{array} \right.  
\quad \mbox{in } H^1(\Om), \mbox{ as } \delta \to 0.
\]

\medskip\noindent
{\em Step 4: Convergence as $\sigma \to 0$.}
Because  $\vwU = |s| \vNN$, a straightforward calculation gives
\[ \left\lbrace \begin{array}{l}
s_\sigma = \rho_\sigma (s) \tr(\vwU) \to s \\
\vU_\sigma = \rho_\sigma (s) \vwU \to \vU \\
\end{array} \right.
\quad \mbox{a.e. and in } L^2(\Om), \mbox{ as }  \sigma \to 0.
\]
To prove convergence in $H^1(\Om)$ we observe that $\vU_\sigma=\rho_\sigma(s) \vwU = \rho_\sigma(s) \, \sign(s) \, \vU = |\rho_\sigma(s)| \, \vU$, whence
\[
\nabla(\vU_\sigma - \vU) = \nabla\big[\big( |\rho_\sigma(s)| - 1 \big) \vU  \big]
= \nabla |\rho_\sigma(s)| \, \vU + \big( |\rho_\sigma(s)| - 1 \big) \nabla \vU
\]
We show that these two terms tend to zero separately in $L^2(\Om)$. First note that
\[
\big|\nabla |\rho_\sigma(s)|\big| = \rho'_\sigma(s) \big|\nabla s\big| = \frac{2}{\sigma} \chi_{\{\frac{\sigma}{2}<|s|<\sigma\}} \big|\nabla s\big| 
\]
whereas $|\vU| = |s| < \sigma$ in the set $\{\frac{\sigma}{2}<|s|<\sigma\}$. Since
$\chi_{\{\frac{\sigma}{2}<|s|<\sigma\}}\to0$ a.e. in $\Om$ as $\sigma\to0$, and $\big|\nabla s\big| \in L^2(\Om)$, we infer from the Dominated Convergence Theorem that
\[
\int_\Om \big| \nabla |\rho_\sigma(s)| \, \vU  \big|^2 \to 0
\quad\text{as } \sigma\to0.
\]
On the other hand, in view of the definition of $\rho_\sigma(s)$, we have
\[
\int_\Om \big| \big( |\rho_\sigma(s)| - 1 \big) \nabla \vU \big|^2
\le \int_\Om \chi_{\{|s| \le \sigma \}} \big|\nabla \vU \big|^2
= \int_\Om \chi_{\{|\vU| \le \sigma \}} \big|\nabla \vU \big|^2
\to  \int_\Om \chi_{\{|\vU| = 0 \}} \big|\nabla \vU \big|^2 = 0
\]
because $\nabla v = 0$ a.e. in $\{v=0\}$ for any $v \in H^1(\Om)$ \cite[Ch. 5, Exercise 17]{Evans:book}. We have thus proved that $\nabla(\vU_\sigma-\vU)\to0$ in $L^2(\Om)$ as $\sigma\to0$.

It remains to deal with $s_\sigma-s$. We write $s_\sigma = \rho_\sigma(s) \, \tr(\sign(s) \, \vU) = |\rho_\sigma(s)| \, \tr(\vU)$ to realize that
\[
\nabla(s_\sigma-s) = \nabla\big[\big( |\rho_\sigma(s)| - 1 \big) \, \tr (\vU)\big] =
\nabla |\rho_\sigma(s)| \, \tr (\vU) + \big( |\rho_\sigma(s)| - 1 \big) |\nabla \tr (\vU)|.
\]
This expression has the same structure as $\nabla(\vU_\sigma-\vU)$ except that $\vU$ is now replaced by $\tr (\vU)$. Therefore, the same argument as before yields as $\sigma\to0$
\[
\nabla(s_\sigma-s) \to 0 \quad \text{in } L^2(\Om).
\]

\medskip\noindent
{\em Step 5: Choice of $\sigma$ and $\delta$.} 
Given $\eps > 0$, we first choose $\sigma  > 0$ such that
\[ 
 \| \vU_\sigma - \vU \|_{H^1(\Omega)} \le \eps/2, \quad
 \| s_\sigma - s \|_{H^1(\Omega)} \le \eps/2, \quad
 \|\vNN - \vNN \chi_{\{|s| > \frac{\sigma}{2}\}} \|_{L^2(\Om\setminus\Sing)} \le \eps/2,
\]
because $\chi_{\{ |s| > \frac{\sigma}{2} \}} \to \chi_{\{ |s| > 0\}}$ a.e. as $\sigma\to0$ and
$\Om\setminus\Sing = \{|s|>0\}$. Since $\chi_{\{0<|s|\le\frac{\sigma}{2}\}} \to 0$ a.e. and $|\vNN_{\sigma,\delta}|=1$, we can further reduce $\sigma$ so that
\[
\|\vNN_{\sigma,\delta}\|_{L^2(\{0<|s|\le\frac{\sigma}{2}\})} \le \eps/4.
\]
Finally, take $\delta \le \sigma$ such that
\[ 
\| \vU_{\sigma, \delta} - \vU_\sigma \|_{H^1(\Omega)} \le \eps/2, \quad
\| s_{\sigma, \delta} - s_\sigma \|_{H^1(\Omega)} \le \eps/2, \quad
\| \vNN_{\sigma, \delta} - \vNN \|_{L^2(\{|s| > \frac{\sigma}{2} \})} \le \eps/4.
\]
The proof concludes upon defining $(s_\eps,\vNN_\eps, \vU_\eps) := (s_{\sigma,\delta}, \vNN_{\sigma,\delta}, \vU_{\sigma,\delta})$.
\end{proof}

With this regularization result at hand, we now address the construction of a recovery sequence. Given $\eps>0$, let $(s_{\eps,h},\vU_{\eps,h}) := \big(I_h(s_{\eps,h}),I_h(\vU_{\eps,h})\big)$ be the Lagrange interpolants of the regularized pair $(s_\eps, \vU_\eps)$ constructed in Proposition \ref{prop:LdG_regularization}, that are well-defined because $(s_\eps,\vU_\eps) \in W^{1,\infty}(\Om) \times [W^{1,\infty}(\Om)]^{d\times d}$. We define the line field $\vNN_{\eps,h} \in \THh$ so that, at the node $x_i \in \Nk_h$ it satisfies
\[
\vNN_{\eps,h} (x_i) = \left\lbrace\begin{array}{cl}
\vU_\eps(x_i) / s_\eps(x_i) & \mbox{ if } s_{\eps}(x_i) \ne 0, \\
\mbox{any tensor in } \LL^{d-1} & \mbox{ if } s_{\eps}(x_i) = 0.
\end{array} \right.
\]
This definition guarantees that $\vU_{\eps,h} = I_h(s_{\eps,h}\vNN_{\eps,h})$, whence the structural condition \eqref{eqn:LdG_struct_condition_discrete} is satisfied and thus $(s_{\eps,h}, \vNN_{\eps,h},\vU_{\eps,h}) \in \Admisuni^h(g_h, \vR_h)$. Because $(s_{\eps,h}, \vU_{\eps,h}) \to (s_\eps, \vU_\eps)$ in $H^1(\Om)\times[H^1(\Om)]^{d\times d}$ as $h\to0$, we readily deduce that \eqref{eqn:LdG_convergence_H1} is satisfied. Proving \eqref{eqn:LdG_limsup} is equivalent to showing that $\mathcal{E}^h \to 0$, the consistency term in \eqref{eqn:LdG_residual}, and can be done using the same arguments as in \cite[Lemma 3.3]{Nochetto_SJNA2017}. We omit the proof.

\begin{lemma}[lim-sup inequality] \label{lem:LdG_limsup}
Let $(s_\eps,\vNN_\eps,\vU_\eps)\in \Admisuni(g,\vR)$ be the functions constructed in Proposition \ref{prop:LdG_regularization} and  $(s_{\eps,h}, \vNN_{\eps,h},\vU_{\eps,h}) \in \Admisuni^h(g_h, \vR_h)$ be the discrete functions defined above. Then,
\[
\Eunimain[s_\eps,\vNN_\eps] = \lim_{h \to 0} \Eunimain^h[s_{\eps,h} \vNN_{\eps,h}] = \lim_{h \to 0} \EuniUmain^h[s_{\eps,h} \vU_{\eps,h}] = \EuniUmain[s_\eps,\vU_\eps] .
\]
\end{lemma}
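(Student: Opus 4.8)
The plan is to reduce the three asserted equalities to the single statement $\consistLdG_h\to0$ as $h\to0$, and then to establish this by a local mesh analysis isolating a thin layer around $\Sing=\{s_\eps=0\}$.

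I would first dispose of the two outer equalities. The identity $\Eunimain[s_\eps,\vNN_\eps]=\EuniUmain[s_\eps,\vU_\eps]$ is exactly \eqref{eqn:Q_tensor_energy_sU}: since $(s_\eps,\vU_\eps)\in W^{1,\infty}(\Om)\times[W^{1,\infty}(\Om)]^{d\times d}$ with $\vU_\eps=s_\eps\vNN_\eps$ and $\vNN_\eps\in\LL^{d-1}$ a.e., the orthogonal decomposition $\nabla\vU_\eps=\nabla s_\eps\otimes\vNN_\eps+s_\eps\nabla\vNN_\eps$ gives $s_\eps^2|\nabla\vNN_\eps|^2=|\nabla\vU_\eps|^2-|\nabla s_\eps|^2$ a.e.\ on $\Om\setminus\Sing$ and both sides vanish a.e.\ on $\Sing$, which is precisely the meaning of $\Euniring[s_\eps,\vNN_\eps]$. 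The equality $\lim_{h\to0}\EuniUmain^h[s_{\eps,h},\vU_{\eps,h}]=\EuniUmain[s_\eps,\vU_\eps]$ holds because $\EuniUmain^h$ in \eqref{eqn:EuniUmain} is evaluated with \emph{no} numerical quadrature and $(s_{\eps,h},\vU_{\eps,h})=(I_hs_\eps,I_h\vU_\eps)\to(s_\eps,\vU_\eps)$ in $H^1(\Om)\times[H^1(\Om)]^{d\times d}$ by the standard Lagrange interpolation estimates, valid since $s_\eps,\vU_\eps$ are Lipschitz. Finally, Lemma \ref{lem:energy_inequality} (identity \eqref{eqn:energyequality}) supplies $\Eunimain^h[s_{\eps,h},\vNN_{\eps,h}]=\EuniUmain^h[s_{\eps,h},\vU_{\eps,h}]+\consistLdG_h$, so the whole chain follows once $\consistLdG_h\to0$.

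To show that $\consistLdG_h=\tfrac18\sum_{i\ne j}k_{ij}(\dij s_{\eps,h})^2|\dij\vNN_{\eps,h}|^2\ge0$ (recall $k_{ij}\ge0$ by \eqref{eqn:weakly-acute}) tends to $0$, I would fix $c>0$, set $\Om_c:=\{|s_\eps|>c\}$, assign to each pair $(x_i,x_j)$ with $k_{ij}\ne0$ a simplex $T$ containing both vertices, and distinguish three cases. Pairs with $s_\eps(x_i)=s_\eps(x_j)=0$ contribute nothing, since $\dij s_{\eps,h}=0$. A pair is \emph{good} if its simplex lies in $\Om_c$: there $\vNN_\eps=\vU_\eps/s_\eps$ is Lipschitz with some constant $L_{c,\eps}$ (quotient of Lipschitz functions with denominator $\ge c$) and $\vNN_{\eps,h}(x_i)=\vNN_\eps(x_i)$ at each node with $s_\eps(x_i)\ne0$; hence $|\dij\vNN_{\eps,h}|\le L_{c,\eps}|x_i-x_j|\lesssim h$, while $|\dij s_{\eps,h}|\le\|\nabla s_\eps\|_{L^\infty}|x_i-x_j|\lesssim h$ always, so with $|k_{ij}|\lesssim h^{d-2}$, quasi-uniformity and bounded overlap the total good contribution is $O(h^2)\to0$. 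For the remaining \emph{bad} pairs I would use only $|\dij\vNN_{\eps,h}|\le2$ (since $|\vNN_{\eps,h}(x_i)|=1$), so their contribution is $\lesssim\big|\bigcup T\big|$ over the bad simplices carrying a vertex outside $\Sing$; each such $T$ meets $\{|s_\eps|\le c\}$ and meets $\{s_\eps\ne0\}$, so this union lies in an $O(h)$-neighbourhood of $\{|s_\eps|\le c\}\setminus\mathrm{int}\,\Sing$. Letting $h\to0$ and then $c\to0$ gives $\limsup_h\consistLdG_h\lesssim|\partial\Sing|$.

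The main obstacle is this last bound: one needs $|\partial\Sing|=0$, i.e.\ the zero set of the regularized $s_\eps$ of Proposition \ref{prop:LdG_regularization} must be tame. Away from $\dOm$ that zero set equals $\{|s\ast\eta_\delta|\le\sigma/2\}\cup\{\lambda_1(\vwU\ast\eta_\delta)=\lambda_2(\vwU\ast\eta_\delta)\}$; since $s\ast\eta_\delta\in C^\infty$ one may additionally require in the construction that $\pm\sigma/2$ be regular values of $s\ast\eta_\delta$ (Sard), so that $\partial\{|s\ast\eta_\delta|\le\sigma/2\}$ is a smooth hypersurface and hence null, while the coincidence set of the two leading eigenvalues of the smooth field $\vwU\ast\eta_\delta$ is lower dimensional; thus $|\partial\Sing|=0$. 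Granting this, the two limits combine to give $\consistLdG_h\to0$ and the lemma. The precise degenerate-element bookkeeping — constants, and the mixed pairs straddling $\partial\Sing$ — I would carry out exactly as in the proof of Lemma \ref{lem:energy_inequality}, via the pointwise expansion of $\dij\vU_\eps$ and the orthogonality $(\dij\vNN_{\eps,h}):(\vNN_{\eps,h}(x_i)+\vNN_{\eps,h}(x_j))=0$, and otherwise defer to \cite[Lemma 3.3]{Nochetto_SJNA2017}, whose Ericksen setting is structurally identical.
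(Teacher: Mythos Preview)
Your reduction is exactly what the paper does: it explicitly states that the lemma amounts to showing $\consistLdG_h\to0$ and then defers the argument to \cite[Lemma~3.3]{Nochetto_SJNA2017}, omitting the proof entirely. Your treatment of the two outer equalities (via \eqref{eqn:Q_tensor_energy_sU} and interpolation convergence of the Lipschitz pair $(s_\eps,\vU_\eps)$) and your use of the identity \eqref{eqn:energyequality} to isolate $\consistLdG_h$ are correct and match the paper's logic. The good/bad splitting with threshold $c$ is the standard mechanism from the Ericksen analysis you cite.

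There is, however, a genuine gap in the part you flag as ``the main obstacle.'' Your bound $\limsup_h\consistLdG_h\lesssim|\partial\Sing|$ followed by an appeal to Sard's theorem is not watertight for the particular $s_\eps$ produced by Proposition~\ref{prop:LdG_regularization}. Two issues: (i) the Sard step requires you to \emph{modify} the regularization by choosing $\sigma$ so that $\pm\sigma/2$ are regular values of $s\ast\eta_\delta$; this is legitimate but must be stated as an additional constraint in the construction, and you must also check it is compatible with the already-made choice $\delta\le\sigma$; (ii) your dismissal of the eigenvalue coincidence set $\{\lambda_1(\vwU\ast\eta_\delta)=\lambda_2(\vwU\ast\eta_\delta)\}$ as ``lower dimensional'' is unjustified---the gap function $\lambda_1-\lambda_2$ is only Lipschitz, not smooth, so Sard does not apply, and without further argument this set could have positive measure. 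A cleaner route is to observe that on $\{|s_\delta|>\sigma/2\}$ the convolution $\vwU\ast\eta_\delta$ can be kept in $\mathrm{Sym}^1(d)$ by taking $\delta$ small enough relative to $\sigma$ (using that $\vwU=|s|\vNN$ has spectral gap $|s|\gtrsim\sigma/2$ there and convolution perturbs eigenvalues by $O(\delta)$ in a quantified sense on compact subsets), so the coincidence set is empty where it matters; but this uniformity is not free from the pointwise convergence in Proposition~\ref{prop:LdG_regularization} and requires its own estimate. Once either fix is in place, your argument goes through.
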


%---------------------------------------------------------------------------
\subsection{Lim-inf property: Weak lower semicontinuity}
%---------------------------------------------------------------------------

This property hinges on convexity of the underlying functional. However, this is not apparent for the main energy in \eqref{eqn:Q_tensor_energy_sU}
\[
\EuniUmain[\widetilde{s},\widetilde{\vU}] = -\frac{1}{2d} \iO  |\nabla \widetilde{s}|^2 \, dx + \frac12 \iO |\nabla \widetilde{\vU}|^2 \, dx.
\]  
because of the negative sign. What restores convexity is the structural property \eqref{eqn:LdG_structural_conditions}, which reads $\widetilde{\vU} = \widetilde{s} \vNN$ in terms of the triple $(\widetilde{s},\vNN,\widetilde{\vU})$, along with $|\widetilde{\vU}| = |\widetilde{s}|$ and equalities
\[
\big|\nabla\widetilde{s}\big| = \big| \nabla |\widetilde{s}| \big| = \big| \nabla |\widetilde{\vU}| \big| = \big| \nabla \widetilde{\vU} \big| \qquad\text{a.e. }\Om.
\]
This reveals the fundamental convexity property of $\EuniUmain[\widetilde{s},\widetilde{\vU}]$, namely
\[
\EuniUmain[\widetilde{s},\widetilde{\vU}] = \frac{d-1}{2d} \int_\Om \big| \nabla |\widetilde{\vU}| \big|^2 dx.
\]

The discretization poses a severe challenge to convexity because the discrete variables $(\widetilde{s}_h,\widetilde{\vU}_h)$ defined in \eqref{eqn:def_of_tilde} satisfy $|\widetilde{s}_h| = |\widetilde{\vU}_h|$ only at the mesh nodes and $\nabla\widetilde{s}_h \ne \nabla |s_h|$. However, upon flattening the matrix $\vU_h$ into a vector and exploiting that the Euclidean norm of the gradient of the flattened matrix coincides with the Fr\"{o}benius norm $|\nabla\vU_h|$, we resort to \cite[Lemma 3.4]{Nochetto_SJNA2017} to establish the following result.
\begin{lemma}[weak lower semi-continuity] \label{lem:LdG_wlsc}
If $\vW_h\in\Uh$ converges weakly in $[H^1(\Omega)]^{d \times d}$ to $\vW$, then
\[
\liminf_{h \to 0} \left(-\frac1d \iO  | \nabla I_h |\tr (\vW_h)| |^2 + \iO | \nabla \vW_h|^2
\right) \ge -\frac{1}{d} \iO | \nabla |\tr (\vW)| |^2 + \iO | \nabla \vW|^2.
\]
\end{lemma}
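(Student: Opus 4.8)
The plan is to reduce the statement to the convex, positive part of the functional by splitting $\vW_h$ into its trace and deviatoric (traceless) parts; this is the matrix analogue of the argument in \cite[Lemma 3.4]{Nochetto_SJNA2017}. \emph{Step 1: trace/deviatoric splitting.} Write $r_h := \tr(\vW_h) \in \Sh$ and $\vW_h = \frac1d r_h \vI + \vW_h^0$, where $\vW_h^0 := \vW_h - \frac1d r_h \vI$ has affine entries on each $T\in\Tk_h$ and satisfies $\tr(\vW_h^0)=0$. Since $\vI \dd \vW_h^0 = \tr(\vW_h^0) = 0$, the cross terms in $|\nabla\vW_h|^2$ cancel, so pointwise in $\Om$ one has $|\nabla\vW_h|^2 = \frac1d|\nabla r_h|^2 + |\nabla\vW_h^0|^2$. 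Because $I_h|\tr(\vW_h)| = I_h|r_h|$, this gives the identity
\[
-\frac1d |\nabla I_h|\tr(\vW_h)||^2 + |\nabla\vW_h|^2 = \frac1d\big( |\nabla r_h|^2 - |\nabla I_h|r_h||^2 \big) + |\nabla\vW_h^0|^2 .
\]

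\emph{Step 2: the nonconvex term is harmless.} Applying \eqref{eqn:dirichlet_integral_identity} to the scalar piecewise linear functions $r_h$ and $I_h|r_h|$, the weak acuteness \eqref{eqn:weakly-acute} together with $\bigl||a|-|b|\bigr|\le|a-b|$ gives, exactly as in \eqref{eqn:bound_nabla_sh},
\[
\iO |\nabla r_h|^2 - \iO |\nabla I_h|r_h||^2 = \frac12 \sum_{i,j=1}^N k_{ij}\Big[ (\dij r_h)^2 - \big(|r_h(x_i)| - |r_h(x_j)|\big)^2 \Big] \ge 0 .
\]
Integrating the identity of Step 1 over $\Om$ then yields $\iO \big( -\frac1d |\nabla I_h|\tr(\vW_h)||^2 + |\nabla\vW_h|^2 \big) \ge \iO |\nabla\vW_h^0|^2$.

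\emph{Step 3: passage to the limit.} Since $\tr$ is a bounded linear map on $[H^1(\Om)]^{d\times d}$, from $\vW_h \rightharpoonup \vW$ we get $r_h \rightharpoonup r := \tr(\vW)$ and $\vW_h^0 \rightharpoonup \vW^0 := \vW - \frac1d r\vI$ weakly in the respective $H^1$ spaces; weak lower semicontinuity of the Dirichlet energy gives $\liminf_h \iO|\nabla\vW_h^0|^2 \ge \iO|\nabla\vW^0|^2$. On the other hand, the same splitting at the continuous level gives $\iO|\nabla\vW|^2 = \frac1d\iO|\nabla r|^2 + \iO|\nabla\vW^0|^2$, while $|\nabla|r|| = |\nabla r|$ a.e.\ for $r\in H^1(\Om)$, so $-\frac1d\iO|\nabla|\tr(\vW)||^2 + \iO|\nabla\vW|^2 = \iO|\nabla\vW^0|^2$. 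Chaining the last three facts proves the claim.

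\emph{Main obstacle.} The coefficient $-1/d$ destroys convexity, so weak lower semicontinuity cannot be applied directly; the point of the trace/deviatoric splitting is that the only nonconvex contribution is the scalar quantity $\frac1d\big(\iO|\nabla r_h|^2 - \iO|\nabla I_h|r_h||^2\big)$, whose continuous counterpart vanishes identically and whose discrete version is $\ge 0$ precisely because of the weak acuteness \eqref{eqn:weakly-acute} — this is the single place where the mesh geometry is indispensable and where a purely continuous argument would fail. Equivalently, flattening $\vW_h$ into an $\R^{d^2}$-valued field and using that $|\tr(\vW_h(x_i))| = |\vW_h(x_i)|$ at the nodes of admissible triples reduces the statement verbatim to \cite[Lemma 3.4]{Nochetto_SJNA2017}.
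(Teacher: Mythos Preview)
Your proof is correct and is essentially the approach the paper takes: the paper flattens $\vW_h$ to a vector in $\R^{d^2}$ and cites \cite[Lemma~3.4]{Nochetto_SJNA2017}, and your trace/deviatoric decomposition is precisely the matrix version of that argument written out in full. One small caveat on your closing ``equivalently'' remark: the identity $|\tr(\vW_h(x_i))|=|\vW_h(x_i)|$ holds only for admissible triples, not for arbitrary $\vW_h\in\Uh$ as in the lemma's hypothesis, so that reduction is narrower than the statement --- but your Steps~1--3 nowhere rely on it and establish the lemma in full generality.
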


%---------------------------------------------------------------------------
\subsection{Equicoercivity and compactness}
%---------------------------------------------------------------------------

The last ingredient to prove the convergence of minimum problems is some form of compactness. This follows by deriving uniform bounds in $H^1$ for the discrete minimizers $(s_h, \vU_h)$ and $(\widetilde{s}_h, \widetilde{\vU}_h) = (I_h|s_h|, I_h (|s_h| \vNN_h) )$.

\begin{lemma}[coercivity] \label{lem:LdG_coercivity}
Given $(s_h, \vNN_h, \vU_h) \in \Admisuni^h(g_h, \vR_h)$, we have
\begin{equation} \label{eqn:LdG_coercivity}
\Eunimain^h[s_h,\vNN_h] \ge \frac{d-1}{2d} \max \left\{ \| \nabla \vU_h \|_{L^2(\Om)}^2, \| \nabla s_h \|_{L^2(\Om)}^2 \right\},
\end{equation}
and
\begin{equation} \label{eqn:LdG_coercivity_tilde}
\Eunimain^h[s_h,\vNN_h] \ge \frac{d-1}{2d} \max \left\{ \| \nabla \widetilde{\vU}_h \|^2_{L^2(\Om)}, \| \nabla \widetilde{s}_h \|_{L^2(\Om)}^2 \right\}.
\end{equation}
\end{lemma}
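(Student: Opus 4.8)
The plan is to obtain both inequalities from the algebraic identities already derived inside the proof of Lemma~\ref{lem:energy_inequality}, combined with the nonnegativity of the interaction energy $\Euniring^h$ and of the consistency terms $\consistLdG_h, \consistULdG_h$. First I would rewrite \eqref{eqn:LdG_discrete_energy_main} using the Dirichlet identity \eqref{eqn:dirichlet_integral_identity} in the compact form
\[
\Eunimain^h[s_h,\vNN_h] = \frac{d-1}{2d}\,\|\nabla s_h\|_{L^2(\Om)}^2 + \Euniring^h[s_h,\vNN_h],
\]
with $\Euniring^h$ as in \eqref{eqn:LdG_def_Euniringh}. Since $\vNN_h\in\THh$ forces $|\vNN_h(x_i)|=1$ at every vertex, the weights $\tfrac12\big(s_h(x_i)^2+s_h(x_j)^2\big)$ are nonnegative, and together with $k_{ij}\ge0$ for $i\ne j$ from \eqref{eqn:weakly-acute} (the diagonal terms $i=j$ vanishing identically) this gives $\Euniring^h[s_h,\vNN_h]\ge0$. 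Hence $\Eunimain^h[s_h,\vNN_h]\ge\tfrac{d-1}{2d}\|\nabla s_h\|_{L^2(\Om)}^2$, which is the $\|\nabla s_h\|$ half of \eqref{eqn:LdG_coercivity}.

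For the $\|\nabla\vU_h\|$ part I would invoke identity \eqref{eqn:LdG_gradvU_h} from the proof of Lemma~\ref{lem:energy_inequality}: using \eqref{eqn:dirichlet_integral_identity} and \eqref{eqn:LdG_def_Euniringh} it reads $\tfrac12\|\nabla\vU_h\|_{L^2(\Om)}^2 = \Euniring^h[s_h,\vNN_h] + \tfrac12\|\nabla s_h\|_{L^2(\Om)}^2 - \consistLdG_h$. Since $\consistLdG_h\ge0$ this yields $\|\nabla\vU_h\|_{L^2(\Om)}^2 \le 2\,\Euniring^h[s_h,\vNN_h] + \|\nabla s_h\|_{L^2(\Om)}^2$; multiplying by $\tfrac{d-1}{2d}$ and using $\tfrac{d-1}{d}\le 1$ together with $\Euniring^h\ge0$ then gives
\[
\frac{d-1}{2d}\,\|\nabla\vU_h\|_{L^2(\Om)}^2 \le \Euniring^h[s_h,\vNN_h] + \frac{d-1}{2d}\,\|\nabla s_h\|_{L^2(\Om)}^2 = \Eunimain^h[s_h,\vNN_h],
\]
which, combined with the first estimate, proves \eqref{eqn:LdG_coercivity}.

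The estimate \eqref{eqn:LdG_coercivity_tilde} follows the same route with $(\widetilde s_h,\widetilde\vU_h)$ in place of $(s_h,\vU_h)$. The bound for $\widetilde s_h$ is immediate from the first estimate and \eqref{eqn:bound_nabla_sh}, which gives $\|\nabla\widetilde s_h\|_{L^2(\Om)}\le\|\nabla s_h\|_{L^2(\Om)}$. For $\widetilde\vU_h$ I would use identity \eqref{eqn:LdG_gradvU_h_tilde}; since $\widetilde s_h(x_i)^2 = s_h(x_i)^2$, its weighted sum equals exactly $\Euniring^h[s_h,\vNN_h]$, so $\|\nabla\widetilde\vU_h\|_{L^2(\Om)}^2 = 2\,\Euniring^h[s_h,\vNN_h] + \|\nabla\widetilde s_h\|_{L^2(\Om)}^2 - 2\,\consistULdG_h \le 2\,\Euniring^h[s_h,\vNN_h] + \|\nabla s_h\|_{L^2(\Om)}^2$, and the same $\tfrac{d-1}{d}\le1$ step yields $\tfrac{d-1}{2d}\|\nabla\widetilde\vU_h\|_{L^2(\Om)}^2 \le \Eunimain^h[s_h,\vNN_h]$.

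I do not expect a genuine obstacle, since the argument is entirely algebraic; the two points needing care are that one must use the displayed identities \eqref{eqn:LdG_gradvU_h} and \eqref{eqn:LdG_gradvU_h_tilde} appearing inside the proof of Lemma~\ref{lem:energy_inequality} rather than merely its statement, and that the elementary inequality $(d-1)/d\le1$ is exactly what converts the lower bound for $\EuniUmain^h$ in \eqref{eqn:EuniUmain} — which carries the adverse term $-\tfrac1{2d}\|\nabla s_h\|_{L^2(\Om)}^2$ — into the advertised lower bound for $\|\nabla\vU_h\|_{L^2(\Om)}^2$.
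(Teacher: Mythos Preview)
Your proof is correct and follows essentially the same approach as the paper: both derive the $\|\nabla s_h\|$ bound from nonnegativity of $\Euniring^h$ under weak acuteness, and both obtain the $\|\nabla\vU_h\|$ bound from the identity \eqref{eqn:LdG_gradvU_h} (equivalently \eqref{eqn:energyequality}) together with $\consistLdG_h\ge0$; the only cosmetic difference is that the paper absorbs the extra $\tfrac{1}{2d}\|\nabla s_h\|^2$ via the already-proved bound $\|\nabla s_h\|^2\le\tfrac{2d}{d-1}\Eunimain^h$, whereas you use $\tfrac{d-1}{d}\le1$ on the $\Euniring^h$ coefficient, which is the same algebra rearranged. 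The tilde estimates are handled identically in both.
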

\begin{proof}
First of all, definition \eqref{eqn:LdG_discrete_energy_main} of $\Eunimain^h$ in conjunction with \eqref{eqn:dirichlet_integral_identity} and \eqref{eqn:weakly-acute} readily yields
\[
\Eunimain^h[s_h,\vNN_h] \ge \frac{d-1}{4d} \sum_{i,j = 1}^n k_{ij} (\dij s_h)^2 = \frac{d-1}{2d} \| \nabla s_h \|_{L^2(\Om)}^2.
\]
Moreover, because $|\dij \widetilde{s}_h| \le | \dij s_h|$ for all $i,j = 1, \ldots, n$, we also have $ \| \nabla \widetilde{s}_h \|_{L^2(\Om)} \le  \| \nabla s_h \|_{L^2(\Om)}$.

Secondly, combining \eqref{eqn:EuniUmain} and \eqref{eqn:energyequality} with $\consistLdG_h\ge0$, we obtain
\[ 
\frac12 \| \nabla \vU_h \|^2_{L^2(\Om)} = \Eunimain^h[s_h, \vNN_h]
+ \frac{1}{2d} \|\nabla s_h\|_{L^2(\Omega)}^2 - \consistLdG_h
\le \frac{d}{d-1} \Eunimain^h[s_h, \vNN_h].
\]
Estimate $\frac{d-1}{2d} \| \nabla \widetilde{\vU}_h \|^2_{L^2(\Om)} \le \Eunimain^h[s_h, \vNN_h]$ follows similarly from \eqref{eqn:energyequality_tilde}.
\end{proof}

Our next goal is to show that, from sequences of discrete functions $(s_h,\vNN_h,\vU_h)$ and $(\widetilde{s}_h,\vNN_h,\widetilde{\vU}_h)$ with uniformly bounded energies, it is possible to extract subsequences that converge to admissible functions. For that purpose, we need an elementary auxiliary result.

\begin{lemma}[admissible tensors] \label{lem:rank1_characterization}
Let $\vM \in \text{Sym}(d)$ be such that $\tr(\vM^k) = [\tr(\vM)]^k$ for all $k = 1, \ldots , d$. Then, at least $d-1$ eigenvalues of $\vM$ are equal to zero, i.e., $\vM$ has rank less than or equal to 1.
\end{lemma}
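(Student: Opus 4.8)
The plan is to pass to the eigenvalues of $\vM$ and translate the trace hypotheses into statements about elementary symmetric polynomials via Newton's identities. Since $\vM \in \text{Sym}(d)$, its eigenvalues $\lambda_1,\dots,\lambda_d$ (counted with multiplicity) are real. Write $p_k := \tr(\vM^k) = \sum_{i=1}^d \lambda_i^k$ for the $k$-th power sum and $e_k$ for the $k$-th elementary symmetric polynomial of $\lambda_1,\dots,\lambda_d$; then $p_1 = \tr\vM$ and the hypothesis is precisely $p_k = p_1^k$ for $k = 1,\dots,d$.

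The key step is to prove, by induction on $k$, that $e_1 = \tr\vM$ and $e_k = 0$ for $2 \le k \le d$. The base case is Newton's first identity $p_1 = e_1$. For the inductive step, assume $e_2 = \dots = e_{k-1} = 0$ for some $2 \le k \le d$; Newton's identity for this index (available precisely because $k \le d$) reads
\[
p_k = e_1 p_{k-1} - e_2 p_{k-2} + \dots + (-1)^{k-2} e_{k-1} p_1 + (-1)^{k-1} k\, e_k,
\]
and the inductive hypothesis together with $e_1 = p_1$ collapses the right-hand side to $p_1 \, p_{k-1} + (-1)^{k-1} k\, e_k$. Substituting $p_{k-1} = p_1^{k-1}$ and $p_k = p_1^k$ from the hypothesis yields $(-1)^{k-1} k\, e_k = 0$, hence $e_k = 0$.

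Once $e_2 = \dots = e_d = 0$ is established, the characteristic polynomial of $\vM$ reduces to
\[
\det(t\vI - \vM) = t^d - e_1 t^{d-1} + \dots + (-1)^d e_d = t^{d-1}(t - \tr\vM),
\]
so $0$ is an eigenvalue of algebraic multiplicity at least $d-1$: at least $d-1$ of the $\lambda_i$ vanish. Because $\vM$ is symmetric, hence diagonalizable, the geometric and algebraic multiplicities of the eigenvalue $0$ coincide, so $\dim\ker\vM \ge d-1$ and $\rank\vM \le 1$, which is the claim.

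I do not expect a genuine obstacle: the computation is routine once Newton's identities are in play, and the only points needing care are the sign bookkeeping in those identities and the fact that the hypothesis is required up to $k = d$ so the induction reaches $e_d$. An essentially equivalent shortcut: if $r \le d$ denotes the number of nonzero eigenvalues, then the power sums $p_1,\dots,p_r$ of these $r$ numbers agree with those of the multiset $\{p_1,0,\dots,0\}$ (with $r-1$ zeros), hence the multisets coincide; since the $r$ nonzero eigenvalues include no zero, this forces $r \le 1$.
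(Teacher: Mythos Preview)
Your proof is correct: the induction via Newton's identities is carried out cleanly, the sign bookkeeping is right, and the conclusion about the characteristic polynomial follows. The paper itself does not prove this lemma at all---it simply labels it an ``elementary auxiliary result'' and states it without proof---so there is no approach to compare against; your argument (or the power-sum uniqueness shortcut you sketch at the end) is exactly the kind of routine verification the authors are implicitly deferring to the reader.
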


We are now ready to pursue our goal. The key point in the next result is to verify that the candidate tensor fields satisfy the rank-one constraint.

\begin{lemma}[characterization of limits] \label{lem:LdG_limits}
Let a sequence $(s_h, \vNN_h, \vU_h) \in \Admisuni^h(g_h, \vR_h)$ satisfy
\[
\Eunimain^h[s_h, \vNN_h] \le \Lambda \quad \forall h > 0,
\]
for some constant $\Lambda$ independent of $h$, and let $\widetilde{s}_h = I_h (|s_h|)$, $\widetilde{\vU}_h = I_h (|s_h| \vNN_h)$ as in \eqref{eqn:def_of_tilde}.
Then, there exist subsequences (not relabeled) $(s_h, \vU_h) \in \X_h$ and $(\widetilde{s}_h, \widetilde{\vU}_h) \in \X_h$, and functions
$(s, \vU), (\widetilde{s}, \widetilde{\vU})\in H^1(\Om)\times[H^1(\Om)]^{d \times d}$ and $\vNN \in L^\infty(\Om;\LL^{d-1})$ such that:
\begin{itemize}
\item $(s_h, \vU_h) \to (s, \vU)$ in $L^2(\Om)\times[L^2(\Om)]^{d \times d}$,  a.e in $\Om$,  $(s_h, \vU_h) \rightharpoonup (s, \vU)$ in $H^1(\Om) \times [H^1(\Om)]^{d \times d}$;
\item $(\widetilde{s}_h, \widetilde{\vU}_h) \to (\widetilde{s}, \widetilde{\vU})$ in  $L^2(\Om)\times[L^2(\Om)]^{d \times d}$,  a.e in $\Om$, $(\widetilde{s}_h, \widetilde{\vU}_h) \rightharpoonup (\widetilde{s}, \widetilde{\vU})$ in $H^1(\Om) \times [H^1(\Om)]^{d \times d}$;
\item the limits satisfy $\widetilde{s} = |s| = \tr [\widetilde{\vU}]$, $s = \tr [\vU]$,  a.e. in $\Om$;
\item $\vNN_h \to \vNN$ a.e. in $\Om \setminus \Sing$, and in $L^2(\Om\setminus \Sing)$, and $\vU = s \vNN$, $\widetilde \vU = \widetilde s \vNN$ a.e. in $\Om$;
\item $\vNN$ admits Lebesgue gradient $\nabla\vNN$ a.e. in $\Om \setminus \Sing$ and
$|\nabla\widetilde{\vU}|^2 = |\nabla \widetilde{s}|^2 + \widetilde{s}^2 |\nabla\vNN|^2$ is valid a.e. in $\Omega\setminus\Sing$;
\end{itemize}
where $\LL^{d-1}$ is defined in \eqref{eqn:line-fields} and $\Sing$ in \eqref{eqn:singular_set}.
\end{lemma}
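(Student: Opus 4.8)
The plan is to combine the coercivity estimates of Lemma~\ref{lem:LdG_coercivity} with exact algebraic identities that hold at the mesh nodes, and to upgrade these to almost-everywhere identities in the limit by the usual interpolation-error bookkeeping, following closely the arguments of \cite{Nochetto_SJNA2017}. First I would establish compactness. By Lemma~\ref{lem:LdG_coercivity} the hypothesis $\Eunimain^h[s_h,\vNN_h]\le\Lambda$ makes $s_h,\vU_h,\widetilde s_h,\widetilde\vU_h$ bounded in $H^1$, while the discrete structural condition \eqref{eqn:LdG_struct_condition_discrete}, the constraint $\vNN_h(x_i)\in\LL^{d-1}$ (so $|\vNN_h(x_i)|=1$), and the fact that Lagrange interpolation does not increase the nodal $\ell^\infty$ norm give the uniform bounds $\|s_h\|,\|\widetilde s_h\|\le 1$ and $\|\vU_h\|,\|\widetilde\vU_h\|,\|\vNN_h\|\le 1$ in $L^\infty(\Om)$. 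Rellich and weak $H^1$ compactness then furnish a (not relabeled) subsequence with $(s_h,\vU_h)\rightharpoonup(s,\vU)$ and $(\widetilde s_h,\widetilde\vU_h)\rightharpoonup(\widetilde s,\widetilde\vU)$ weakly in $H^1$, strongly in $L^2$ and a.e.; since all sequences are uniformly bounded, convergence also holds in every $L^p(\Om)$ with $p<\infty$, so products pass to the limit as well. The limits keep the prescribed Dirichlet data on the corresponding parts of $\dOm$, and $\widetilde s\ge 0$ with $\vU,\widetilde\vU$ symmetric, because $\widetilde s_h\ge 0$ and $\vU_h,\widetilde\vU_h$ are symmetric.

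Next I turn to the pointwise structure identities. At every node $\tr[\vU_h(x_i)]=s_h(x_i)\tr[\vNN_h(x_i)]=s_h(x_i)$ and $\tr[\widetilde\vU_h(x_i)]=\widetilde s_h(x_i)$; since in each case both sides are piecewise affine and agree at all vertices, they coincide identically, so $\tr[\vU]=s$ and $\tr[\widetilde\vU]=\widetilde s$ a.e. For $1\le k\le d$ the projection identity $\vNN_h(x_i)^k=\vNN_h(x_i)$ gives $\tr[\vU_h(x_i)^k]=s_h(x_i)^k=(\tr[\vU_h(x_i)])^k$; the globally defined function $\tr[\vU_h^k]-s_h^k$ is, on each simplex, a polynomial of degree at most $d$ vanishing at the $d+1$ vertices, so an inverse estimate together with the uniform $H^1$ bound yields $\|\tr[\vU_h^k]-s_h^k\|_{L^1(\Om)}=O(h)\to 0$, whence in the limit $\tr[\vU^k]=(\tr[\vU])^k$ a.e. for $1\le k\le d$, and Lemma~\ref{lem:rank1_characterization} shows $\vU$ has rank at most one; the same holds for $\widetilde\vU$, which is moreover symmetric with $\tr\widetilde\vU=\widetilde s\ge 0$ and hence positive semidefinite. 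Applying the same $O(h)$-in-$L^1$ device to $\widetilde s_h-|s_h|$, which is nonnegative and bounded on each element by the oscillation of $s_h$, gives $\widetilde s=|s|$ a.e.

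Because $\widetilde\vU$ is symmetric, positive semidefinite and of rank $\le 1$ with $\tr\widetilde\vU=\widetilde s=|s|$, we may write $\widetilde\vU=\vv\otimes\vv$ with $|\vv|^2=\widetilde s$; on $\Om\setminus\Sing=\{|s|>0\}=\{\widetilde s>0\}$ we set $\vNN:=\widetilde\vU/\widetilde s\in\LL^{d-1}$ and extend $\vNN$ arbitrarily, valued in $\LL^{d-1}$, on $\Sing$, so that $\vNN\in L^\infty(\Om;\LL^{d-1})$ and $\widetilde\vU=\widetilde s\,\vNN$ a.e. Since $\vU$ is symmetric of rank $\le 1$ with $\tr\vU=s$, we have $\vU=s\,\vp\otimes\vp$ for a unit vector $\vp$, and the nodal identity $\vU_h(x_i):\widetilde\vU_h(x_i)=s_h(x_i)|s_h(x_i)|$ (treated again by the $O(h)$-in-$L^1$ argument) forces $\vU:\widetilde\vU=s|s|$ a.e., hence $(\vp\cdot\vv/|\vv|)^2=1$ wherever $s\ne 0$, i.e. $\vp\otimes\vp=\vNN$ and $\vU=s\,\vNN$ a.e. (trivially on $\Sing$, where $\vU=0$). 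It remains to prove $\vNN_h\to\vNN$ a.e. in $\Om\setminus\Sing$: on any subregion where the weight $s_h^2$ is bounded below, the second term in \eqref{eqn:LdG_discrete_energy_main} controls $\|\nabla\vNN_h\|_{L^2}$, so by localizing over an exhaustion of $\Om\setminus\Sing$ by superlevel sets of $|s|$ (using $s_h\to s$ a.e. and the weak acuteness \eqref{eqn:weakly-acute} to treat elements straddling these sets, as in \cite[Lemma~3.5]{Nochetto_SJNA2017}), extracting a diagonal subsequence, and exploiting $I_h(\widetilde s_h\vNN_h)=\widetilde\vU_h\to\widetilde s\,\vNN$ with $\widetilde s$ locally bounded below, one obtains $\vNN_h\to\vNN$ a.e. and in $L^2(\Om\setminus\Sing)$.

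Finally, on $\Om\setminus\Sing$ the identity $\widetilde\vU=\widetilde s\,\vNN$ with $\widetilde\vU,\widetilde s\in H^1$ and $\widetilde s$ locally bounded below yields $\vNN\in H^1_{\mathrm{loc}}(\Om\setminus\Sing)$, hence $\vNN$ admits a Lebesgue gradient a.e. there; differentiating $|\vNN|^2=1$ gives $\nabla\vNN:\vNN=0$, so $\nabla\widetilde\vU=\nabla\widetilde s\otimes\vNN+\widetilde s\,\nabla\vNN$ is an orthogonal decomposition and $|\nabla\widetilde\vU|^2=|\nabla\widetilde s|^2+\widetilde s^2|\nabla\vNN|^2$ a.e. in $\Om\setminus\Sing$. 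The main obstacle is precisely the recovery of $\vNN_h\to\vNN$: the gradient $\nabla\vNN_h$ may genuinely blow up near the singular set, so the localization away from the merely measurable set $\Sing$ must be carried out carefully through the weighted energy and the weak acuteness of the mesh; by comparison, the interpolation-error accounting that turns the nodal identities into a.e. identities in the preceding steps is routine once the inverse estimate and the uniform $H^1$ bound are invoked.
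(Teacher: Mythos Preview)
Your proposal is essentially correct and follows the paper's overall architecture, but the step establishing $\vNN_h\to\vNN$ a.e.\ in $\Om\setminus\Sing$ diverges from the paper and is considerably more delicate than necessary. You propose to localize to superlevel sets of $|s|$, argue that $s_h^2$ is eventually bounded below there, deduce local $H^1$ bounds for $\vNN_h$ from the weighted term in \eqref{eqn:LdG_discrete_energy_main}, and run a diagonal argument. This can be made to work, but the superlevel sets are merely measurable, the mesh does not respect them, and the passage from ``$s_h\to s$ a.e.'' to ``$|s_h|\ge\delta/2$ on a fixed region for all small $h$'' requires Egorov-type bookkeeping; as written, this step is sketchy. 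The paper avoids all of this with a one-line trick: since $|\vNN_h|\le 1$, the \emph{global} inverse inequality $\|\nabla\vNN_h\|_{L^2(\Om)}\le Ch^{-1}$ holds trivially, so the interpolation estimate
\[
\|s_h\vNN_h-\vU_h\|_{L^1(\Om)}\le Ch^2\|\nabla s_h\|_{L^2(\Om)}\|\nabla\vNN_h\|_{L^2(\Om)}\le C\Lambda^{1/2}h\to 0
\]
gives $s_h\vNN_h\to\vU$ a.e., whence $\vNN_h=s_h\vNN_h/s_h\to\vU/s=\vNN$ a.e.\ on $\{s\ne0\}$. No localization is needed.

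On two minor points your route is actually cleaner than the paper's. Your observation that $\tr\vU_h$ and $s_h$ are both piecewise affine and agree at nodes, hence coincide identically, is simpler than the paper's interpolation-error argument for $s=\tr\vU$. And your derivation of the orthogonal decomposition of $\nabla\widetilde\vU$ via $\vNN\in H^1_{\mathrm{loc}}(\Om\setminus\Sing)$ (which follows at once from $\vNN=\widetilde\vU/\widetilde s$ with $\widetilde s$ locally bounded below) and differentiation of $|\vNN|^2=1$ is more direct than the paper's explicit Lebesgue-point computation; both yield the same conclusion.
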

\begin{proof}
Because the discrete energy $\Eunimain^h[s_h, \vNN_h]$ is uniformly bounded, Lemma \ref{lem:LdG_coercivity} guarantees that the sequences $(s_h, \vU_h)$ and $(\widetilde{s}_h, \widetilde{\vU}_h)$ are bounded in $H^1(\Om) \times [H^1(\Om)]^{d\times d}$. Thus,
we can extract subsequences (not relabeled) such that 
\[
(s_h, \vU_h) \to (s, \vU) \quad \mbox{and} \quad
(\widetilde{s}_h, \widetilde{\vU}_h) \to (\widetilde{s}, \widetilde{\vU}),
\]
strongly in $L^2(\Om)\times[L^2(\Om)]^{d\times d}$, a.e. in $\Om$, and weakly in $H^1(\Om)\times[H^1(\Om)]^{d\times d}$. The rest of the proof is about characterizing these limits. We proceed in three steps.

{\it Step 1: Trace constraint.}
To show that $\widetilde{s} = |s|$, we use a standard approximation estimate for the Lagrange interpolant and the fact that $|\nabla |s_h|| = |\nabla s_h|$ a.e.:
\[
\| \widetilde{s}_h - |s_h| \|_{L^2(\Om)} = \| I_h |s_h| - |s_h| \|_{L^2(\Om)} \le C h \| \nabla |s_h| \|_{L^2(\Om)} \le C \Lambda h.
\]
This, together with the triangle inequality and the fact that $s_h \to s$, $ \widetilde{s}_h \to  \widetilde{s}$ in $L^2(\Omega)$, give
\[ 
\big| \widetilde{s} - |s| \big| \le \big| \widetilde{s} - \widetilde{s}_h v|
+ \big| \widetilde{s}_h - |s_h| \big| + \big| |s_h| - |s| \big| \to 0
\quad \mbox{as } h \to 0.
\]
Using a similar argument, we can show that $s = \tr[\vU]$ and $\widetilde{s} = \tr [\widetilde{\vU}]$. Indeed, since $s_h = I_h(\tr[\vU_h])$, we have
\[
\| \tr[\vU_h] - s_h  \|_{L^2(\Om)} \le C h \| \nabla (\tr[\vU_h]) \|_{L^2(\Om)} \le C\Lambda h,
\]
and thus
\[
\big| \tr [\vU] - s \big| \le \big| \tr [\vU] - \tr [\vU_h] \big|
+ \big|\tr [\vU_h] - s_h \big| + \big|s_h - s\big|  \to 0 \quad \mbox{as } h \to 0.
\]

\medskip\noindent
{\it Step 2: Rank-one constraint.}
We now show that both $\vU$ and $\widetilde{\vU}$ have rank at most $1$; this is a new issue relative to \cite{Nochetto_SJNA2017}. In order to apply Lemma \ref{lem:rank1_characterization}, it suffices to check that 
\[
s^k = \tr[\vU^k], \quad \widetilde{s}^k = \tr[\widetilde{\vU}^k] \quad \forall k = 2,\ldots, d.
\]
Since the two identities above follow from the same argument, we just prove the first one. Let $2 \le k \le d$. The discrete admissibility condition \eqref{eqn:LdG_struct_condition_discrete} implies that $s_h^k(x_i) = \tr[\vU_h(x_i)^k]$ for all $x_i \in \Nk_h$, whence $I_h(s_h^k) = I_h(\tr[\vU_h^k])$. In a similar fashion as before, we use the triangle inequality to write
\[
\big| s^k - \tr[\vU^k] \big| \le \big| s^k - s_h^k \big| + \big| s_h^k - I_h(s_h^k) \big| + \big|I_h(\tr[\vU_h^k]) - \tr[\vU_h^k] \big| + \big| \tr[\vU_h^k] - \tr[\vU^k] \big|.
\]
The first and last terms in the right hand side tend to $0$ a.e., because $s_h \to s$ and $\vU_h \to \vU$. Next, we note that
$| \nabla s_h^k|  = k |s_h|^{k-1} | \nabla s_h | \le d | \nabla s_h |,$
because $|s_h| \le 1$, whence
\[
\| s_h^k - I_h (s_h^k) \|_{L^2(\Om)} \le C \Lambda h \to 0, \quad \mbox{as } h \to 0.
\]
The estimate
\[
\| I_h (\tr[\vU_h^k]) - \tr[\vU_h^k] \|_{L^2(\Om)} \le C \Lambda h \to 0, \quad \mbox{as } h \to 0,
\]
follows in a similar fashion. This proves that $\vU$ and $\widetilde{\vU}$ have rank $\leq 1$ a.e.

\medskip\noindent
{\it Step 3: Line field $\vNN$.}
Because $s = \tr[\vU]$, it follows that $\rank(\vU) = 1$ if and only if $s\ne 0$. Therefore, we can define a line field $\vNN : \Om\setminus\Sing \to \LL^{d-1}$ by $\vNN = s^{-1} \vU$, and extend $\vNN$ to $\Sing$ by any arbitrary tensor in $\LL^{d-1}$.

We next show that $\vNN_h \to \vNN$ a.e. in $\Om\setminus\Sing$ and in $L^2(\Om\setminus\Sing)$. We note that at every element $T \in \Tk_h$, the second derivatives of $s_h$ and $\vNN_h$ vanish, because these functions are piecewise linear. Thus, $\| s_h \vNN_h - I_h (s_h \vNN_h) \|_{L^1(T)} \le C h^2 \| \nabla s_h \otimes \nabla \vNN_h \|_{L^1(T)}$, and summing over all elements $T \in \Tk_h$, we obtain
\[
\| s_h \vNN_h - I_h (s_h \vNN_h) \|_{L^1(\Om)} \le C h^2 \| \nabla s_h \otimes \nabla \vNN_h \|_{L^1(\Om)} \le C h^2 \| \nabla s_h \|_{L^2(\Om)} \| \nabla \vNN_h \|_{L^2(\Om)}.
\]
Since $|\vNN_h| \le 1$, an inverse inequality yields $ \| \nabla \vNN_h \|_{L^2(\Om)} \le C h^{-1}$ and therefore
\begin{equation} \label{eqn:conv_s_hvNN_h}
\| s_h \vNN_h - I_h (s_h \vNN_h) \|_{L^1(\Om)} \le C \Lambda h \to 0 \quad \mbox{as } h \to 0.
\end{equation}
Noticing that $I_h (s_h \vNN_h) = \vU_h \to \vU$, we deduce that $s_h \vNN_h \to \vU$ a.e. in $\Om$ as $h \to 0$. Since $s_h \to s$ a.e., for almost every $x \in \Om \setminus \Sing$ it holds that $s_h(x) \ne 0$ if $h$ is sufficiently small, and we deduce
\[
\vNN_h (x) = \frac{s_h(x) \vNN_h (x)}{s_h(x)} \to \frac{\vU(x)}{s(x)} = \vNN(x) \quad \mbox{as } h \to 0.
\]
Convergence $\vNN_h \to \vNN$ in $L^2(\Om\setminus \Sing)$ now follows by the Dominated Convergence Theorem, as $|\vNN_h| \le 1$. Finally, to prove that $\widetilde \vU = \widetilde s \vNN$ a.e. in $\Om$, in the same fashion as \eqref{eqn:conv_s_hvNN_h} we can show that $\| \widetilde{s}_h \vNN_h - I_h (\widetilde{s}_h \vNN_h) \|_{L^1(\Om)} \to 0$ as $h \to 0$ which, recalling that $\widetilde{\vU}_h = I_h(\widetilde{s}_h \vNN_h) \to \widetilde{\vU}$, gives $\widetilde{s}_h \vNN_h \to \widetilde{\vU}$. Because $\widetilde{s}_h \to \widetilde{s}$ and $\vNN_h \to \vNN$ a.e. in $\Om \setminus \Sing$, it follows that $\widetilde{\vU} = \widetilde{s}\vNN$ a.e. in $\Om$.

\medskip
{\it Step 4: Lebesgue gradient and orthogonality.} At the Lebesgue points of $(\widetilde{s},\widetilde{\vU})$ and their weak gradients $(\nabla \widetilde{s}, \nabla\widetilde{\vU})$, the first order Taylor expansions exist and define superlinear approximations of $(\widetilde{s},\widetilde{\vU})$ in the $L^2$ sense \cite[Chapter 6.1.2]{EvansGariepy_book2015}. This defines $L^2$-gradients for $(\widetilde{s}.\widetilde{\vU})$ which coincide with the weak gradients. At each Lebesgue point $x\in\Omega\setminus\Sing$ of $(\widetilde{s},\vNN,\widetilde{\vU},\nabla \widetilde{s},\nabla\widetilde{\vU})$ we define the quantity $\nabla\vNN(x)$ to be
\[
\nabla \vNN(x) := \frac{\nabla \widetilde{\vU}(x) - \nabla \widetilde{s}(x) \otimes \vNN(x)}{\widetilde{s}(x)}.
\]
To verify that  $\nabla\vNN(x)$ is the $L^2$-gradient of $\vNN$ at $x$, we have to show that the first order Taylor expansion around $y=x$ gives a superlinear approximation of $\vNN(y)$ in the $L^2$ sense. Therefore, we let $B_\eps(x)$ denote the ball centered at $x$ of radius $\eps$ and observe that
\begin{align*}
  \fint_{B_\eps(x)} \Big| \vNN(y) - \vNN(x) - \nabla\vNN(x) (y-x) \Big|^2 dy
  & \lesssim \frac{1}{\widetilde{s}(x)^2} \fint_{B_\eps(x)} \Big| \widetilde{\vU}(y) - \widetilde{\vU}(x) - \nabla\widetilde{\vU}(x) (y-x)  \Big|^2 dy
  \\
  & + \frac{1}{\widetilde{s}(x)^2} \fint_{B_\eps(x)} \Big| \widetilde{s}(y) - \widetilde{s}(x) -\nabla \widetilde{s}(x) (y-x)\Big|^2 \big|\vNN(y)\big|^2 dy
  \\
  & + \frac{\big|\nabla \widetilde{s}(x)\big|^2}{\widetilde{s}(x)^2} \fint_{B_\eps(x)}  \Big|\vNN(y) - \vNN(x) \Big|^2 \,|y-x|^2 dy = o(\eps^2)
\end{align*}  
as $\eps\to0$ because the first order Taylor expansions of $(\widetilde{s},\widetilde{\vU})$ converge superlinearly at $x$, which is a Lebesgue point of $\vNN$ that belongs to $L^\infty(\Omega)$, and $\widetilde{s}(x)>0$ and $\nabla\widetilde{s}(x)$ are fixed.

We next claim that $\nabla\vNN : \nabla \widetilde{s} \otimes\vNN = 0$ and note that this is true if and only if $\nabla\widetilde{\vU} : \nabla \widetilde{s} \otimes \vNN=|\nabla \widetilde{s} \otimes \vNN|^2$ at any Lebesgue point $x \in \Om\setminus\Sing$ as above. To see this, we compute at $x$
\[
|\nabla \widetilde{s} \otimes \vNN|^2 = \sum_{i,j,k=1}^d (\partial_i \widetilde{s})^2 (\vNN_{j,k})^2 \sum_{i=1}^d (\partial_i \widetilde{s})^2 = |\nabla \widetilde{s}|^2,
\]
and
\begin{align*}
\nabla\widetilde{\vU} : \nabla \widetilde{s} \otimes \vNN &=
\sum_{i,j,k=1}^d \partial_i\widetilde{\vU}_{j,k} \, \partial_i \widetilde{s} \,\vNN_{j,k}
= \frac{1}{\widetilde{s}} \sum_{i=1}^d \partial_i \widetilde{s} \sum_{j,k=1}^d \partial_i \widetilde{\vU}_{j,k} \, \widetilde{s} \vNN_{j,k}
\\
& = \frac{1}{\widetilde{s}} \sum_{i=1}^d \partial_i \widetilde{s} \sum_{j,k=1}^d
\partial_i \widetilde{\vU}_{j,k} \, \widetilde{\vU}_{j,k} = \frac{1}{2\widetilde{s}} \sum_{i=1}^d \partial_i \widetilde{s} \, \partial_i |\widetilde{\vU}|^2 = \frac{1}{2\widetilde{s}} \sum_{i=1}^d \partial_i \widetilde{s} \, \partial_i \widetilde{s}^2
= \sum_{i=1}^d (\partial_i \widetilde{s})^2 = |\nabla \widetilde{s}|^2.
\end{align*}
This shows the orthogonality relation $|\nabla\widetilde{\vU}|^2=|\nabla \widetilde{s}|^2 + \widetilde{s}^2 |\nabla\vNN|^2$
at every Lebesgue point $x\in\Omega\setminus\Sing$ of $(\widetilde{s},\vNN,\widetilde{\vU},\nabla \widetilde{s},\nabla\widetilde{\vU})$, and concludes the proof.
\end{proof}

%---------------------------------------------------------------------------
\subsection{$\Gamma$-convergence}
%---------------------------------------------------------------------------
We have collected all the elements needed to prove the main theoretical result of this work. Using a standard argument \cite{Braides_book2002,Braides_book2014,DalMaso_book1993}, we can prove the convergence of discrete global minimizers.

\begin{theorem}[convergence of discrete global minimizers] \label{thm:LdG_convergence}
  Let $(s_h, \vNN_h, \vU_h) \in \Admisuni^h(g_h, \vR_h)$ be a sequence of global minimizers of the discrete total energy $\Eunit^h$ defined in \eqref{eqn:LdG_discrete_energy}. Then, every cluster point $(s,\vNN,\vU)$ belongs to $\Admisuni(g, \vR)$ and $(s,\vU)$ is a global minimizer of the continuous total energy $\EuniUtotal$ given in \eqref{eqn:total-energy-U}. Moreover, $\vNN$ admits a Lebesgue gradient a.e. in the set $\Omega\setminus\Sing$ so that the continuous main energy
\[
\Eunimain[s,\vNN] := \frac{d-1}{d} \int_{\Omega\setminus\Sing} |\nabla s|^2 + \frac12 \int_{\Omega\setminus\Sing} s^2 |\nabla \vNN|^2
\]    
is well defined and satisfies $\Eunimain[s,\vNN] = \EuniUmain[s,\vU]$.
\end{theorem}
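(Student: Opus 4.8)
The plan is to run the classical $\Gamma$-convergence-plus-equicoercivity argument, assembling the pieces already prepared in Sections \ref{sec:LdG_discretization}--\ref{sec:LdG_Gamma_conv}. First I would fix an arbitrary competitor $(\bar s,\bar\vNN,\bar\vU)\in\Admisuni(g,\vR)$ and its recovery sequence $(\bar s_h,\bar\vNN_h,\bar\vU_h)\in\Admisuni^h(g_h,\vR_h)$ from Proposition \ref{prop:LdG_regularization} and Lemma \ref{lem:LdG_limsup}; since the discrete degrees of orientation stay in $[-\tfrac1{d-1},1]$ and $\BulkLdG$ is continuous, the bulk part converges by dominated convergence, so after a standard diagonal argument in $\eps$ and $h$ one obtains $\limsup_h\Eunit^h[\bar s_h,\bar\vNN_h]\le\Eunit[\bar s,\bar\vNN]<\infty$. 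Minimality of $(s_h,\vNN_h,\vU_h)$ together with $\Ebulk^h\ge0$ then gives a uniform bound $\Eunimain^h[s_h,\vNN_h]\le\Lambda$, so Lemma \ref{lem:LdG_coercivity} furnishes uniform $H^1$ bounds for $(s_h,\vU_h)$ and for $(\widetilde s_h,\widetilde\vU_h)$, and Lemma \ref{lem:LdG_limits} applies verbatim: along a subsequence one gets $(s,\vU),(\widetilde s,\widetilde\vU)\in H^1(\Om)\times[H^1(\Om)]^{d\times d}$ and $\vNN\in L^\infty(\Om;\LL^{d-1})$ with $\vU=s\vNN$, $\widetilde s=|s|=\tr[\widetilde\vU]$, $s=\tr[\vU]$, $\vNN_h\to\vNN$ a.e.\ and in $L^2(\Om\setminus\Sing)$, the Lebesgue gradient $\nabla\vNN$ on $\Om\setminus\Sing$, and the orthogonality $|\nabla\widetilde\vU|^2=|\nabla\widetilde s|^2+\widetilde s^2|\nabla\vNN|^2$ a.e.\ in $\Om\setminus\Sing$. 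The Dirichlet conditions pass to the limit since $g_h=I_hg\to g$ and $\vR_h=I_h\vR\to\vR$ uniformly and traces are weakly continuous on $H^1$, so $(s,\vNN,\vU)\in\Admisuni(g,\vR)$; moreover running the argument of Step 4 of the proof of Lemma \ref{lem:LdG_limits} directly on the triple $(s,\vNN,\vU)$ (using $s\ne0$ on $\Om\setminus\Sing$) shows that $\Eunimain[s,\vNN]$ in the statement is well defined and equals $\EuniUmain[s,\vU]$, and the same argument gives $\Eunimain=\EuniUmain$ on all of $\Admisuni(g,\vR)$, whence $\Eunit=\EuniUtotal$ there.

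Next I would prove the lim-inf inequality $\liminf_{h\to0}\Eunit^h[s_h,\vNN_h]\ge\EuniUtotal[s,\vU]$. Using \eqref{eqn:energyequality_tilde} and $\consistULdG_h\ge0$ one discards the consistency term to get $\Eunimain^h[s_h,\vNN_h]\ge\EuniUmain^h[\widetilde s_h,\widetilde\vU_h]$. Since $\tr[\widetilde\vU_h]$ and $\widetilde s_h=I_h(|s_h|)$ are piecewise linear and both agree with the nonnegative numbers $|s_h(x_i)|$ at every node of $\Nk_h$, one has $I_h|\tr[\widetilde\vU_h]|=\widetilde s_h$, so by \eqref{eqn:EuniUmain} the quantity $2\,\EuniUmain^h[\widetilde s_h,\widetilde\vU_h]$ is exactly the discrete functional of Lemma \ref{lem:LdG_wlsc} evaluated at $\vW_h=\widetilde\vU_h$; since $\widetilde\vU_h\rightharpoonup\widetilde\vU$ in $[H^1(\Om)]^{d\times d}$ and $\tr[\widetilde\vU]=\widetilde s\ge0$, Lemma \ref{lem:LdG_wlsc} gives $\liminf_h\EuniUmain^h[\widetilde s_h,\widetilde\vU_h]\ge\EuniUmain[\widetilde s,\widetilde\vU]$. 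Using the orthogonality relation and that $\nabla\widetilde s$ and $\nabla\widetilde\vU$ vanish a.e.\ on $\Sing$, one rewrites $\EuniUmain[\widetilde s,\widetilde\vU]=\tfrac{d-1}{2d}\int_{\Om\setminus\Sing}|\nabla\widetilde s|^2+\tfrac12\int_{\Om\setminus\Sing}\widetilde s^2|\nabla\vNN|^2=\Eunimain[s,\vNN]=\EuniUmain[s,\vU]$, via $|\nabla\widetilde s|=|\nabla s|$ and $\widetilde s^2=s^2$. For the bulk term, $s_h\to s$ a.e.\ and $\BulkLdG(s_h)$ is uniformly bounded by the confinement in \eqref{eqn:LdG_struct_condition_discrete}, so $\Ebulk^h[s_h]\to\Ebulk[s]$ by dominated convergence. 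Adding these contributions yields $\liminf_h\Eunit^h[s_h,\vNN_h]\ge\EuniUmain[s,\vU]+\Ebulk[s]=\EuniUtotal[s,\vU]$.

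Finally, for an arbitrary admissible $(\bar s,\bar\vNN,\bar\vU)\in\Admisuni(g,\vR)$, the recovery sequence from the first paragraph satisfies $\limsup_h\Eunit^h[\bar s_h,\bar\vNN_h]\le\Eunit[\bar s,\bar\vNN]=\EuniUtotal[\bar s,\bar\vU]$. Minimality of $(s_h,\vNN_h,\vU_h)$ gives $\Eunit^h[s_h,\vNN_h]\le\Eunit^h[\bar s_h,\bar\vNN_h]$; taking $\liminf$ on the left, $\limsup$ on the right, and combining with the previous paragraph yields $\EuniUtotal[s,\vU]\le\EuniUtotal[\bar s,\bar\vU]$. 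As $(\bar s,\bar\vNN,\bar\vU)$ is arbitrary, $(s,\vU)$ is a global minimizer of $\EuniUtotal$ over $\Admisuni(g,\vR)$, and the identity $\Eunimain[s,\vNN]=\EuniUmain[s,\vU]$ was recorded in the first paragraph.

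\textbf{Main obstacle.} The delicate point is the lim-inf inequality, because $\EuniUmain$ carries the wrong-sign term $-\tfrac1{2d}\iO|\nabla s|^2$, so neither convexity nor weak lower semicontinuity is manifest, and at the discrete level the rescuing identity $|s_h|=|\vU_h|$ holds only at the nodes while $\nabla\widetilde s_h\ne\nabla|s_h|$. The argument threads through this by (i) trading $(s_h,\vU_h)$ for the nodewise-consistent pair $(\widetilde s_h,\widetilde\vU_h)$ at the price of the nonnegative consistency term controlled by Lemma \ref{lem:energy_inequality}; (ii) flattening $\widetilde\vU_h$ into a vector so that the Euclidean norm of the gradient of the flattened object equals $|\nabla\widetilde\vU_h|$ and invoking Lemma \ref{lem:LdG_wlsc}, whose hypothesis $I_h|\tr[\widetilde\vU_h]|=\widetilde s_h$ must be checked carefully; and (iii) recovering the eikonal-type orthogonality $|\nabla\widetilde\vU|^2=|\nabla\widetilde s|^2+\widetilde s^2|\nabla\vNN|^2$ only on $\Om\setminus\Sing$, which suffices precisely because both $\nabla\widetilde s$ and $\nabla\widetilde\vU$ vanish a.e.\ on $\Sing$. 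The remaining ingredients (the uniform energy bound, the compactness of Lemma \ref{lem:LdG_limits}, the recovery sequence, and the passage to the limit in the bulk term) are routine once these lemmas are in hand.
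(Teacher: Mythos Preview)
Your proposal is correct and follows essentially the same route as the paper: a uniform energy bound from the recovery sequence, compactness and characterization via Lemmas \ref{lem:LdG_coercivity} and \ref{lem:LdG_limits}, the lim-inf inequality by passing through $(\widetilde s_h,\widetilde\vU_h)$ and invoking Lemma \ref{lem:LdG_wlsc}, the orthogonality identity on $\Om\setminus\Sing$ to identify $\EuniUmain[\widetilde s,\widetilde\vU]=\Eunimain[s,\vNN]=\EuniUmain[s,\vU]$, and finally the comparison with an arbitrary competitor via the recovery construction. The only cosmetic differences are that the paper uses Fatou's lemma for the bulk term in the lim-inf step (you use dominated convergence, which also works since $\BulkLdG$ is a quartic bounded on $[-\tfrac{1}{d-1},1]$), and that you are a bit more explicit about the boundary traces and the verification $I_h|\tr[\widetilde\vU_h]|=\widetilde s_h$.
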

\begin{proof}
If $\lim_{h \to 0} \Euni^h[s_h, \vNN_h] = \infty$, then $\Admisuni(g,\vR)$ is empty because otherwise Lemma \ref{lem:LdG_limsup} (lim-sup inequality) would imply the existence of a triple $(s_h,\vNN_h,\vU_h)\in\Admisuni^h(g_h,\vR_h)$ with uniformly bounded discrete total energy $\Euni^h[s_h, \vNN_h]$. In this case there is nothing to prove. We thus assume there is some $\Lambda >0$ such that
\[
\limsup_{h\to 0} \Eunit^h[s_h,\vNN_h] \le \Lambda.
\]
Applying Lemma \ref{lem:LdG_coercivity} (coercivity) and Lemma \ref{lem:LdG_limits} (characterization of limits), we can extract subsequences $(s_h, \vU_h) \to (s, \vU)$, $(\widetilde{s}_h, \widetilde{\vU}_h) \to (\widetilde{s}, \widetilde{\vU})$,  converging a.e. in $\Om$, strongly in $L^2(\Om)\times[L^2(\Om)]^{d\times d}$ and weakly in $H^1(\Om)\times[H^1(\Om)]^{d\times d}$, and such that the limits satisfy the structural condition \eqref{eqn:LdG_structural_conditions}. By Lemma \ref{lem:LdG_wlsc} (weak lower semi-continuity) and the energy inequality \eqref{eqn:energyequality_tilde}, we have
\[ \begin{split}
\EuniUmain[\widetilde{s}, \widetilde{\vU}] & = -\frac{1}{2d} \iO \big| \nabla |\tr [\widetilde{\vU}]| \big |^2 dx + \frac12 \iO \big| \nabla \widetilde{\vU} \big|^2 dx \\
& \le \liminf_{h \to 0} \left( -\frac{1}{2d} \iO | \nabla \tr [\widetilde{\vU}_h] |^2 dx + \frac12 \iO | \nabla \widetilde{\vU}_h |^2 dx \right) \le \liminf_{h \to 0} \Eunimain^h[s_h, \vNN_h].
\end{split}\]
Moreover, $\BulkLdG(s_h) \to \BulkLdG(s)$ a.e. in $\Om$ because $s_h \to s$ a.e., whence applying Fatou's Lemma yields
\[
\Ebulk[s] = \frac{1}{\Bulkcoef}  \iO \BulkLdG(s) dx \le \liminf_{h \to 0} \iO \frac{1}{\Bulkcoef}  \BulkLdG(s_h) dx = \liminf_{h \to 0} \Ebulk^h[s_h].
\]
We have thus shown that 
\begin{equation} \label{eqn:limit_Eunih}
\EuniUmain[\widetilde{s}, \widetilde{\vU}]  + \Ebulk[s] \le \liminf_{h \to 0} \Big(\Eunimain^h[s_h, \vNN_h] + \Ebulk^h[s_h]\Big) = \liminf_{h \to 0} \Eunit^h[s_h, \vNN_h].
\end{equation}

Next, we prove that $\EuniUmain[\widetilde{s}, \widetilde{\vU}] = \Eunimain[s, \vNN]$. This follows from the orthogonality relation $|\nabla \widetilde{\vU}|^2 = |\nabla \widetilde{s}|^2 + \widetilde{s}^2 |\nabla\vNN|^2$ of Lemma \ref{lem:LdG_limits} (characterization of limits), valid a.e. in $\Om\setminus\Sing$, as well as $|\nabla \widetilde{\vU}|=|\nabla \widetilde{s}|=0$ a.e. in $\Sing$ \cite[Ch. 5, Exercise 17]{Evans:book}. Therefore, making use of properties $\widetilde{s}=|s|$ (from Lemma \ref{lem:LdG_limits}) and $|\nabla \widetilde{s}| = |\nabla s|$, we infer that
\begin{equation*}
\EuniUmain[\widetilde{s},\widetilde{\vU}] =
-\frac{1}{2d} \int_{\Omega\setminus\Sing} |\nabla \widetilde{s}|^2 + \frac{1}{2} \int_{\Omega\setminus\Sing} |\nabla\widetilde{\vU}|^2
= \frac{d-1}{2 d} \int_{\Omega\setminus\Sing} |\nabla \widetilde{s}|^2 + \frac12 \int_{\Omega\setminus\Sing} \widetilde{s}^2 |\nabla \vNN|^2 = \Eunimain[s,\vNN]
\end{equation*}
This, together with \eqref{eqn:limit_Eunih}, shows that the total energy satisfies
\begin{equation} \label{eqn:limit_minimizer}
\Eunit[s,\vNN] \le \liminf_{h \to 0} \Eunit^h[s_h, \vNN_h] .
\end{equation}

Next, given $\eps >0$, we consider $(t, \vN, \vV) \in \Admisuni(g,\vR)$ such that
\[
\Eunit[t, \vN] \le \inf_{(\vt', \vN'), \in \Admisuni(g,\vR)}\Eunit[t', \vN'] + \eps/2
\]
and, in view of Proposition \ref{prop:LdG_regularization}, we can take $(t_\eps, \vN_\eps, \vV_\eps) \in \Admisuni(g,\vR)$ with $(t_\eps, \vV_\eps) \in W^{1,\infty}(\Om) \times [W^{1,\infty}(\Om)]^{d\times d}$ such that
\[ \begin{split}
\Eunimain[t_\eps, \vN_\eps] & = \EuniUmain[t_\eps, \vV_\eps] \le \EuniUmain[t, \vV] + \eps/4 =  \Eunimain[t, \vN] + \eps/4.
\end{split} \]
Moreover, because $t_\eps \to t$ a.e. in $\Om$ so does $\BulkLdG(t_\eps)\to\BulkLdG(t)$. Since \eqref{eqn:psi_nondegenerate} and \eqref{eqn:LdG_bound_s_eps} imply that $|\BulkLdG(t_\eps)|$ is uniformly bounded, we can apply the Dominated Convergence Theorem to deduce that
\[
\Ebulk[t] = \frac{1}{\Bulkcoef} \iO \lim_{\eps \to 0} \BulkLdG(t_\eps) \,  dx = \lim_{\eps \to 0}  \frac{1}{\Bulkcoef}  \iO \BulkLdG(t) \, dx = \lim_{\eps \to 0} \Ebulk[t_\eps]  .
\]
Therefore, we can find $(t_\eps,\vN_\eps,\vV_\eps) \in \Admisuni(g,\vR)$ such that
\begin{equation} \label{eqn:quasi_minimizer}
\Eunit[t_\eps, \vN_\eps] \le  \inf_{(\vt', \vN'), \in \Admisuni(g,\vR)}\Eunit[t', \vN'] + \eps.
\end{equation}

We next consider the  Lagrange interpolants $t_{\eps,h}=I_h (t_\eps),\vV_{\eps,h} = I_h(\vV_\eps)$, and set $\vN_{\eps,h}(x_i) = \vV_\eps(x_i)/t_\eps(x_i)$ if $t_\eps(x_i) \ne 0$ and $\vN_{\eps,h}(x_i)$ equal to any tensor in $\LL^{d-1}$ otherwise. By the same arguments as before, it follows that
\[
\Ebulk[t_\eps] = \frac{1}{\Bulkcoef}  \iO \lim_{h \to 0} \BulkLdG (t_{\eps,h}) dx = \lim_{h \to 0} \frac{1}{\Bulkcoef}  \iO \BulkLdG (t_{\eps,h}) dx = \lim_{h \to 0} \Ebulk^h[t_{\eps,h}] .
\]
Using Lemma \ref{lem:LdG_limsup} (lim-sup inequality) in conjunction with this estimate, we arrive at 
\[
\Eunit[t_\eps, \vN_\eps] = \lim_{h\to 0} \Eunit^h[t_{\eps,h}, \vN_{\eps,h}],
\]
and therefore, by \eqref{eqn:limit_minimizer} and \eqref{eqn:quasi_minimizer}, the total energies verify
\[
\Eunit[s,\vNN] \le \liminf_{h\to 0} \Eunit^h[s_h, \vNN_h] \le \lim_{h\to 0} \Eunit^h[t_{\eps,h}, \vN_{\eps,h}] \le
\inf_{(\vt', \vN'), \in \Admisuni(g,\vR)}\Eunit[t', \vN'] + \eps.
\]
Since $\eps > 0$ is arbitrary, this proves that $(s,\vNN)$ is a global minimizer of $\Eunit$.
\end{proof}

In case there is a unique global minimizer of the continuous total energy $\Eunit$, Theorem \ref{thm:LdG_convergence} implies that the entire sequence of discrete global energy minimizers converges to it strongly in $L^2$ and weakly in $H^1$. We also point out that a well-known result in $\Gamma$-convergence theory \cite{Kohn_PRSE1989} guarantees that, for every isolated local minimizer of $\Eunit$ there is a sequence of local minimizers of $\Eunit^h$ that converges to it in the same sense. However, in either case, because of the lack of continuous dependence on data as well as regularity theory, we cannot derive convergence rates.

%%%%%%%%%%%%%%%%%%%%%%%%%%%%%%%%%%%%%%%%%%%%%%%%%%%%%%%%%%%%%%%%%%%%%%%%%%%%
\section{Computation of discrete minimizers} \label{sec:LdG_Grad_Flow}
%%%%%%%%%%%%%%%%%%%%%%%%%%%%%%%%%%%%%%%%%%%%%%%%%%%%%%%%%%%%%%%%%%%%%%%%%%%%

We next discuss a gradient flow algorithm for the computation of discrete minimizers. Recall that, according to \eqref{eqn:LdG_discrete_energy}, we write the discrete total energy as 
\[
 \Eunit^h[s_h, \vNN_h] = \Eunimain^h [s_h, \vNN_h] + \Ebulk^h [s_h],
\]
with main and bulk energies
\begin{equation*}
\begin{split}
\Eunimain^h [s_h, \vNN_h] = \frac{d-1}{4d} \sum_{i, j = 1}^N k_{ij} \left( \dij s_h \right)^2 + \Euniring^h [s_h, \vNN_h], \quad \Ebulk^h [s_h] = \frac{1}{\Bulkcoef} \iO \BulkLdG (s_h) dx,
\end{split}
\end{equation*}
where $\Euniring^h [s_h, \vNN_h]$ is the interaction energy
\[
\Euniring^h [s_h, \vNN_h] = \frac{1}{4} \sum_{i, j = 1}^N k_{ij} \left(\frac{s_h(x_i)^2 + s_h(x_j)^2}{2}\right) |\dij \vNN_h|^2.
\]

\noindent
{\bf Tangential variations.}
The algorithm we discuss here is an alternating direction method that, at each step $k\ge0$, first performs a tangential variation on the current line field $\vNN_h=\vn_h^k\otimes\vn_h^k$, then normalizes the update, and finally performs a gradient flow step on the current degree of orientation $s_h$. The director field $\vn_h^k$ belongs to
\[
\Nh = \{ \vv_h \in [\Sh]^d : \vv_h (x_i) \in \Sp^{d-1} \ \forall x_i \in \Nk_h \},
\]
whereas a tangential variation $\vt_h^k$ belongs to the space
\[
\Vh (\vn_h^k) = \{ \vv_h \in [\Sh]^d : \vv_h (x_i) \cdot \vn_h^k(x_i) = 0 \ \forall x_i \in \Nk_h \}. 
\]
It is easy to see that tangential variations $\vT^k_h$ of $\vNN_h^k$ are of the form
\[
\vT^k_h = \vn^k_h \otimes \vt^k_h + \vt^k_h \otimes \vn^k_h
\]
with $\vt_h^k \in \Vh(\vn_h^k)$. However, in our algorithm we shall update the line field $\widehat{\vNN}_h^{k+1}$ by
\[
\widehat{\vNN}_h^{k+1} = \big( \vn_h^k + \vt_h^k \big) \otimes \big( \vn_h^k + \vt_h^k \big)
= \vNN_h^k + \vT_h^k + \vt_h^k \otimes \vt_h^k.
\]
The extra quadratic term can be handled if we have control of $\vt_h^k$ in an $H^1(\Om)$-type space. This dictates the metric of the gradient flow. Bartels and Raisch first proposed the metric $H^1(\Omega)$ provided $s_h^k>0$ is constant \cite{Bartels_bookch2014}. In our case, $s_h^k$ may vary across the domain and may even vanish to allow for the formation of defects. Near the singular set, where $s_h^k$ is small, it is critical to allow for relatively large variations $\vt_h^k$ in order to accelerate the algorithm. We achieve this via the weight $\omega=(s_h^k)^2$ and corresponding weighted $H^1$-norm
\begin{equation}\label{eqn:weighed-H1}
  \| \vv \|_{H^1_{\omega}(\Om)} := \left( \int_\Om |\vv(x)|^2 \, dx +
  \int_\Om |\nabla\vv(x)|^2 \, \omega(x) \, dx \right)^{1/2}.
\end{equation}
Moreover, $\vt_h^k$ must vanish on the Dirichlet part $\Gamma_\vNN=\Gamma_\vU$ of the boundary so that $\widehat\vNN_h^{k+1} = \vM$ on $\Gamma_\vNN$. We thus introduce the subspace $H^1_{\Gamma_\vNN}(\Om)$ of $H^1(\Omega)$ of functions with vanishing trace on $\Gamma_\vNN$. 

\medskip\noindent
{\bf Discrete gradient flow.} The algorithm reads as follows.
Given $(s_h^0, \vNN_h^0,\vU_h^0) \in \Admisuni^h(g_h, \vR_h)$, with $\vNN_h^0 = \vn_h^0 \otimes \vn_h^0$, and a time step $\tau > 0$, iterate Steps 1--3 for $k \ge 0$:

\begin{enumerate}[1.]
\item {\em Weighted tangent flow step for $\vNN_h$:} find $\vt^k_h \in \Vh(\vn_h^k) \cap [H^1_{\Gamma_\vNN}(\Om)]^d$ and $\vT^k_h = \vn^k_h \otimes \vt^k_h + \vt^k_h \otimes \vn^k_h$ such that
\begin{equation} \label{eqn:flow_theta}
\frac{1}{\tau}\int_\Omega \big( \vt^k_h \cdot \vv_h + \nabla \vt^k_h \colon \nabla \vv_h  |s^k_h|^2 \big) + \delta_{\vNN_h} \Euniring^h [s^k_h, \vNN^k_h + \vT^k_h; \vV_h] = 0
\end{equation}
for all $\vV_h = \vn^k_h \otimes \vv_h + \vv_h \otimes \vn^k_h, \ \vv_h \in  \Vh(\vn_h^k) \cap [H^1_{\Gamma_\vNN}(\Om)]^d$.
\smallskip

\item {\em Projection:} update $\vNN_h^{k+1} \in \THh$ by
\begin{equation} \label{eqn:vNN_update}
\vNN_h^{k+1}(x_i) := \frac{\vn_h^k(x_i) + \vt_h^k(x_i)}{|\vn_h^k(x_i) + \vt_h^k(x_i)|} \otimes \frac{\vn_h^k(x_i) +  \vt_h^k(x_i)}{|\vn_h^k(x_i) + \vt_h^k(x_i)|} \quad \forall x_i \in \Nk_h.
\end{equation}

\item {\em Gradient flow step for $s_h$:} find $s_h^{k+1} \in \Sh(g_h)$ such that
\begin{equation} \label{eqn:flow_s}
\frac{1}{\tau} \int_\Omega (s_h^{k+1} - s_h^k) \, z_h + \delta_{s_h} \Eunit^h[s_h^{k+1}, \vNN_h^{k+1} ; z_h ] = 0 \quad \forall z_h\in\Sh(0).
\end{equation}
\end{enumerate}
The symbols $\delta_{\vNN_h} \Eunimain^h$ and $\delta_{s_h} \Eunimain^h$ stand for the standard first variations of these functionals, whereas $\delta_{s_h} \Ebulk^h$ uses the following convex splitting method \cite{Shen_SJSC2010, Wise_SJNA2009} to obtain an unconditionally stable scheme. Let $\psi_c$, $\psi_e$ be convex functions so that the double-well potential splits as $\BulkLdG(s) = \psi_c(s) - \psi_e(s)$ and take
\begin{equation} \label{eqn:convex_concave}
\delta_{s_h} \Ebulk^h [s_h^{k+1}; z_h] := \frac{1}{\Bulkcoef} \iO \big( \psi_c' (s_h^{k+1}) - \psi_e' (s_h^{k}) \big) z_h \, dx \quad\forall \, z_h\in\Sh(0).
\end{equation}

\medskip\noindent
{\bf Energy decrease property.}
Note that the discrete interaction energy \eqref{eqn:LdG_def_Euniringh} can be written equivalently as
\[
\Euniring^h [s_h^k , \vNN_h^k] = \frac18 \sum_{i,j} k_{ij} \Big(s_h(x_1)^2+s_h(x_j)^2 \Big) \Big(1 - \vNN_h^k(x_i) \colon \vNN_h^k(x_j) \Big).
\]
To show that Step 2 decreases this energy, namely
\begin{equation} \label{eqn:decreasing_energy}
\Euniring^h[s_h^k, \vNN_h^{k+1}] \le \Euniring^h[s_h^k, \vttNN_h^{k+1}],
\end{equation}
we recall that $k_{ij}\ge0$ if $i\ne j$ and invoke the following result from \cite[Lemmas 3 and 4]{Bartels_bookch2014}, but omit its proof.

\begin{lemma}[monotonicity] \label{lem:normalization}
Let the mesh $\Tk_h$ be weakly acute (cf. \eqref{eqn:weakly-acute}) and let $\vv_h \in \Uh$ be such that $|\vv_h(x_i)| \ge 1$ for all $x_i \in \Nk_h$. The discrete tensor fields $\vV_h,\widetilde{\vV}_h \in \Uh$,
\[
\vV_h(x_i) = \vv_h(x_i) \otimes \vv_h(x_i), \quad \widetilde{\vV}_h(x_i) = \frac{\vv_h(x_i)}{|\vv_h(x_i)|} \otimes \frac{\vv_h(x_i)}{|\vv_h(x_i)|}.
\]
satisfy the inequality
\[
1 -  \vV_h(x_i) \colon \vV_h(x_j) \le \frac12 | \dij \widetilde{\vV}_h |^2.
\]
\end{lemma}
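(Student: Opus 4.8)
Since the asserted inequality is pointwise at the nodes of $\Tk_h$, I would fix two distinct vertices $x_i, x_j \in \Nk_h$ and set $\vv_i := \vv_h(x_i)$, $\vv_j := \vv_h(x_j)$, together with the unit vectors $\vn_i := \vv_i / |\vv_i|$ and $\vn_j := \vv_j / |\vv_j|$, which are well defined because $|\vv_i|, |\vv_j| \ge 1$. Then $\vV_h(x_i) = \vv_i \otimes \vv_i$, $\widetilde{\vV}_h(x_i) = \vn_i \otimes \vn_i$, and similarly at $x_j$. Note that the mesh acuteness hypothesis plays no role in this pointwise estimate itself; it enters only afterwards, when the inequality is multiplied by $k_{ij} \ge 0$ and summed to deduce the energy decrease \eqref{eqn:decreasing_energy}.

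The plan is to evaluate both sides of the claimed bound explicitly using the elementary Frobenius identities $(\va \otimes \va) \colon (\vb \otimes \vb) = (\va \cdot \vb)^2$ and $|\va \otimes \vb|^2 = |\va|^2 |\vb|^2$. For the right-hand side,
\[
\frac12 |\dij \widetilde{\vV}_h|^2
= \frac12 \big( |\vn_i|^4 + |\vn_j|^4 - 2 (\vn_i \cdot \vn_j)^2 \big)
= 1 - (\vn_i \cdot \vn_j)^2,
\]
since $|\vn_i| = |\vn_j| = 1$; for the left-hand side,
\[
1 - \vV_h(x_i) \colon \vV_h(x_j) = 1 - (\vv_i \cdot \vv_j)^2 .
\]

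Comparing the two displays, the lemma reduces to the single inequality $(\vv_i \cdot \vv_j)^2 \ge (\vn_i \cdot \vn_j)^2$, which is exactly where the normalization hypothesis $|\vv_i|, |\vv_j| \ge 1$ is needed: writing $\vv_i \cdot \vv_j = |\vv_i|\, |\vv_j|\, (\vn_i \cdot \vn_j)$ gives $(\vv_i \cdot \vv_j)^2 = |\vv_i|^2 |\vv_j|^2 (\vn_i \cdot \vn_j)^2 \ge (\vn_i \cdot \vn_j)^2$. Chaining the three relations yields $1 - \vV_h(x_i) \colon \vV_h(x_j) \le 1 - (\vn_i \cdot \vn_j)^2 = \frac12 |\dij \widetilde{\vV}_h|^2$, as claimed. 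I do not anticipate any genuine obstacle here: the argument is entirely algebraic, and its only substantive ingredient is the observation that replacing vectors of Euclidean norm at least one by their normalizations can only decrease the square of their inner product.
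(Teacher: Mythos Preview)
Your argument is correct. The paper itself omits the proof of this lemma and instead cites \cite{Bartels_bookch2014}, so there is no in-paper proof to compare against; your direct algebraic verification via $(\va\otimes\va)\colon(\vb\otimes\vb)=(\va\cdot\vb)^2$ together with $(\vv_i\cdot\vv_j)^2=|\vv_i|^2|\vv_j|^2(\vn_i\cdot\vn_j)^2\ge(\vn_i\cdot\vn_j)^2$ is exactly the elementary route one would expect, and your remark that weak acuteness is only used downstream (to preserve the sign when multiplying by $k_{ij}$ and summing) is accurate.
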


We also need the following key property of \eqref{eqn:convex_concave} (cf. \cite[Lemma 4.1]{Nochetto_SJNA2017}, for example).

\begin{lemma}[convex-concave splitting] \label{lem:convex_concave}
Given $s_h^k, s_h^{k+1} \in \Sh$, we have
\[
\iO \BulkLdG(s_h^{k+1}) \, dx - \iO \BulkLdG(s_h^k) \, dx \le \delta_{s_h} \Ebulk^h [s_h^{k+1}; s_h^{k+1}-s_h^k].
\]
\end{lemma}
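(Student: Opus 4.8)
The plan is to derive the estimate from a purely pointwise inequality in $\R$, using nothing beyond convexity of $\psi_c$ and $\psi_e$ together with $\BulkLdG = \psi_c - \psi_e$. Fix $x \in \Om$ and abbreviate $a := s_h^{k+1}(x)$, $b := s_h^k(x)$. Since $\psi_c$ is convex and differentiable, its graph lies above every tangent line, so $\psi_c(b) \ge \psi_c(a) + \psi_c'(a)(b-a)$, that is,
\[
\psi_c(a) - \psi_c(b) \le \psi_c'(a)\,(a-b).
\]
The same tangent-line inequality applied to $\psi_e$ at the point $b$ gives $\psi_e(a) \ge \psi_e(b) + \psi_e'(b)(a-b)$, hence
\[
-\big(\psi_e(a) - \psi_e(b)\big) \le -\psi_e'(b)\,(a-b).
\]
Adding these two displays yields the key pointwise bound
\[
\BulkLdG(a) - \BulkLdG(b) \le \big(\psi_c'(a) - \psi_e'(b)\big)(a-b).
\]

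Next I would substitute back $a = s_h^{k+1}(x)$, $b = s_h^k(x)$, integrate over $\Om$, and divide by the positive constant $\Bulkcoef$. The left-hand side becomes $\Ebulk^h[s_h^{k+1}] - \Ebulk^h[s_h^k] = \frac{1}{\Bulkcoef}\big(\iO \BulkLdG(s_h^{k+1})\,dx - \iO \BulkLdG(s_h^k)\,dx\big)$, while the right-hand side is, by the very definition \eqref{eqn:convex_concave} of the split first variation with test function $z_h = s_h^{k+1} - s_h^k \in \Sh(0)$, exactly $\delta_{s_h}\Ebulk^h[s_h^{k+1};\, s_h^{k+1} - s_h^k]$. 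This is the asserted inequality. All integrals here are finite since $s_h^k, s_h^{k+1}$ are continuous piecewise linear, hence bounded on $\overline{\Om}$, and $\BulkLdG$, $\psi_c'$, $\psi_e'$ are polynomials.

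There is no genuine obstacle: this is the standard justification of convex splitting, and it uses neither the weak acuteness \eqref{eqn:weakly-acute} nor the structural constraint \eqref{eqn:LdG_struct_condition_discrete}. The only point I would spell out in a line is that a $C^1$ convex decomposition $\BulkLdG = \psi_c - \psi_e$ does exist for the quartic $\BulkLdG(s) = \Bulkfunc(\vQ(s))$ arising from \eqref{eqn:Landau-deGennes_bulk_potential}: one puts the quartic term and the convex lower-order contributions into $\psi_c$ and the remaining concave polynomial into $-\psi_e$, optionally adding and subtracting $\tfrac{\Bulkstab}{2}s^2$ with $\Bulkstab$ large enough that $\psi_c$ is strictly convex on the physical range $[-\tfrac{1}{d-1},1]$, as in \cite{Shen_SJSC2010,Wise_SJNA2009,Nochetto_SJNA2017}.
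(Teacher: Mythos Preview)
Your argument is correct and is exactly the standard convexity proof for convex--concave splittings; the paper does not give its own proof here but simply refers to \cite[Lemma~4.1]{Nochetto_SJNA2017}, where the identical pointwise tangent-line argument is carried out. One cosmetic remark: after integrating and dividing by $\Bulkcoef$ you in fact obtain $\Ebulk^h[s_h^{k+1}]-\Ebulk^h[s_h^k]\le\delta_{s_h}\Ebulk^h[s_h^{k+1};s_h^{k+1}-s_h^k]$, which is the form actually invoked in Theorem~\ref{thm:energy_decrease} (the lemma as displayed drops the $1/\Bulkcoef$ on the left, a harmless inconsistency).
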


Next, we prove that the discrete gradient flow algorithm is energy-decreasing.

\begin{theorem}[energy decrease] \label{thm:energy_decrease}
If the meshes are weakly acute and $\tau\le C_0 h^{d/2}$, with $C_0$ proportional to $\Eunit^h[s_h^0, \vNN_h^0]^{-1/2}$, then it holds that 
\[
\Eunit^h[s_h^K, \vNN_h^K] + \frac{1}{2\tau} \left( \sum_{k=0}^{K-1} \| \vt_h^k \|^2_{H^1_\omega(\Omega)}  + \| s_h^{k+1} - s_h^k \|_{L^2(\Om)}^2 \right) \le \Eunit^h[s_h^0, \vNN_h^0] \quad \forall K \ge 1,
\]
where $H^1_\omega(\Omega)$ is the weighted Sobolev space defined in \eqref{eqn:weighed-H1}. 
Therefore, the algorithm stops in a finite number of steps: given a tolerance $\varepsilon$, there exists $K=K_\eps \ge 1$ such that $\frac{1}{\tau}  (\| \vt_h^K \|^2_{H^1_\omega(\Omega)}  + \| s_h^{K} - s_h^{K-1} \|^2 )< \varepsilon .$
\end{theorem}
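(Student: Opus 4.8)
The plan is to establish a one-step energy decrease inequality, sum it over the iterations, and then observe that the resulting telescoping sum forces the increments to vanish. I would organize the argument into three parts, treating the tangential flow step, the projection step, and the $s_h$-flow step in turn, so that each contributes one non-negative dissipation term.

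First I would analyze the weighted tangent flow step \eqref{eqn:flow_theta}. Testing with $\vv_h = \vt_h^k$ and $\vV_h = \vT_h^k$, the first term becomes $\frac1\tau \|\vt_h^k\|_{H^1_\omega(\Om)}^2$ with $\omega = |s_h^k|^2$, and the first-variation term controls the change of $\Euniring^h$. Since $\Euniring^h[s_h^k,\cdot]$ is a quadratic (hence convex) functional of the line field argument, convexity gives $\Euniring^h[s_h^k,\vNN_h^k] - \Euniring^h[s_h^k,\vNN_h^k+\vT_h^k] \ge \delta_{\vNN_h}\Euniring^h[s_h^k,\vNN_h^k+\vT_h^k;\vT_h^k]$; combined with \eqref{eqn:flow_theta} this yields $\Euniring^h[s_h^k,\widehat\vNN_h^{k+1}] + \tfrac1\tau\|\vt_h^k\|_{H^1_\omega(\Om)}^2 \le \Euniring^h[s_h^k,\vNN_h^k]$ up to the extra quadratic term $\vt_h^k\otimes\vt_h^k$ in $\widehat\vNN_h^{k+1} = \vNN_h^k + \vT_h^k + \vt_h^k\otimes\vt_h^k$. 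Here is where the time-step restriction $\tau \le C_0 h^{d/2}$ enters: the extra term is quadratic in $\vt_h^k$, and using an inverse estimate $\|\vt_h^k\|_{L^\infty} \lesssim h^{-d/2}\|\vt_h^k\|_{L^2}$ together with the a priori bound $\|\vt_h^k\|_{H^1_\omega(\Om)}^2 \le \tau\,\Eunit^h[s_h^0,\vNN_h^0]$ (which comes from the partial dissipation already accumulated), the contribution of $\vt_h^k\otimes\vt_h^k$ to $\Euniring^h$ is absorbed into $\tfrac1{2\tau}\|\vt_h^k\|_{H^1_\omega(\Om)}^2$. Next, the projection step \eqref{eqn:vNN_update} only decreases $\Euniring^h$ further: this is exactly inequality \eqref{eqn:decreasing_energy}, which follows from Lemma \ref{lem:normalization} applied with $\vv_h = \vn_h^k + \vt_h^k$ (note $|\vn_h^k(x_i)+\vt_h^k(x_i)| \ge 1$ by orthogonality) and the fact that $k_{ij}\ge0$ for $i\ne j$ together with $s_h(x_i)^2 + s_h(x_j)^2 \ge 0$. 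Combining, $\Euniring^h[s_h^k,\vNN_h^{k+1}] + \tfrac1{2\tau}\|\vt_h^k\|_{H^1_\omega(\Om)}^2 \le \Euniring^h[s_h^k,\vNN_h^k]$.

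Then I would handle the $s_h$-flow step \eqref{eqn:flow_s}. Testing with $z_h = s_h^{k+1} - s_h^k$, the first term gives $\tfrac1\tau\|s_h^{k+1}-s_h^k\|_{L^2(\Om)}^2$. For the main energy part, $\Eunimain^h[\cdot,\vNN_h^{k+1}]$ is a quadratic, convex functional of $s_h$ (the coefficients $k_{ij}(s_h(x_i)^2+s_h(x_j)^2)/2$ make the interaction part convex in $s_h$ since $k_{ij}\ge0$, and the orientation part is manifestly convex), so convexity gives $\Eunimain^h[s_h^{k+1},\vNN_h^{k+1}] - \Eunimain^h[s_h^k,\vNN_h^{k+1}] \le \delta_{s_h}\Eunimain^h[s_h^{k+1},\vNN_h^{k+1};s_h^{k+1}-s_h^k]$. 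For the bulk part I would invoke Lemma \ref{lem:convex_concave} (convex-concave splitting), which gives precisely $\Ebulk^h[s_h^{k+1}] - \Ebulk^h[s_h^k] \le \delta_{s_h}\Ebulk^h[s_h^{k+1};s_h^{k+1}-s_h^k]$. Adding these and using \eqref{eqn:flow_s} produces $\Eunit^h[s_h^{k+1},\vNN_h^{k+1}] + \tfrac1\tau\|s_h^{k+1}-s_h^k\|_{L^2(\Om)}^2 \le \Eunimain^h[s_h^k,\vNN_h^{k+1}] + \Ebulk^h[s_h^k]$. Chaining with the previous display and noting that $\Eunimain^h[s_h^k,\vNN_h^{k+1}] = \tfrac{d-1}{4d}\sum k_{ij}(\delta_{ij}s_h^k)^2 + \Euniring^h[s_h^k,\vNN_h^{k+1}]$ while $\Eunit^h[s_h^k,\vNN_h^k]$ has the same first summand, we obtain the clean one-step estimate
\[
\Eunit^h[s_h^{k+1},\vNN_h^{k+1}] + \frac1{2\tau}\Big(\|\vt_h^k\|_{H^1_\omega(\Om)}^2 + \|s_h^{k+1}-s_h^k\|_{L^2(\Om)}^2\Big) \le \Eunit^h[s_h^k,\vNN_h^k].
\]
Summing over $k = 0,\dots,K-1$ telescopes to the asserted inequality. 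Finally, since the left side of the summed estimate is bounded by the fixed initial energy for all $K$, the series $\sum_{k\ge0}\big(\|\vt_h^k\|_{H^1_\omega(\Om)}^2 + \|s_h^{k+1}-s_h^k\|_{L^2(\Om)}^2\big)$ converges, so its general term tends to $0$; hence for any $\eps>0$ there is $K=K_\eps$ with $\tfrac1\tau(\|\vt_h^K\|_{H^1_\omega(\Om)}^2 + \|s_h^K-s_h^{K-1}\|^2) < \eps$.

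The main obstacle is the rigorous control of the quadratic correction term $\vt_h^k\otimes\vt_h^k$ in the projection/tangential step: this is the only place where convexity alone is insufficient and where the CFL-type condition $\tau \le C_0 h^{d/2}$ with $C_0 \sim \Eunit^h[s_h^0,\vNN_h^0]^{-1/2}$ is genuinely needed. One must set up a bootstrap — assume the energy bound holds up to step $k$, use it (together with the inverse inequality on the quasi-uniform mesh) to bound $\|\vt_h^k\|_{L^\infty}$ and absorb the cubic-in-$\vt_h^k$ terms appearing in $\Euniring^h[s_h^k,\vNN_h^k+\vT_h^k+\vt_h^k\otimes\vt_h^k]$ into the dissipation, and thereby propagate the bound to step $k+1$. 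All the other ingredients are either direct first-variation computations or immediate applications of Lemmas \ref{lem:normalization} and \ref{lem:convex_concave}.
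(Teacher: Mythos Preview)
Your proposal is correct and follows essentially the same approach as the paper: test \eqref{eqn:flow_theta} with $\vT_h^k$ and use the quadratic structure of $\Euniring^h[s_h^k,\cdot]$ (the paper uses the polarization identity, you use convexity---equivalent here), apply Lemma~\ref{lem:normalization} for the projection, control the extra $\vt_h^k\otimes\vt_h^k$ term via the inverse estimate and the induction/bootstrap on the energy bound under the CFL condition, and handle the $s_h$-step via the quadratic identity and Lemma~\ref{lem:convex_concave}. One small slip: your convexity inequality as written has the wrong sign on the right-hand side (it should be $-\delta_{\vNN_h}\Euniring^h[\ldots;\vT_h^k]$), but the conclusion you draw from it is correct.
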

\begin{proof}
We proceed as in \cite[Lemma 6]{Bartels_bookch2014} except for the presence of the variable order parameter $s_h^k$ and the weighted $H^1_\omega(\Omega)$ metric. We make the induction assumption that
\[
\Eunit^h [s_h^k, \vNN_h^k] \le \Lambda := \Eunit^h [s_h^0, \vNN_h^0].
\]
for $k\ge0$ and show the estimate
\[
\frac{1}{2\tau} \left( \| \vt_h^k \|^2_{H^1_\omega(\Omega)}  + \| s_h^{k+1} - s_h^k \|_{L^2(\Om)}^2 \right) + \Eunit^h[s_h^{k+1}, \vNN_h^{k+1}] \le \Eunit^h[s_h^{k}, \vNN_h^k].
\]
Upon summation on $k$ this implies the asserted estimate.
We split the proof into several steps.

\medskip\noindent
{\em Step 1: Explicit expression} for the solution to \eqref{eqn:flow_theta}.
In order to simplify the notation, we write
\begin{equation} \label{eqn:def_sigma_ij} \begin{aligned}
& \sigma_{ij} := k_{ij} \, \frac{s_h^{k}(x_i)^2 + s_{h}^{k}(x_j)^2}{2} \ge 0, \mbox{ if } i \neq j,\\
& \vtNN_h^{k+1} := \vNN_h^k + \vT_h^k.
\end{aligned}\end{equation}
We set $\vv_h = \vt_h^k$ in \eqref{eqn:flow_theta}, and thus $\vV_h = \vT_h^k$, to obtain
\[
\frac{1}{\tau} \| \vt_h^k \|^2_{H^1_\omega(\Omega)} + \frac12 \sum_{i,j} \sigma_{ij} (\dij\vtNN_h^{k+1}) \colon (\dij\vT_h^k) = 0.
\] 
The elementary equality
$2(\delta_{ij} \vtNN_h^{k+1}) : (\delta_{ij} \vT_h^k) = | \delta_{ij} \vtNN_h^{k+1} |^2
- | \delta_{ij} \vtNN_h^{k} |^2 + |\delta_{ij} \vT_h^k|^2$ and \eqref{eqn:LdG_def_Euniringh}
yield
\[ \begin{aligned}
\frac12  \sum_{i,j} \sigma_{ij} (\dij\vtNN_h^{k+1}) \colon (\dij\vT_h^k)
& = \Euniring^h[s_h^k, \vtNN_h^{k+1}] - \Euniring^h[s_h^k, \vNN_h^{k}] + \Euniring^h[s_h^k, \vT_h^{k}],
\end{aligned} \]
and therefore we deduce
\begin{equation} \label{eqn:bound_theta}
\frac{1}{\tau} \| \vt_h^k \|^2_{H^1_\omega(\Omega)} + \Euniring^h[s_h^k, \vtNN_h^{k+1}] + \Euniring^h[s_h^k, \vT_h^{k}] =  \Euniring^h[s_h^k, \vNN_h^{k}] .
\end{equation}

\medskip\noindent
{\em Step 2: Monotonicity of projection.}
We define the updated line field to be
\[
\vttNN_h^{k+1} = (\vn_h^k + \vt_h^k) \otimes (\vn_h^k + \vt_h^k),
\]
and recall that $\vNN^{k+1}$ defined in \eqref{eqn:vNN_update} is its nodewise normalization. From Lemma \ref{lem:normalization}, we have the monotonocity relation \eqref{eqn:decreasing_energy}:
\begin{equation*} 
\Euniring^h[s_h^k, \vNN_h^{k+1}] \le \Euniring^h[s_h^k, \vttNN_h^{k+1}].
\end{equation*}

\medskip\noindent
{\em Step 3: Bound of the energy $\Euniring^h[s_h^k, \vttNN_h^{k+1}]$.}
Expanding the expression for $\vttNN_h^{k+1}$, we have 
\[
\vttNN_h^{k+1} = \vtNN_h^{k+1} + \vt_h^k \otimes \vt_h^k .
\]
Therefore, by Cauchy-Schwarz,
\[ \begin{aligned}
\Euniring^h[s_h^k, \vttNN_h^{k+1}] & = \Euniring^h[s_h^k, \vtNN_h^{k+1}] + \Euniring^h[s_h^k, \vt_h^k \otimes \vt_h^k] + \frac12 \sum_{i,j} \sigma_{ij} (\dij \vtNN_h^{k+1}) \colon \dij (\vt_h^k \otimes \vt_h^k) \\
& \le  \Euniring^h[s_h^k, \vtNN_h^{k+1}] + \Euniring^h[s_h^k, \vt_h^k \otimes \vt_h^k] + 2 \Euniring^h[s_h^k, \vtNN_h^{k+1}]^{1/2}  \Euniring^h[s_h^k, \vt_h^k \otimes \vt_h^k]^{1/2},
\end{aligned} \]
whence
\begin{equation} \label{eqn:bound_thetatilde}
\begin{aligned}  
  \Euniring^h[s_h^k, \vttNN_h^{k+1}] &\le \Euniring^h[s_h^k, \vtNN_h^{k+1}]
  \\ &+  \Euniring^h[s_h^k, \vt_h^k \otimes \vt_h^k]^{1/2} \left( \Euniring^h[s_h^k, \vt_h^k \otimes \vt_h^k]^{1/2} + 2\Euniring^h[s_h^k, \vtNN_h^{k+1}]^{1/2} \right).
\end{aligned}
\end{equation}
Invoking the induction hypothesis, we readily see that $\Euniring^h [s_h^k, \vNN_h^k] \le \Lambda$, and using \eqref{eqn:bound_theta} gives
\begin{equation}\label{eqn:induction}
\frac{1}{\tau}\| \vt_h^k \|^2_{H^1_\omega(\Omega)} +
  \Euniring^h [s_h^k, \vtNN_h^{k+1}]  \le \Lambda.
\end{equation}
To bound $\Euniring^h[s_h^k, \vt_h^k \otimes \vt_h^k]$, we write
\[ \begin{aligned}
\dij(\vt_h^k \otimes \vt_h^k) 
& = \dij \vt_h^k \otimes \vt_h^k(x_j) + \vt_h^k(x_i) \otimes  \dij \vt_h^k,
\end{aligned} \]
and thereby obtain
\[
\dij (\vt_h^k \otimes \vt_h^k) \colon \dij (\vt_h^k \otimes \vt_h^k) \le C | \dij \vt_h^k |^2 \max \big\{ |\vt_h^k(x_i)|, |\vt_h^k(x_j)| \big\}^2 .
\]
Using \eqref{eqn:def_sigma_ij}, we deduce
\begin{align*} \label{eqn:first-bound-nonlinear-tk}
\Euniring^h[s_h^k, \vt_h^k \otimes \vt_h^k]  &\le C \sum_{i,j} \sigma_{ij}  | \dij \vt_h^k |^2  \max \big\{ |\vt_h^k(x_i)|, |\vt_h^k(x_j)| \big\}^2
\\
& \le C \sum_{T \in \mathcal{T}_h} | \vt_h^k |^2_{H^1_\omega(T)} \left\| \vt_h^k \right\|_{L^\infty(T)}^2 \le C | \vt_h^k |^2_{H^1_\omega(\Omega)} \| \vt_h^k \|_{L^\infty(\Omega)}^2.
\end{align*}
Since the mesh $\Tk_h$ is shape regular and quasi-uniform, we resort to the inverse inequality $\| \vt_h^k \|_{L^\infty(\Omega)} \le C h^{-d/2} \| \vt_h^k \|_{L^2(\Omega)}$ and rewrite the above expression as follows:
\[
\Euniring^h[s_h^k, \vt_h^k \otimes \vt_h^k] \le C h^{-d} | \vt_h^k |^2_{H^1_\omega(\Omega)} \| \vt_h^k \|_{L^2(\Omega)}^2 \le C h^{-d} \| \vt_h^k \|^4_{H^1_\omega(\Omega)}
\]
Consequently, \eqref{eqn:induction} yields the bound
\[
\Euniring^h[s_h^k, \vt_h^k \otimes \vt_h^k]^{1/2} + 2\Euniring^h[s_h^k, \vtNN_h^{k+1}]^{1/2}
\le C h^{-d/2} \tau \Lambda + 2 \Lambda^{1/2} \le 4 \Lambda^{1/2},
\]
provided $\tau \le C \Lambda^{-1/2} h^{d/2}$. Inserting this expression into \eqref{eqn:bound_thetatilde} results in
\[
\Euniring^h[s_h^k, \vttNN_h^{k+1}] \le \Euniring^h[s_h^k, \vtNN_h^{k+1}]
+ C  h^{-d/2} \Lambda^{1/2} \|\vt_h^k\|_{H^1_\omega(\Omega)}^2.
\]
 
\medskip\noindent
{\em Step 4: Bound of the energy $\Euni^h[s_h^k,\vNN_h^{k+1}]$.}
Combining this estimate with \eqref{eqn:bound_theta} and \eqref{eqn:decreasing_energy}, we find
\begin{align*}
\Euniring^h[s_h^k,\vNN_h^k] & \ge \frac{1}{\tau} \| \vt_h^k \|^2_{H^1_\omega(\Omega)} +
\Euniring^h[s_h^k, \vtNN_h^{k+1}]
\\
& \ge \frac{1}{\tau} \Big(1 - C \Lambda^{1/2} h^{-d/2} \tau  \Big) \| \vt_h^k \|^2_{H^1_\omega(\Omega)} + \Euniring^h[s_h^k, \vttNN_h^{k+1}]
\\
& \ge \frac{1}{2\tau} \| \vt_h^k \|^2_{H^1_\omega(\Omega)} + \Euniring^h[s_h^k, \vNN_h^{k+1}],
\end{align*}
provided $\tau \le C \Lambda^{-1/2} h^{d/2}$ with a geometric constant $C$ perhaps smaller than before. Since the scalar variable $s_h^k$ remains fixed in the gradient flow for $\vNN_h^{k+1}$, adding $\Eunis^h[s_h^k]$ to both sides of the above inequality gives
\begin{equation} \label{eqn:bound_energy_k}
\frac{1}{2\tau}  \| \vt_h^k \|^2_{H^1_\omega(\Omega)} + \Eunimain^h[s_h^k, \vNN_h^{k+1}] \le \Eunimain^h[s_h^k,\vNN_h^k].
\end{equation}

\medskip\noindent
{\em Step 5: Gradient flow for $s_h$.}
Taking $z_h=s_h^{k+1}-s_h^k\in\Sh(0)$ in step 3 of the algorithm, and using the
  elementary identity
\[
2 s_h^{k+1} \big(s_h^{k+1} - s_h^k  \big) = \big| s_h^{k+1} \big|^2 - \big| s_h^k \big|^2
+ \big| s_h^{k+1} - s_h^k  \big|^2
\]
we readily obtain
\[
\Eunimain^h[s_h^{k+1},\vNN_h^{k+1}] - \Eunimain^h[s_h^k,\vNN_h^{k+1}]
\le \delta_{s_h} \Eunimain^h[s_h^{k+1},\vNN_h^{k+1};s_h^{k+1} - s_h^k.]
\]
In addition, applying Lemma \ref{lem:convex_concave} leads to
\[
\Ebulk^h[s_h^{k+1}] - \Ebulk^h[s_h^k] \le \delta_{s_h} \Ebulk^h[s_h^{k+1};s_h^{k+1} - s_h^k],
\]
and together with the previous inequality implies
\[
\Eunit^h[s_h^{k+1},\vNN_h^{k+1}] - \Eunit^h[s_h^k,\vNN_h^{k+1}] \le
\delta_s \Eunit^h[s_h^{k+1},\vNN_h^{k+1};s_h^{k+1}-s_h^k] = -\frac{1}{\tau}
\| s_h^{k+1} - s_h^k \|_{L^2(\Omega)}^2.
\]
Adding this expression to \eqref{eqn:bound_energy_k} yields the desired estimate and
completes the proof.
\end{proof}

\vspace{-0.1in}

\begin{remark}[CFL condition]
The stability constraint $\tau\le C \Eunit[s_h^0,\vNN_h^0]^{-1/2} h^{d/2}$ is due
to the weighted $H^1_\omega(\Omega)$ norm and the use of an inverse estimate between
$L^\infty(\Omega)$ and $L^2(\Omega)$. If the weight $\omega = (s_h^k)^2$ is bounded
away from zero, then the CFL condition is milder, namely
$\tau\le C \Eunit[s_h^0,\vNN_h^0]^{-1/2} h^{d/2-1}|\log h|$ \cite{Bartels_bookch2014}.
The weight $\omega$ is critical because it accelerates the algoritm upon allowing large
variations of $\Theta_h^k$ near defects where it becomes small. 
\end{remark}

\vspace{-0.1in}
%---------------------------------------------------------------------------
\section{Numerical Experiments} \label{sec:LdG_experiments}
%---------------------------------------------------------------------------

To illustrate our method, we present computational experiments carried out with the MATLAB/C++ toolbox FELICITY \cite{Walker_SJSC2018}. We first consider a problem for the Landau - de Gennes energy with orientable Dirichlet boundary conditions. In such a case, the resulting line field of degree $+1$ is orientable, and the energy minimization problem is equivalent to the one given by minimizing the Ericksen energy; this allows us to compare with \cite{Nochetto_SJNA2017}. Afterwards, we illustrate the method's ability to capture non-orientable defects of degree $+1/2$ in two and three dimensional experiments, the latter leading to a non-straight line defect. We conclude with a Saturn-ring defect of degree $-1/2$ around a colloidal spherical inclusion.

%---------------------------------------------------------------------------
\subsection{Ericksen vs. Landau de Gennes} \label{sec:orientable_experiment}
%---------------------------------------------------------------------------

It is known that, if the line field is orientable, then a director field representation is equivalent. Thus, we compare the solutions for the Ericksen and the Landau - de Gennes model with orientable boundary conditions. 
In this first experiment we are {\em not taking into account the double-well potential.} If $\vNN = \vm \otimes \vm$ is an orientable line field, then a straightforward calculation gives $|\nabla \vNN|^2 = 2 | \nabla \vm|^2$, and therefore 
\[
\Eunimain [s, \vNN] = \frac{d-1}{2d} \iO |\nabla s|^2 dx + \iO s^2 |\nabla \vm|^2 dx  = 2 \Eerkmain [s, \vm],
\]
where the Ericksen energy corresponds to $\kappa = \frac{d-1}{2d}$.

We consider $\Omega = (0,1)^2$, and impose the Dirichlet boundary conditions on $\dOm$: 
\[
s = \frac12, \quad \vn = \frac{(x,y) - (1/2, 1/2)}{|(x,y) - (1/2, 1/2)|}, \quad \vNN = \vn \otimes \vn,
\]
and compare the minimizers of the discrete energies $\Eerkmain^h$ (with $\kappa = \frac14$) and $\Eunimain^h$. We initialize both gradient flows with $s = 1/2$ and a point defect away from the center. Figure \ref{fig:orientable} shows the equilibrium configurations for both models. For the solutions displayed, we computed $\Eunimain^h[s_{h,LdG},\vNN] = \Eerkmain^h[s_{h,Erk},\vn] \approx  1.234$, although $\min(s_{h,LdG}) \approx 2.3 \times 10^{-4}$ while $\min(s_{h,Erk}) \approx 5.8 \times 10^{-5}$.

\begin{center}
 \begin{figure}[ht]
\includegraphics[width=0.45\linewidth]{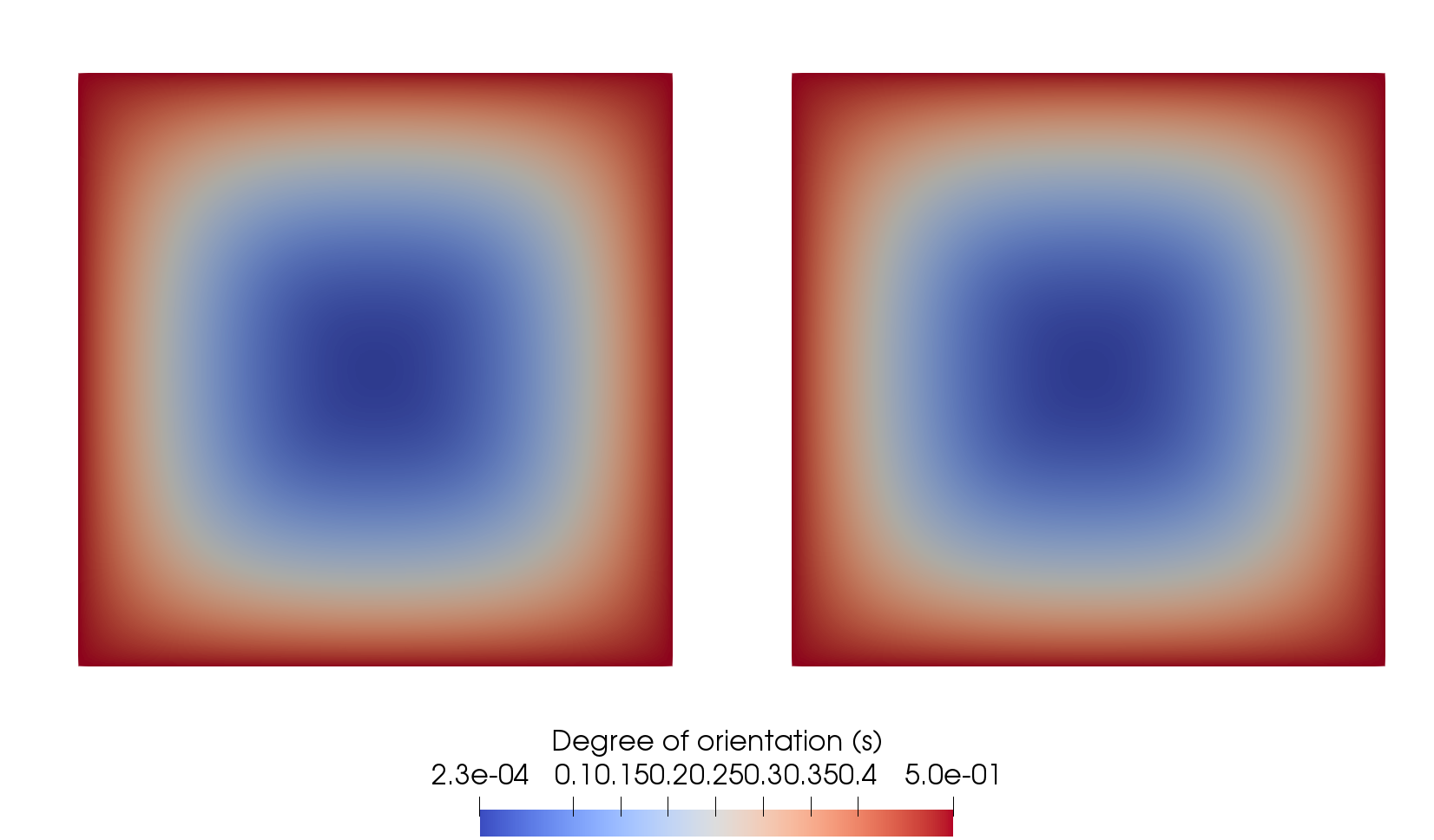} \hspace{0.2cm}
\includegraphics[width=0.45\linewidth]{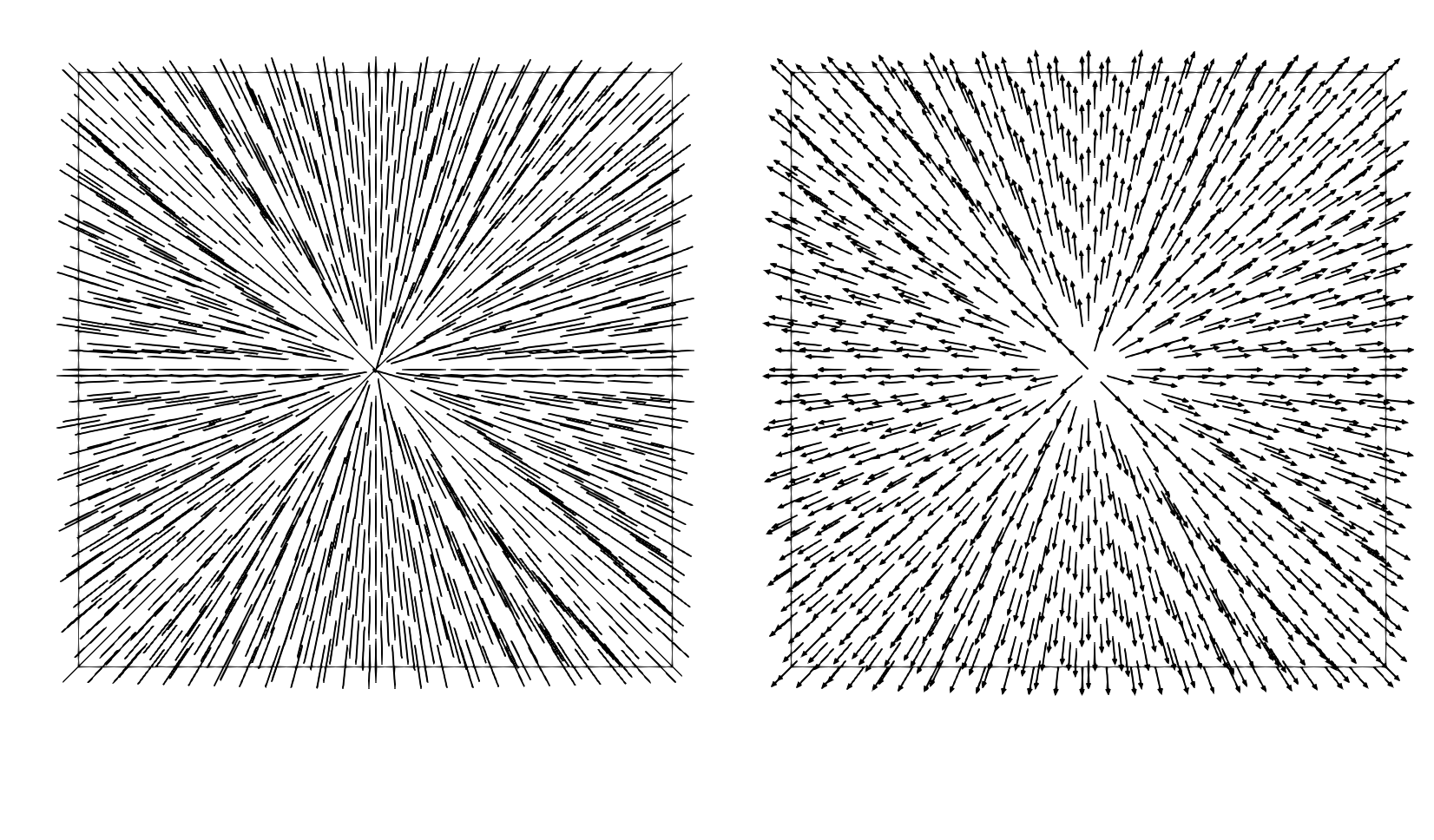}\\
\caption{Minimizing configurations for the Landau-de Gennes and Ericksen energies in 2-D for the setting discussed in Section \ref{sec:orientable_experiment}. Left: degree of orientation for both models (left is uniaxial Landau-de Gennes, right is Ericksen).  Right: line field $\vNN$ (left) and director field $\vn$ (right) are displayed.  In this case, the line field is orientable, so both the Ericksen model and uniaxially constrained model give the same result.
}
\label{fig:orientable}
\end{figure}
\end{center}

%---------------------------------------------------------------------------
\subsection{Non-orientable field in two dimensions} \label{sec:half_plane}
%---------------------------------------------------------------------------

Next, we simulate a non-orientable defect in the unit square $\Om = (0,1)^2$. We set the double-well potential with a convex splitting
\begin{equation*}
\begin{split}
\BulkLdG (s) & = \psi_c(s) - \psi_e(s) \\ 
&  := (26.20577 s^2 + 1) - (-4.1649313 s^4 + 30.2874 s^2),
\end{split}
\end{equation*}
with $\Bulkcoef = 1/16$, and note that $\BulkLdG$ has a local maximum at $s = 0$ and a global minimum at $s = s^* := 0.7$ with $\BulkLdG(s^*) = 0$ (by symmetry in two dimensions, $\BulkLdG(-s^*) = 0$). 
We impose Dirichlet boundary conditions for both $s$ and $\vNN$ on $\Gamma_s = \Gamma_\vNN = \dOm$,
\begin{equation}\label{eqn:plus_1/2_degree_defect_centered}
s = s^*, \quad \vn(x,y) = (\cos \theta, \sin \theta), \quad \vNN = \vn \otimes \vn, 
\quad \theta(x,y) = \frac12 \mbox{atan2} \left(\frac{y-1/2}{x-1/2}\right),
\end{equation}
where $\mbox{atan2}$ is the four-quadrant inverse tangent function, i.e. the boundary conditions for $\vNN$ correspond to a $+1/2$ degree defect centered at $(0.5,0.5)$. 
We initialize the gradient flow with $s = s^*$ and $\vNN$ corresponding to a $+1/2$ degree defect located at $(0.7167,0.2912)$, which has initial energy $\Euni^h[s_h,\vNN_h] = 18.5468$. We show the final equilibrium configurations of $s$ and the tensor field $\vNN$ in Figure \ref{fig:half_plane}. The method clearly captures the non-orientable defect at the domain center.  The final state has $\Euni^h[s_h,\vNN_h] = 2.1192$ and $\min(s_{h}) \approx 4.734 \times 10^{-3}$. 

\begin{center}
\begin{figure}[ht]
\includegraphics[width=0.4\linewidth]{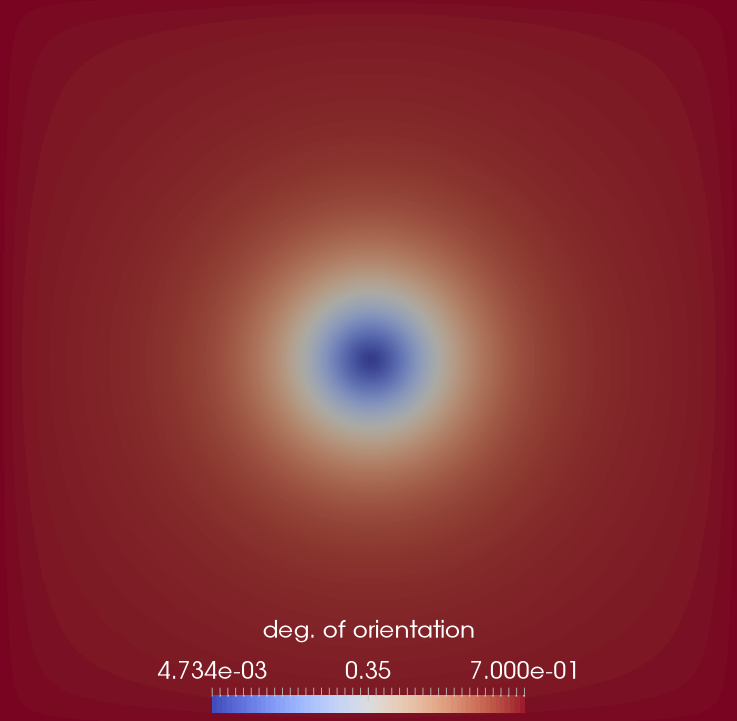} \hspace{0.3cm}
\includegraphics[width=0.4\linewidth]{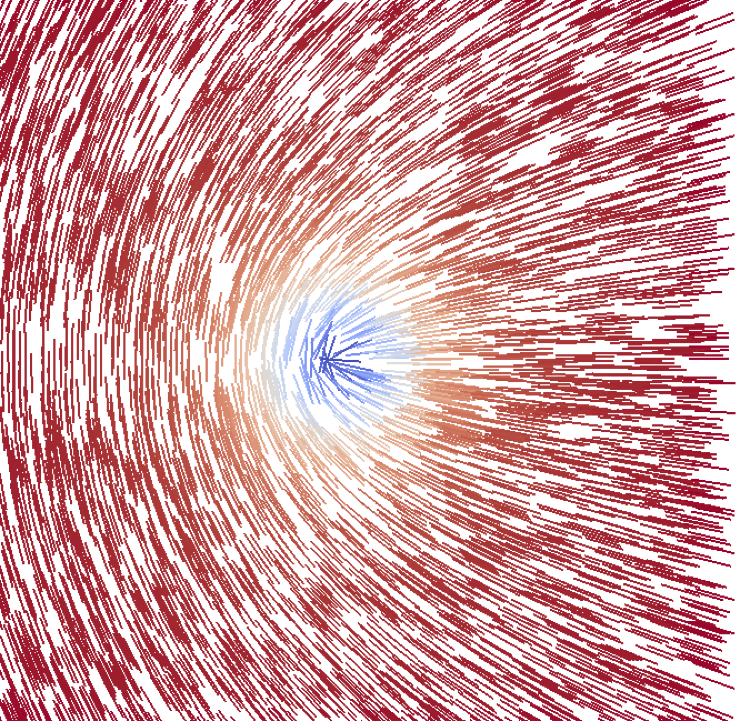}\\
\caption{A $+1/2$ degree point defect in 2-D (Section \ref{sec:half_plane}). Left: the degree-of-orientation $s$ is plotted with the singular region at the center. Right: the line field $\vNN$ is plotted and colored based on $s$.  The time step for the gradient flow was $\tau = 10^{-2}$. This configuration cannot be captured by the Ericksen (director) model.}
\label{fig:half_plane}
\end{figure}
\end{center}

%---------------------------------------------------------------------------
\subsection{Line defect in three dimensions}\label{sec:line_defect}
%---------------------------------------------------------------------------

We simulate a non-orientable line defect in the unit cube $(0,1)^3$.  The double-well potential with a convex splitting is given by
\begin{equation*}
\begin{split}
\BulkLdG (s) & = \psi_c(s) - \psi_e(s) \\ 
&  := (36.7709 s^2 + 1) - (-7.39101 s^4 + 4.51673 s^3 + 39.27161 s^2),
\end{split}
\end{equation*}
with $\Bulkcoef = 1/16$, and note that $\BulkLdG$ has a local maximum at $s = 0$ and a global minimum at $s = s^* := 0.700005531$ with $\BulkLdG(s^*) = 0$.

The boundary conditions for $\vNN$ were constructed in the following way.  Let $\theta_{0}(x,y)$ define a $+1/2$ degree defect in the plane, located at $(0.3,0.3)$ similar to \eqref{eqn:plus_1/2_degree_defect_centered}.  Likewise, let $\theta_{1}(x,y)$ define a $+1/2$ degree defect in the plane, located at $(0.7,0.7)$.  Next, define the Dirichlet boundary $\Gamma_s = \Gamma_\vNN = \dOm \setminus \Gamma_{o}$, where $\Gamma_{o} := \overline{\Om} \cap (\{ z=0 \} \cup \{ z=1 \})$.  Then, the Dirichlet conditions are
\begin{equation*}
s = s^*, \quad \vn(x,y) = (\cos \theta, \sin \theta,0), \quad \vNN = \vn \otimes \vn, 
\quad \theta(x,y,z) = (1 - z) \theta_{0} (x,y) + z \theta_{1} (x,y) + \pi z,
\end{equation*}
with vanishing Neumann condition on $\Gamma_{o}$.  Basically, the boundary conditions consist of rotating a planar $+1/2$ degree point defect as a function of $z$.  The solution is computed with the gradient flow approach \eqref{eqn:flow_theta} and time step $\tau = 10^{-3}$, and initialized with
\[
s = s^*, \quad \vn = (\cos \alpha, \sin \alpha, 0), \quad \vNN = \vn \otimes \vn, \quad \alpha(x,y,z) = \theta_{2} (x,y) + \pi z,
\]
where $\theta_{2} (x,y)$ corresponds to a $+1/2$ degree defect centered at $(0.5,0.5)$; this configuration has an initial energy of $\Euni^h[s_h,\vNN_h] = 10.013214$.

Figure \ref{fig:half_line_defect_view_A} shows three dimensional views of the minimizing configuration, where as Figure \ref{fig:half_line_defect_view_B} shows four horizontal slices of the solution.  A non-orientable line defect is observed, with final energy $\Euni^h[s_h,\vNN_h] = 5.2042593769$ and $\min(s_{h}) \approx 2.145 \times 10^{-2}$.
\begin{center}
\begin{figure}[ht]
\includegraphics[width=0.35\linewidth]{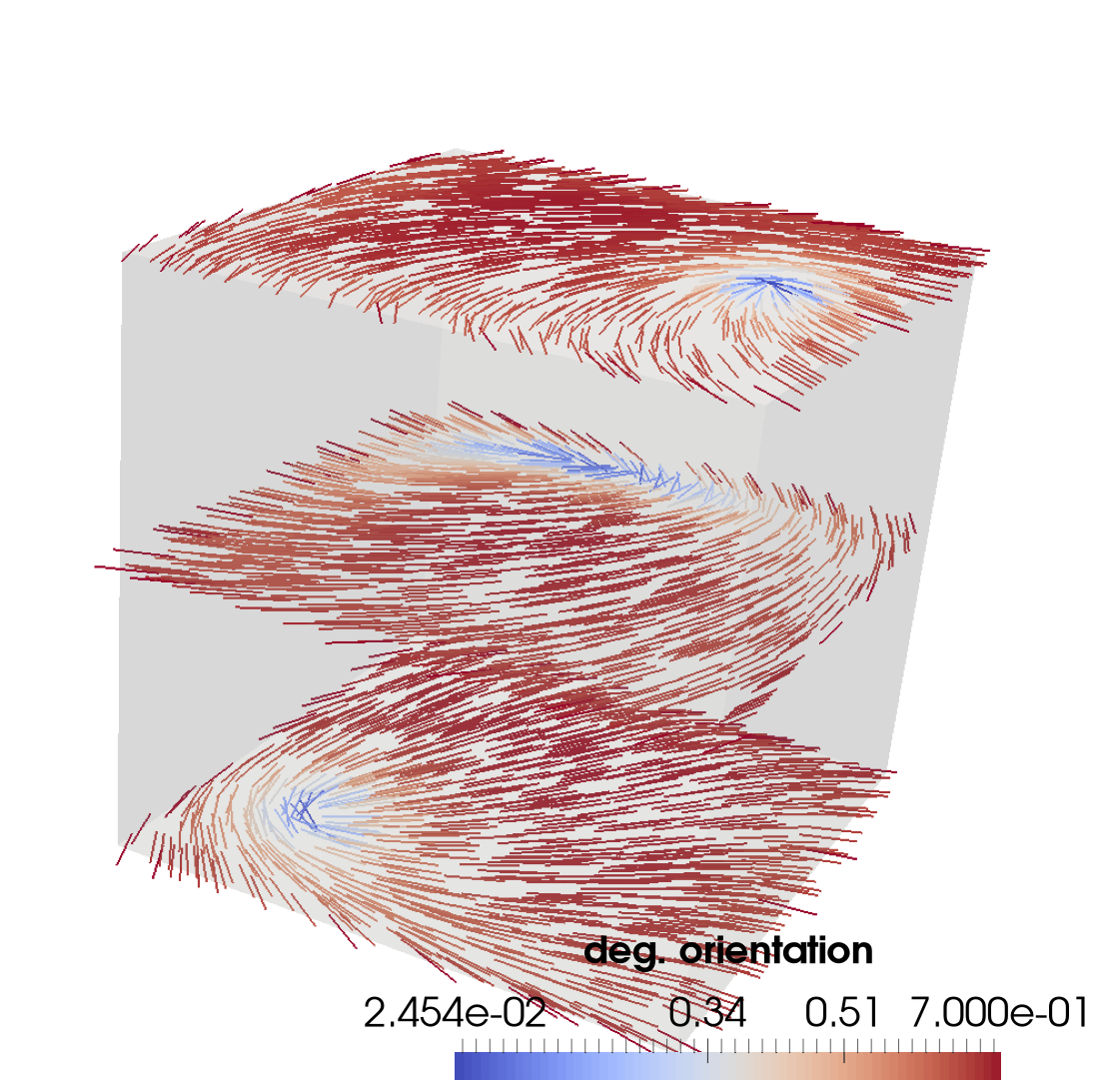} \hspace{0.2in}
\includegraphics[width=0.35\linewidth]{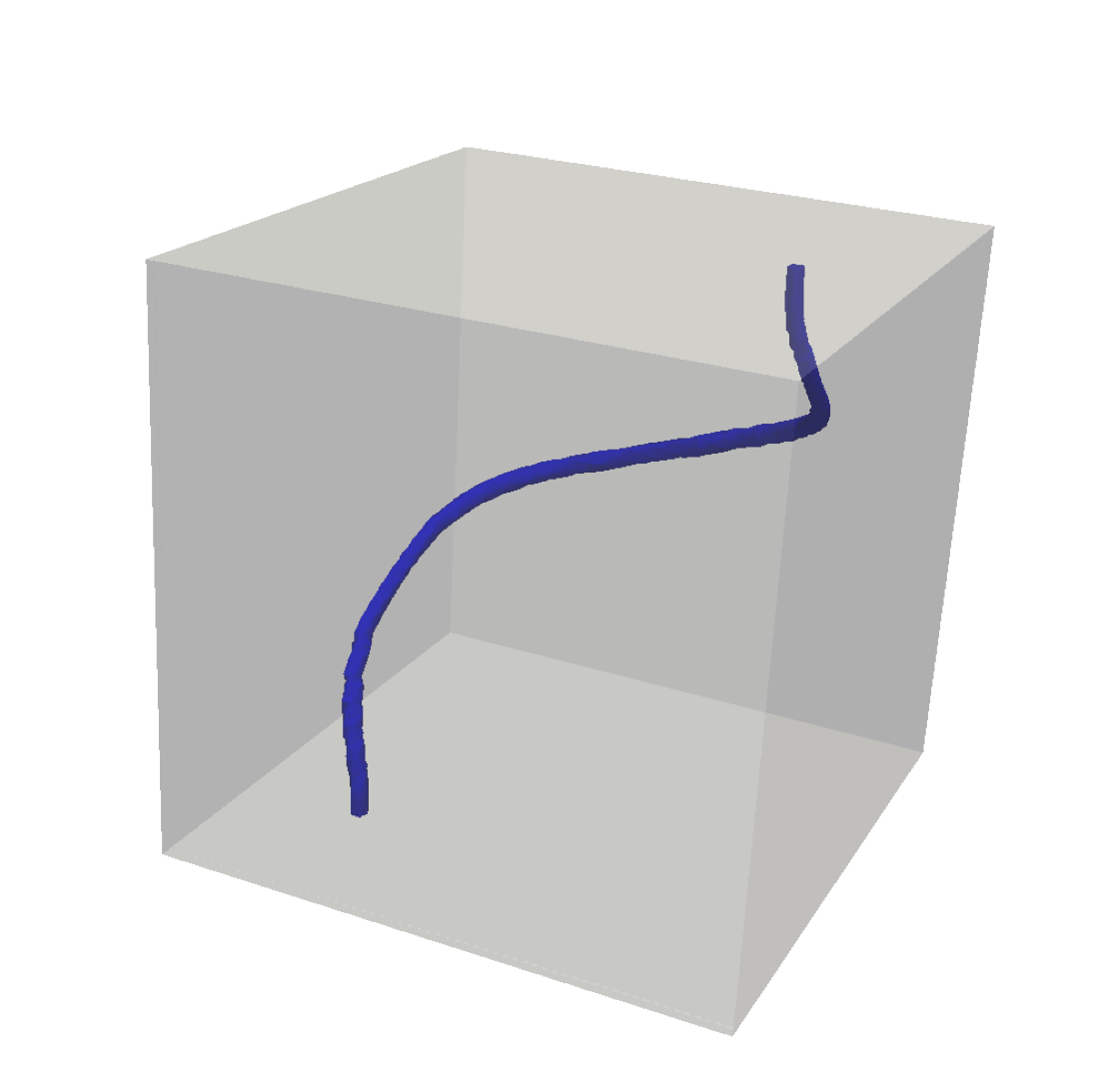}
\caption{A $+1/2$ degree line defect in a 3-D cube domain (Section \ref{sec:line_defect}). Left: line field $\vNN$ is shown at levels $z=0.0$, $0.5$, $1.0$ (colored by $s$).  Right: The $s=0.05$ iso-surface is shown that contains the line defect.  In each horizontal plane, the line field exhibits a $+1/2$ degree \emph{point} defect in 2-D.  The twisting of the line defect is due to the choice of boundary conditions.
}
\label{fig:half_line_defect_view_A}
\end{figure}
\end{center}

\begin{center}
\begin{figure}[ht]
\includegraphics[width=0.4\linewidth]{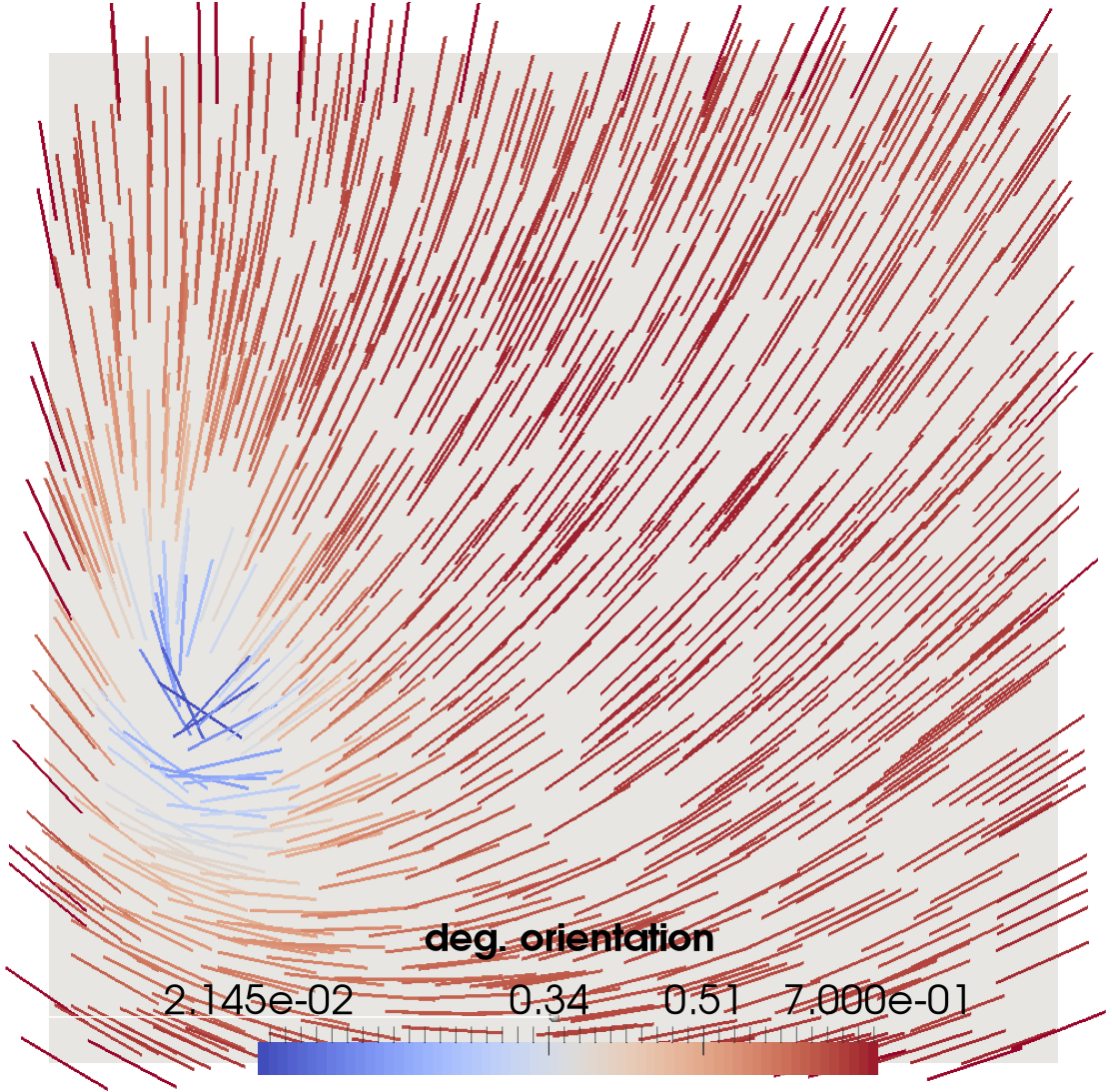} 
\includegraphics[width=0.395\linewidth]{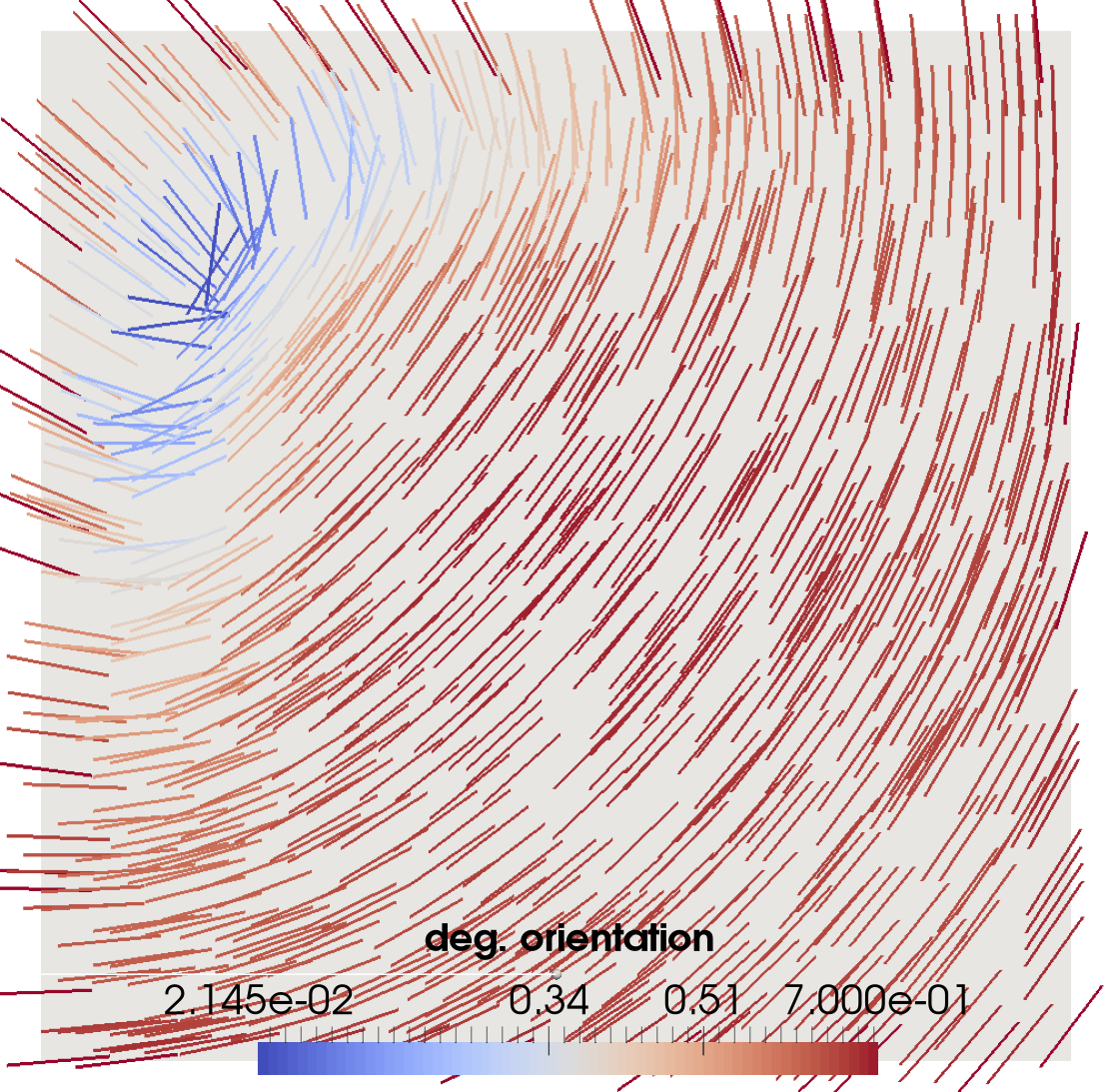} \vspace{0.08in}

\includegraphics[width=0.38\linewidth]{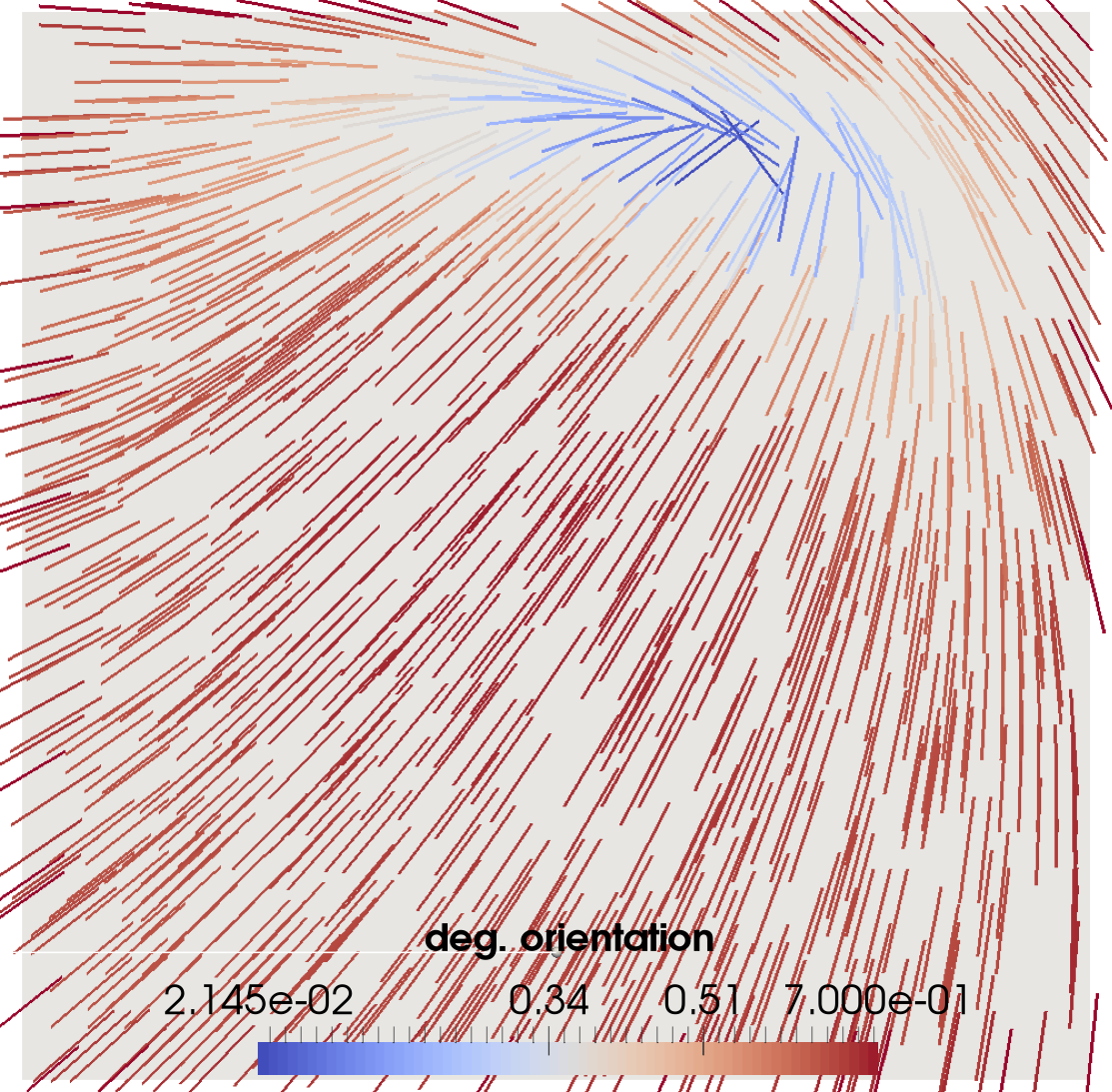} \hspace{0.05in}
\includegraphics[width=0.38\linewidth]{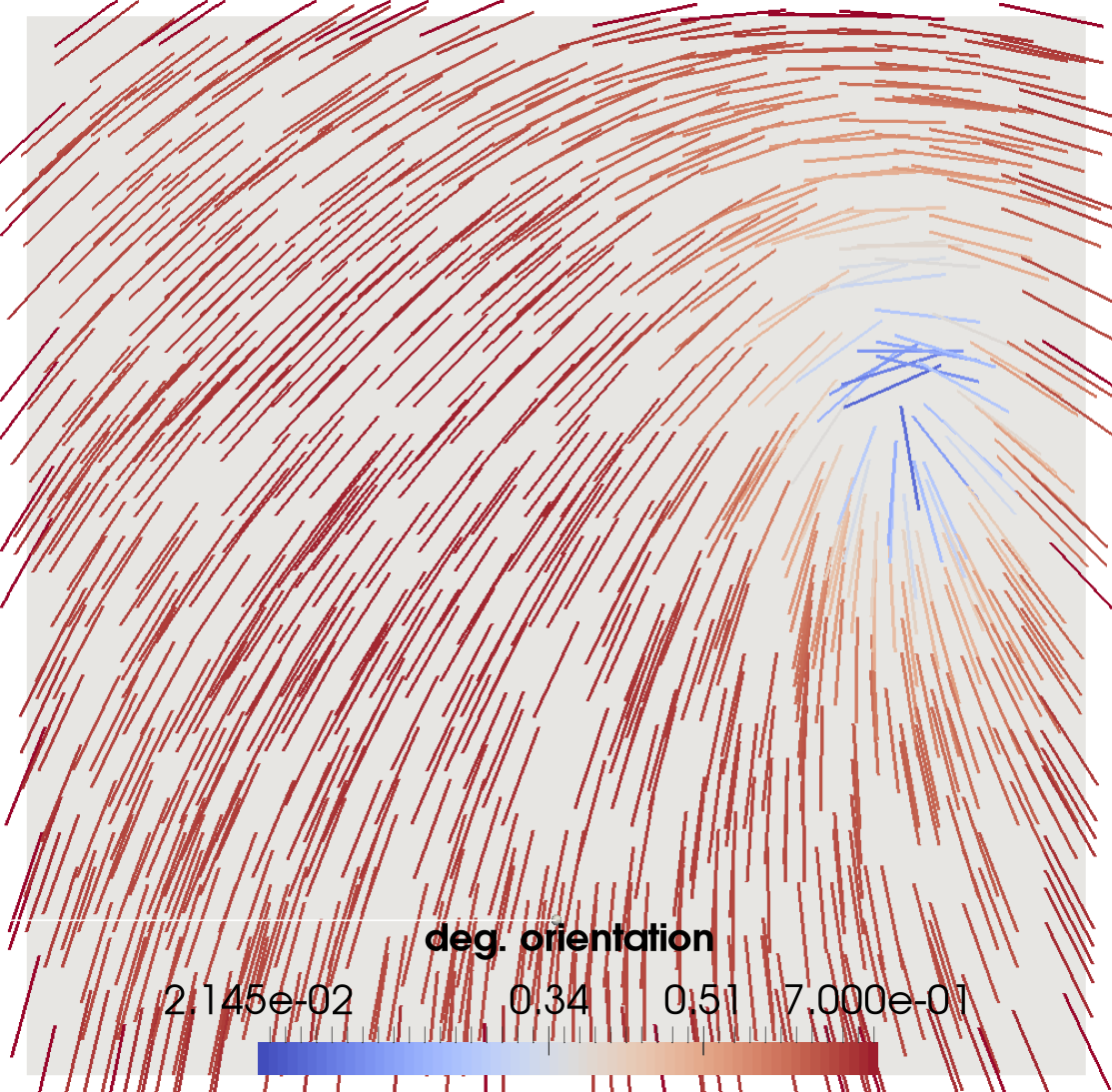} 
\caption{Horizontal slices of the $+1/2$ degree line defect in a 3-D cube domain shown in Figure \ref{fig:half_line_defect_view_A} (Section \ref{sec:line_defect}). Top: left is $z=0.2$, right is $z=0.4$.  Bottom: left is $z=0.6$, right is $z=0.8$.  The location of the point defect in each plane rotates with the boundary conditions.
}
\label{fig:half_line_defect_view_B}
\end{figure}
\end{center}

%---------------------------------------------------------------------------
\subsection{Saturn-ring Defect}\label{sec:saturn_ring_defect}
%---------------------------------------------------------------------------

Next, we simulate the Saturn-ring defect \cite{Alama_PRE2016,Gu_PRL2000} using the double well potential from Section \ref{sec:line_defect} with $\Bulkcoef = 0.09$. 
The domain $\Om$ is a ``prism'' type of cylindrical domain with square cross-section $[-0.25 \sqrt{2}, 0.75 \sqrt{2}]^2$, is centered about the $z=0$ plane, and has height $6$. The domain contains a spherical inclusion, with boundary $\Gm_{i}$, centered at $(\sqrt{2}/4,\sqrt{2}/4,0)$ with radius $0.283/\sqrt{2}$.  See \cite[Sec. 5.1.1]{Nochetto_JCP2018} for a precise definition. 

We use the following Dirichlet boundary conditions on $\Gamma_s = \Gamma_\vNN = \dOm$,
\begin{equation*}
\vn = \vnu, ~ \text{on } \Gm_{i}, \quad
\vn = (0,0,1)\tp, ~ \text{on } \Gm_{o}, \quad \vNN = \vn \otimes \vn, ~ \text{on } \dOm, \quad
s = s^*, ~ \text{on } \dOm,
\end{equation*}
where $\Gm_{o}$ is the outer boundary of $\Om$, $\vnu$ is the outer normal vector of the spherical inclusion,
and $s^*$ is the global minimum of the double well potential $\psi$. The initial conditions in $\Om$ for the gradient flow are: $s = s^*$ and $\vn = (0,0,1)\tp$, which have initial energy $\Euni^h[s_h,\vNN_h] = 7.59906$.

We show the final equilibrium configurations of $s$ and the tensor field $\vNN$ in Figure \ref{fig:saturn_ring_defect}.  A cross-section of the solution is shown that illustrates the $-1/2$ degree nature of the Saturn-ring defect (note: the defect set of a ring about the equator of the inclusion).  The final state has $\Euni^h[s_h,\vNN_h] = 2.98004$ and $\min(s_{h}) \approx 5.026 \times 10^{-2}$. In contrast to our previous experiments using the Ericksen model \cite{Nochetto_JCP2018}, this new simulation is consistent with the physics of liquid crystals \cite{Alama_PRE2016,Gu_PRL2000}.

\begin{center}
\begin{figure}[ht]
\includegraphics[width=0.4\linewidth]{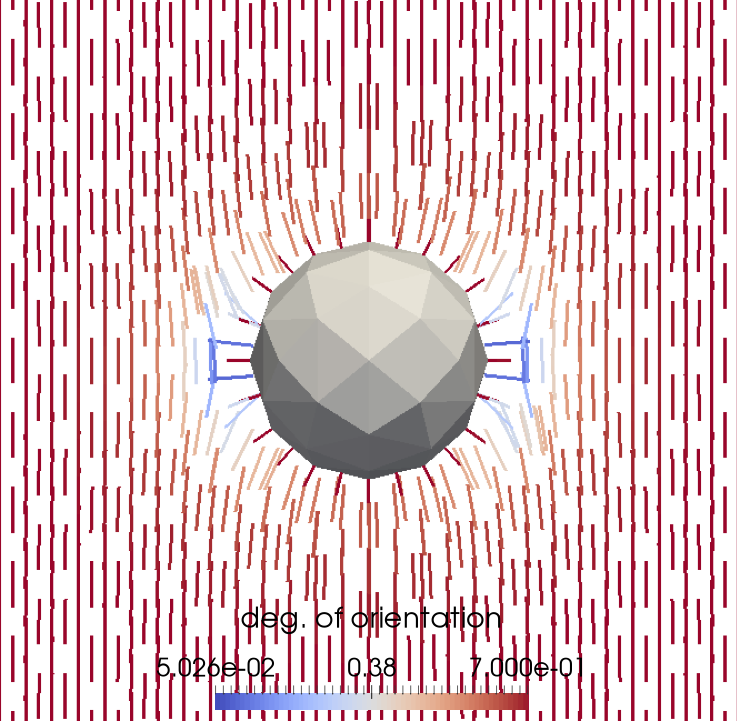} \hspace{0.3cm}
\includegraphics[width=0.4\linewidth]{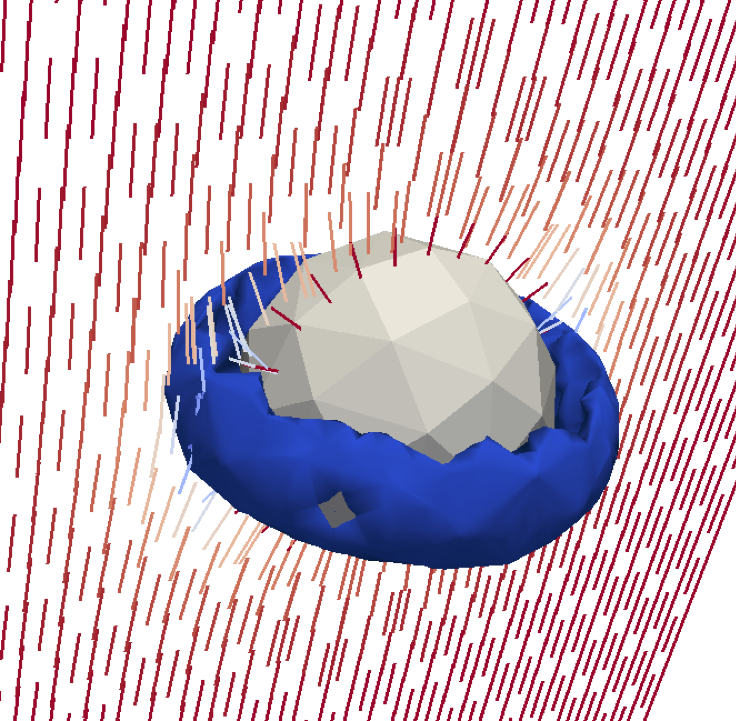}\\
\caption{Saturn-ring defect in 3-D (Section \ref{sec:saturn_ring_defect}). Left: the line field is plotted with color scale based on the degree-of-orientation $s$; the $-1/2$ degree defect is visible on the left and right sides of the spherical inclusion (discretized sphere).  Right: a view of the $s=0.25$ isosurface in blue that contains the ring defect.  The time step for the gradient flow was $\tau = 10^{-3}$. The configuration is symmetric about the vertical axis.  Away from the sphere, the solution is $s = 0.7$ and $\vNN =  (0,0,1) \otimes (0,0,1)$.
}
\label{fig:saturn_ring_defect}
\end{figure}
\end{center}

%---------------------------------------------------------------------------
%---------------------------------------------------------------------------
\section{Conclusions} \label{sec:conclusion}
%---------------------------------------------------------------------------
%---------------------------------------------------------------------------

We introduced a structure-preserving finite element method for a uniaxially-constrained $\vQ$-tensor model of nematic liquid crystals. In such a model, the energy is a degenerate functional of a tensor that must satisfy a rank-one constraint a.e. in the physical domain. We proved the $\Gamma$-convergence of the discrete energies as the mesh size tends to zero and developed an energy-decreasing gradient flow algorithm for the computation of discrete solutions. The numerical experiments show that this method is capable of capturing high-dimensional and non-orientable defect structures.

\subsection*{Acknowledgments}
The authors thank Wenbo Li for pointing out reference \cite{Balan:16} and suggesting an idea for the proof of Lemma \ref{L:Lip-rank1}.

%--------------------------------------------------------------------
% References
%--------------------------------------------------------------------

\bibliographystyle{plain}
\bibliography{MasterBibTeX}

%%%%%%%%%%%%%%%%%%%%%%%%%%%%%%%%%%%%%%%%%%%%%%%%%%%%%%%%%%%%%%%%%%%%%
\end{document}